\newcommand{\showdate}{true}
\DeclareRobustCommand\widecheck[1]{{\mathpalette\@widecheck{#1}}}
\def\@widecheck#1#2{%
    \setbox\z@\hbox{\m@th$#1#2$}%
    \setbox\tw@\hbox{\m@th$#1%
       \widehat{%
          \vrule\@width\z@\@height\ht\z@
          \vrule\@height\z@\@width\wd\z@}$}%
    \dp\tw@-\ht\z@
    \@tempdima\ht\z@ \advance\@tempdima2\ht\tw@ \divide\@tempdima\thr@@
    \setbox\tw@\hbox{%
       \raise\@tempdima\hbox{\scalebox{0.9}[-0.9]{\lower\@tempdima\box
\tw@}}}%
    {\ooalign{\box\tw@ \cr \box\z@}}}
\newcommand{\calz}{\mathcal{Z}}
\newcommand{\cala}{\mathcal{A}}
\newcommand{\calb}{\mathcal{B}}
\newcommand{\calm}{\mathcal{M}}
\newcommand{\calo}{\mathcal{O}}
\newcommand{\caln}{\mathcal{N}}
\newcommand{\cale}{\mathcal{E}}
\newcommand{\calg}{\mathcal{G}}
\newcommand{\calw}{\mathcal{W}}
\newcommand{\calk}{\mathcal{K}}
\newcommand{\calt}{\mathcal{T}}
\newcommand{\calc}{\mathcal{C}}
\providecommand{\texorpdfstring}[2]{#1}
\newcommand{\fkm}{$(4n{-}1)$-manifold}
\newcommand{\tkc}{$(n{-}1)$-connected}
\newcommand{\bmp}{Bianchi-Massey tensor}
\newcommand{\trc}[1]{{[#1]}}
\newcommand{\wh}{\widehat}
\DeclareMathOperator{\ann}{Ann}
\newcommand{\ie}{\emph{i.e.} }
\newcommand{\eg}{\emph{e.g.} }
\newcommand{\cf}{\emph{cf.} }
\newcommand{\mq}{q}
\newcommand{\princ}{\mathcal{F}}
\newcommand{\iprinc}{{\hspace*{0pt} \mkern3mu{\overline {\mkern-3mu \princ \mkern1mu}}\mkern-1mu }}
\newcommand{\pol}{P}
\newcommand{\ext}{\Lambda}
\newcommand{\sym}{\textup{Sym}}
\newcommand{\alt}{\textup{Alt}}
\newcommand{\gsym}{\calg}
\newcommand{\agsym}{\slashed\calg}
\newcommand{\grad}{\textup{Grad}}
\newcommand{\agrad}{\textup{Anti}}
\newcommand{\symp}{\mkern1.5mu{\cdot}\mkern1.5mu}
\newcommand{\ksmap}{\psi}
\newcommand{\poltp}{}
\newcommand{\esymp}{}
\newcommand{\kersym}[1]{K[#1]}
\newcommand{\kerasym}[1]{\slashed K[#1]}
\newcommand{\kercyc}[1]{C[#1]}
\newcommand{\bmsp}{\calb}
\newcommand{\bmspad}[2]{\calb^{#1}(#2)}
\newcommand{\bmspa}[1]{\bmspad{*}{#1}}
\newcommand{\bmspam}[1]{\bmspad{m{+}1}{#1}}
\newcommand{\bmspaf}[1]{\bmspad{4n}{#1}}
\newcommand{\bmspakd}[2]{\calb_k^{#1}(#2)}
\newcommand{\bmspak}[1]{\bmspakd{*}{#1}}
\newcommand{\bmspakm}[1]{\bmspakd{m+1}{#1}}
\newcommand{\tripsp}{\kersym{\kc^* \otimes H^*}}
\newcommand{\trip}{\calt}
\newcommand{\dga}{\cala}
\newcommand{\tdga}{\caln}
\newcommand{\edga}{\cale}
\newcommand{\drpl}{\Omega_{\textup{PL}}}
\newcommand{\mnalg}{\calm}
\newcommand{\clalg}{\calz}
\newcommand{\ccalg}{\calc}
\newcommand{\rp}{\alpha}
\newcommand{\prp}{\gamma}
\newcommand{\csq}{c}
\newcommand{\qi}{q}
\newcommand{\kc}{E}
\newcommand{\ke}{e}
\newcommand{\ikc}{{\hspace*{0pt} \mkern4mu{\overline {\mkern-4mu E}}}}
\newcommand{\icsq}{{\hspace*{0pt} \mkern2mu{\overline {\mkern-2mu c \mkern1mu}} \mkern-1mu}}
\newcommand{\pmt}{\widetilde{p}_1}
\newcommand{\xra}{\xrightarrow}
\newcommand{\ogr}{\widetilde{Gr}}
\newcommand{\PP}{\mathbb{P}}
\newcommand{\CP}{\C P}
\newcommand{\an}[1]{\langle #1 \rangle}
\newcommand{\half}{{\textstyle\frac{1}{2}}}
\newcommand{\third}{{\textstyle\frac{1}{3}}}
\newcommand{\twth}{{\textstyle\frac{2}{3}}}
\newcommand{\Z}{\mathbb{Z}}
\newcommand{\Q}{\mathbb{Q}}
\newcommand{\bbq}{\mathbb{Q}}
\newcommand{\R}{\mathbb{R}}
\newcommand{\C}{\mathbb{C}}
\newcommand{\into}{\hookrightarrow}
\newcommand{\Id}{\textup{Id}}
\newcommand{\del}{\partial}
\newcommand{\wt}[1]{\widetilde #1}
\newcommand{\ol}{\overline}
\newcommand{\gen}[1]{\langle#1\rangle}
\newtheorem{thm}{Theorem}[section]
\newtheorem{prop}[thm]{Proposition}
\newtheorem{lem}[thm]{Lemma}
\newtheorem{conj}[thm]{Conjecture}
\newtheorem{cor}[thm]{Corollary}
\theoremstyle{definition}
\newtheorem{defn}[thm]{Definition}
\theoremstyle{remark}
\newtheorem{rmk}[thm]{Remark}
\newtheorem*{rmk*}{Remark}
\newtheorem{ex}[thm]{Example}
\setlist{leftmargin=*}
\title[The rational homotopy type of $(n{-}1)$-connected manifolds]{The rational homotopy type of $(n{-}1)$-connected manifolds of dimension up to $5n{-}3$}
\author[D. Crowley]{Diarmuid Crowley}
\address{School of Mathematics and Statistics,
University of Melbourne,
Parkville, VIC, 3010, Australia}
\email{dcrowley@unimelb.edu.au}
\author[J. Nordström]{Johannes Nordström}
\address{Department of Mathematical Sciences,
University of Bath,
Bath BA2 7AY, UK}
\email{j.nordstrom@bath.ac.uk}
\begin{document}

\begin{abstract}
We define the \bmp{} of a topological space $X$ to be a linear
map $\calb \to H^*(X)$, where $\calb$ is a subquotient of $H^*(X)^{\otimes 4}$
determined by the algebra $H^*(X)$. 
We then prove that if $M$ is a closed %oriented
\tkc{} manifold of dimension at most $5n{-}3$ (and $n \geq 2$) then its
rational homotopy type is determined by its cohomology algebra and \bmp,
and that $M$ is formal if and only if the \bmp{} vanishes.

We use the Bianchi-Massey tensor to show that there are many \tkc{} \fkm s 
that are not formal but have no non-zero Massey products, and to present a
classification of simply-connected $7$-manifolds up to finite ambiguity.
\end{abstract}

\ifthenelse{\boolean{\showdate}}{\date{\today}}{}
\maketitle

%\vspace{-9.5mm}
\ifthenelse{\boolean{\showdate}}{\vspace{-0.8\baselineskip}}{}
\vspace{-\baselineskip}

\section{Introduction}

This paper is concerned with the rational homotopy theory of closed %oriented 
\tkc{} manifolds of dimension up to $5n{-}3$ for $n \geq 2$.
A continuous map $f : X \to Y$ is a rational homotopy equivalence
if the induced maps $f_* : \pi_k(X) \otimes \Q \to \pi_k(Y) \otimes \Q$ are
isomorphisms. If the spaces are simply-connected then this condition is
equivalent to $f^* : H^*(Y) \to H^*(X)$ being an isomorphism of the cohomology
algebras (throughout the paper, we use cohomology with rational coefficients
unless explicitly stated otherwise).
A further fundamental rational homotopy invariant is the
Massey product structure on $H^*(X)$. In particular, 
non-trivial Massey products are an obstruction to $X$ being \emph{formal} in
the sense of Sullivan \cite{sullivan77} (see \S \ref{subsec:models}).

Miller \cite{miller79} proved that, for $n \geq 2$, any closed \tkc{} manifold
of dimension $\leq 4n{-}2$ is formal. On the other hand,
it was well known that there are examples of non-formal
closed \tkc{} manifolds of dimension $\geq 4n{-}1$
\cite{kreck91a, dranishnikov05}.
Closed \tkc{} \fkm s therefore represents a borderline situation, the
simplest non-trivial case from the point of view of rational homotopy.
In this paper we are concerned with closed \tkc{} manifolds whose dimensions
range from $4n{-}1$ to $5n{-}3$.
In this range the only possible non-trivial Massey products are triple
products, but these do not in general suffice for determining the rational
homotopy type.

The starting point of this paper is the definition
of what we term the \emph{\bmp}.
It is similar in style to the definition of Massey triple products,
but unlike Massey products it is completely independent of auxiliary choices.
The \bmp{} captures precisely the information needed to determine the rational
homotopy type of a closed \tkc{} manifold $M$ of dimension $m \leq 5n{-}3$, and
in particular it is a \emph{complete} obstruction to formality of such
manifolds.
Moreover, the \bmp{} can be computed directly from
the cohomology ring of a coboundary $W$ for $M$ such that
the restriction map $H^*(W) \to H^*(M)$ is surjective in degree
$\leq m{+}1{-}3n$.
This makes the determination of the rational homotopy type tractable
for many examples.

Throughout the paper, all manifolds will be assumed to be connected.
All graded algebras and rings will also be assumed to be connected, in the
sense that the degree 0 part has rank 1, and moreover graded commutative.

\subsection{The \bmp}

We will define the \bmp{} on the cohomology of a differential
graded algebra.
Let us first summarise some notation for various spaces of tensors,
set up in further detail in~\S \ref{subsec:quad}.
For a graded vector space $V$
let $\calg^k V$ denote the quotient of $V^{\otimes k}$ by relations of
graded symmetry, \eg if $V$ is concentrated in odd degree then $\calg^k V$ is
the $k$th exterior power $\ext^k V$, while if $V$ is concentrated in even
degree then $\calg^k V = \pol^k V$, the space of homogeneous degree $k$
polynomials. $\calg^k V$ is itself a graded vector space.
There is a linear map $\calg^2 \calg^2 V \to \calg^4 V$ given by 
full graded symmetrisation, and
we denote its kernel by $\kersym{\calg^2\calg^2 V}$.

\begin{rmk*}
If $V$ is concentrated in odd degree, then
$\kersym{\calg^2\calg^2 V} = \kersym{\pol^2\ext^2V}$ can be
identified with the subspace of $V^{\otimes 4}$ consisting
of tensors that satisfy the symmetries of the Riemann curvature tensor, in
particular the first Bianchi identity. %, \cf Remark~\ref{rmk:unnatural}.
\end{rmk*}

Given a (graded commutative) graded algebra $H^*$, the product
$H^* \otimes H^* \to H^*$ by definition 
factors through a map
$\calg^2 H^* \to H^*$. We denote its kernel by
$\kc^* \subseteq \calg^2 H^*$, and let
\begin{equation}
\label{eq:bmsp}
\bmspa{H^*} := \calg^2 \kc^* \cap \kersym{\calg^2\calg^2 H^*} .
\end{equation}
When $H^*$ is the cohomology algebra of a topological space or (graded commutative) differential graded algebra $\bullet$, we will use
$\bmspa{\bullet}$ as a short-hand for $\bmspa{H^*(\bullet)}$.

Given a differential graded algebra $(\dga, d)$ over $\Q$,
let $\clalg^k := \ker d \subseteq \dga^k$, the space of closed elements of
degree $k$.
Pick a right inverse $\rp \colon H^*(\dga) \to \clalg^*$ for the projection to
cohomology. This induces a map $\rp^2 \colon \calg^2 H^*(\dga) \to \clalg^*$,
taking exact values precisely on $\kc^*$; there is a degree $-1$ linear map
$\prp : \kc^* \to \dga^{*-1}$ such that $\rp^2(\ke) = d\prp(\ke)$ for
$\ke \in \kc^*$.
Now observe that the map $\calg^2 E^* \to \dga^{*-1}$ induced by
the graded symmetrisation of
\[ \kc^* \otimes \kc^* \to \dga^{*-1},
\; \ke \otimes \ke' \mapsto \prp(\ke)\rp^2(\ke') \]
takes closed values on $\bmspa{\dga} \subseteq \calg^2\kc^*$.
It is easy to see that the induced degree $-1$ map
\begin{equation}
\label{eq:bmp}
\princ : \bmspa{\dga} \to H^{*-1}(\dga)
\end{equation}
is independent of the choice
of $\prp$. It is not as obvious, but nevertheless true, that $\princ$ is also
independent of the choice of $\rp$ (Lemma \ref{lem:principal}).

\begin{defn}
\label{def:bmp}
The \emph{\bmp} of the DGA $(\dga, d)$
is the linear map defined in \eqref{eq:bmp}.
\end{defn}

Any algebra homomorphism $H^* \to H'^*$ maps $\kc^* \to \kc'^*$ and
hence $\bmspa{H^*} \to \bmspa{H'^*}$. In particular,
if $\phi : \dga \to \dga'$ is a DGA homomorphism then the induced map
$\phi_\#$ on cohomology maps %$\kc_n$ to~$\kc'_n$ and thus
$\bmspa{\dga}$ to $\bmspa{\dga'}$, and the \bmp{} is clearly
functorial in the sense that the diagram below commutes:
\vspace{-2mm}
\[ \xymatrix{
\bmspa{\dga} \ar[d]^{\princ} \ar[r]^{\phi_\#}
& \bmspa{\dga'} \ar[d]^{\princ'} \\
H^{*-1}(\dga) \ar[r]^{\phi_\#} & H^{*-1}(\dga')}  \]
%Recall that the DGA homomorphism $\phi$ is a \emph{quasi-isomorphism} if
%$\phi_\# : H^*(\dga) \to H^*(\dga')$ is an algebra isomorphism
The definition of formality therefore immediately implies that the \bmp~of
$\dga$ must be trivial if $\dga$ is formal.

\subsection{Determining the rational homotopy type}

Any simply-connected topological space $X$ has a
rationalisation $(X_\Q, f)$ 
(unique up to homotopy, see \eg \cite[Theorem 9.7]{felix01}), 
which is a simply-connected space $X_\Q$ together with a map $f : X \to X_\Q$ such that
$f_* : \pi_k(X) \otimes \Q \to \pi_k(X_\Q)$ are isomorphisms.
Two spaces $X$ and $X'$ are rationally homotopy equivalent if and only if their
rationalisations are homotopy equivalent.

For any CW complex $X$, we can define the \bmp{}
$\princ_X \colon \bmspa{X} \to H^{*-1}(X)$ in
terms of the algebra $\drpl(X)$ of piecewise linear forms on $X$ (see Sullivan
\cite[\S 7]{sullivan77} or F\'elix--Halperin--Thomas \cite[II 10(c)]{felix01}
for the definition of $\drpl(X)$).
The theorem below identifies the \bmp{} as a complete obstruction to
realising an isomorphism of the cohomology algebras of closed \tkc{} manifolds
of dimension up to $5n{-}3$ by a rational homotopy equivalence. Such obstructions
are studied more generally by Halperin and Stasheff \cite{halperin79}.

Note that if $M$ is \tkc{}, then $\bmspa{M}$ is trivial in degrees
$\leq 4n{-}1$. If $m \leq 5n{-}3$, then by Poincar\'e duality the only non-trivial
part of $H^*(M)$ in degree $\geq 4n{-}2$ is $H^m(M)$, so the only non-trivial
component of $\princ$ is $\bmspam{M} \to H^m(M)$. (However, even without any
connectedness hypothesis it is still the case that the $\bmspam{M} \to H^m(M)$
component of $\princ$ determines $\princ$ completely using Poincar\'e duality,
see Lemma \ref{lem:unitrip_to_bm}.) %Corollary \ref{cor:top}

\begin{thm}
\label{thm:type}
For $n \geq 2$, the rational homotopy type of a closed simply-connected,
rationally \tkc{} manifold $M$ of dimension $m \leq 5n{-}3$ is
determined by its cohomology algebra $H^*(M)$ and the top component of
the \bmp{}, $\princ_M \colon \bmspam{M} \to H^m(M)$.
More precisely, if $M$ and $M'$ are closed \tkc{} $m$-manifolds and
$G \colon H^*(M) \to H^*(M')$ is an isomorphism then there exists a
%rational homotopy equivalence $f : M \to M'$
homotopy equivalence $g \colon M'_\Q \to M_\Q$ of the rationalisations
%(\ie $f_* : \pi_k(M) \otimes \Q \to \pi_k(M') \otimes \Q$ are isomorphisms)
such that $G = g^*$ if and only if the diagram below commutes.
\[ \xymatrix{
\bmspam{M} \ar[d]^{\princ_M} \ar[r]^{G}
& \bmspam{M'} \ar[d]^{\princ_{M\!'}} \\
H^m(M) \ar[r]^{G} & H^m(M')}  \]
\end{thm}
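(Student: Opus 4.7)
The plan is to use Sullivan's theory of minimal models. The forward direction is immediate from the functoriality of $\princ$ noted just before the theorem: if $g$ is a rational homotopy equivalence then $g^* = G$ commutes with $\princ$. The content lies in the reverse direction.

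By Sullivan's theorem, it suffices to construct a DGA isomorphism $\Phi : \mnalg_{M'} \to \mnalg_M$ between the minimal Sullivan models that induces $G$ on cohomology. I would construct $\mnalg_M$ explicitly. Its low-degree generators can be arranged as: in degree $n$, closed generators $V^n = H^n(M)$ via the chosen section $\rp$; in degree $2n-1$, generators spanning $\kc_n$ with differential $\rp^2 : \kc_n \hookrightarrow \esym^2 V^n$; in degree $2n$, closed generators realising $\mathrm{coker}(\csq)$. Through degree $2n$ the model is entirely determined by $H^n(M)$ and the cup product. Further generators in higher degrees (up to $4n-1$) are added to realise $H^{3n-1}(M)$ and $H^{4n-1}(M)$ as required by Poincaré duality and to kill any spurious cohomology arising from products of the lower generators.

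The critical observation is already built into \eqref{eq:bmp}: for $\kc \otimes \kc' \in \bmsp_n(M)$ the cocycle $\Omega(\kc,\kc') := \half\bigl(\rp^2(\kc)\prp(\kc') + \prp(\kc)\rp^2(\kc')\bigr)$ in $\mnalg_M$ has cohomology class $\princ_M(\kc \otimes \kc')$. Hence $\princ_M$ packages exactly the cohomology-level output of the minimal model's secondary structure at degree $4n-1$. Given $G$ compatible with the BM tensors, I would define $\Phi$ by induction on generator degree: on $V^n$ set $\Phi = G|_{H^n}$; on $V^{2n-1}$ and $V^{2n}$ the definition is forced by $G$, and the chain-map property follows because $G$ preserves cup products. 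For generators in degrees $3n-1$ and $4n-1$ the freedom in the extension lies in prescribing the images of certain $(4n-1)$-cocycles, and the compatibility of $G$ with $\princ$ is exactly the condition needed.

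The principal obstacle is the degree $4n-1$ step. Matching the differentials of the higher minimal-model generators under $\Phi$ requires that $\Phi$ identifies each $\Omega(\kc,\kc')$ in $\mnalg_M$ with the corresponding $\Omega'(G(\kc),G(\kc'))$ in $\mnalg_{M'}$ up to exact terms, which is precisely what the hypothesis $\princ_{M'} \circ G = G \circ \princ_M$ provides. One must also verify that no additional higher-order secondary operations contribute obstructions at this or any intermediate degree, which follows from the constraints on the range of degrees in which generators appear for an \tkc{} \fkm{} with $n \geq 2$. Once $\Phi$ is constructed, Sullivan's theorem supplies the rational homotopy equivalence $g : M'_\Q \to M_\Q$ with $g^* = G$.
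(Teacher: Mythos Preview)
Your strategy---build the minimal model explicitly and define an isomorphism $\Phi$ degree by degree---is the same as the paper's, and the forward direction is indeed immediate from functoriality. But the execution of the reverse direction has two genuine gaps.

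First, your description of the minimal model is too narrow. An $(n{-}1)$-connected $(4n{-}1)$-manifold can have cohomology in every degree from $n$ to $3n-1$, so the generating space $V$ has closed generators $C^i \cong H^i(M)$ for every $n \le i \le 2n-1$, not just in degrees $n$ and $2n$; in particular $V^{2n-1} = C^{2n-1} \oplus N^{2n-1}$ with $C^{2n-1} \cong H^{2n-1}(M)$ and $d$ mapping $N^{2n-1}$ isomorphically to representatives of $E_n$. This matters for the second gap.

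Second, and more seriously, the claim that $\Phi$ on $V^{2n-1}$ is ``forced by $G$'' is false. On the $N^{2n-1}$ summand, the chain-map condition $d\Phi(e) = \Phi(de)$ determines $\Phi(e)$ only up to a closed element, i.e.\ up to an arbitrary linear map $\kappa : N^{2n-1} \to C^{2n-1}_{M'}$. This freedom is not a nuisance to be ignored---it is essential. The cohomology $H^{4n-1}(\mnalg^{<2n})$ decomposes as the image of $(\Lambda H^{<2n})^{4n-1} \oplus \bmsp_n(M)$ plus a complementary piece $W \cong E_n \otimes H^{2n}(M)$, and the $\kappa$-freedom is exactly what lets you adjust $\Phi$ so that the partial Poincar\'e classes agree on $W$. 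The BM-tensor hypothesis controls the $\bmsp_n$ piece and the ring structure controls the $(\Lambda H^{<2n})^{4n-1}$ piece, but without exploiting $\kappa$ you cannot match the $W$ piece, and then the extension to higher degrees is obstructed. The paper isolates this as Lemma~\ref{lem:trunc_bmp}(ii), then uses a separate Poincar\'e-duality extension argument (Proposition~\ref{prop:PD}(iii)) to push the isomorphism of $\mnalg^{<2n}$'s up to the full minimal models; your sketch of the higher-degree extension (``Further generators\ldots to kill any spurious cohomology'') needs this PD input made explicit.
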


We deduce Theorem \ref{thm:type} from Corollary \ref{cor:minmodel}, which
characterises the minimal model of $M$ in terms of $H^*(M)$ and the \bmp.
In Corollary \ref{cor:obstruction}, that picture of the minimal model also lets
us understand when $M$ is formal.

\begin{thm}
\label{thm:obstruction}
For $n \geq 2$, a closed \tkc{} $M$ of dimension $m \leq 5n{-}3$ is formal if
and only if its \bmp{} $\princ_M \colon \bmspam{M} \to H^m(M)$ is trivial.
\end{thm}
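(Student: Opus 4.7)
The ``only if'' direction is the functoriality observation already highlighted in the excerpt: if $M$ is formal then $\drpl(M)$ is connected to $(H^*(M),0)$ by a zig-zag of DGA quasi-isomorphisms, so $\princ_M$ agrees with the Bianchi-Massey tensor of $(H^*(M),0)$. For a DGA whose differential is zero we may take the section $\rp \colon H^n \to \clalg^n$ to be the identity, in which case $\rp^2(\ke) = \ke \in H^{2n}$ vanishes for every $\ke \in \kc_n = \ker \csq$; hence $\prp \equiv 0$ is admissible and the map \eqref{eq:bmp} is identically zero.

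For the converse the plan is to invoke Corollary~\ref{cor:minmodel} -- the DGA-level refinement from which Theorem~\ref{thm:type} is deduced -- which characterises the minimal model of a DGA with the cohomology of a \tkc{} \fkm{} entirely in terms of the pair $(H^*, \princ)$. Assuming $\princ_M = 0$, that corollary produces the same minimal model for $\drpl(M)$ as for the formal DGA $(H^*(M),0)$, whose Bianchi-Massey tensor vanishes by the ``only if'' direction applied to itself. Consequently $\drpl(M)$ and $(H^*(M),0)$ share a minimal model and are quasi-isomorphic, which is by definition formality of $M$.

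The main obstacle I foresee is ensuring that Corollary~\ref{cor:minmodel} applies equally well to the purely algebraic DGA $(H^*(M),0)$ as to the forms algebra of an actual manifold. This should be automatic, because the corollary's hypotheses are cohomological in nature -- the relevant $H^*$ is \tkc{} and satisfies Poincar\'e duality in degree $4n-1$, properties shared by both inputs -- but some care is needed in the framing. As a fallback one can instead argue at the manifold level: exhibit for every admissible cohomology ring an explicit formal \tkc{} \fkm{} $M'$ realising it (for instance a connected sum of copies of $\Sph^n \times \Sph^{3n-1}$ together with suitable sphere bundles over $\Sph^{2n}$, according to the ring structure), and then apply Theorem~\ref{thm:type} to the pair $(M,M')$ with $G$ any cohomology isomorphism, using that both Bianchi-Massey tensors vanish.
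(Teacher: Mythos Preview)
Your proposal is correct and follows essentially the same route as the paper: Corollary~\ref{cor:obstruction} (the algebraic version of Theorem~\ref{thm:obstruction}) is proved exactly by applying Corollary~\ref{cor:minmodel} with $\dga' := (H^*(\dga),0)$ and $G$ the tautological isomorphism, after noting that $(H^*(\dga),0)$ has trivial \bmp. Your worry about whether Corollary~\ref{cor:minmodel} applies to the DGA $(H^*(M),0)$ is unfounded: since $M$ is \tkc, $H^k(M) = 0$ for $1 \leq k \leq n-1$, so $(H^*(M),0)$ is an \tkc{} $(4n{-}1)$-dimensional Poincar\'e DGA on the nose, and the fallback construction is unnecessary.
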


In the special case of simply-connected 7-manifolds whose %with trivial cup
product ${H^2(M) \times H^2(M) \to H^4(M)}$ is trivial, this was already
argued by Muñoz and Tralle \cite[Theorem 14]{munoz14}.
More generally, on the other hand, Kadeishvili \cite{kadeishvili80} proved that
one can define an $A_\infty$-algebra structure on the cohomology of any
topological space, whose equivalence class determines the rational homotopy
type of the space.
In particular, by \cite[Proposition 7]{vallette12} 
the space is formal if the auxiliary choices in
the definition can be made so that all the higher-order products vanish 
(a precise interpretation of the slogan that ``a space is formal if and only
if the Massey products vanish uniformly''.)

One perspective on the \bmp{} is that it identifies the components of
the \mbox{$A_\infty$-structure} that are significant in the context of \tkc{}
manifolds of dimension $\leq 5n{-}3$, see \S \ref{subsec:ainfty}.
Discarding the components that depend on choices is useful for understanding
examples, and in the applications discussed below.

\begin{rmk}
For manifolds that are not simply-connected, the \bmp{} is still
a well-defined invariant, but the relation between the rational homotopy type
and the minimal model is less straight-forward. When we say we consider \tkc{}
manifolds, we will therefore always require $n \geq 2$.
\end{rmk}
%%%%%%%%%%%%%%%%%%%%%%%%%%%%%%%%%%%%%%%%%%%%%%%%%%%%%%%%%%%%%%%%%%%%%%%%%%%%%%%%%

\subsection{Realisation}
%%%%%%%%%%%%%%%%%%%%%%%%%%%%%%%%%%%%%%%%%%%%%%%%%%%%%%%%%%%%%%%%%%%%%%%%%%%%%%%%%
We next turn to the question of realisation of invariants of the above type.
In \S \ref{subsec:realise} we apply a modification of Sullivan's methods
for the realisation of rational models by closed simply-connected manifolds
\cite[Theorem 13.2]{sullivan77}
to obtain 
Theorem~\ref{thm:realisation} below on realisation by \tkc{} manifolds.
The proof of Theorem \ref{thm:realisation} relies on work Su~\cite{su09}.
%the following result on realisation by \tkc{} manifolds.

Note that when the dimension is divisible by 4 there are additional
\emph{a priori} necessary conditions. If $H^*$ is a $4k$-dimensional Poincar\'e
duality algebra then $H^{2k}$ inherits the structure of a non-degenerate
quadratic form over $\Q$ once we choose a generator $\alpha \in (H^{4k})^\vee$.
To realise the algebra as the rational cohomology of a closed oriented
manifold, it is clearly necessary that we can choose $\alpha$ so that $H^{2k}$
is isometric over $\Q$ to a non-singular integral form (equivalently to a
diagonal form $\Sigma \pm \! x_i^2$).
If we also want to prescribe rational Pontrjagin classes,
then we must be able to choose $\alpha$ so that in addition the associated
Pontrjagin numbers are integers which satisfy Hirzebruch's signature theorem
and which are the Pontrjagin numbers of some closed \tkc{} manifold.
%Finally, the Hirzebruch signature theorem and
%Atiyah-Singer index theorem give further constraints on the Pontrjagin numbers
%(\cf~Stong~\cite[Theorem 3]{stong65}). Following Sullivan, our statement does
%not prescribe the top Pontrjagin class, which would be determined from the
%other Pontrjagin classes and the signature once $\alpha$ has been chosen.
 
\begin{thm}
\label{thm:realisation}
Let $n \geq 2$ and $m \leq 5n-2$.
Let $H^*$ be an $m$-dimensional rational Poincar\'e duality algebra that is
\tkc, \ie  $H^0 = \Q$ and $H^k = 0$ for $0 <\! k \!< n$.
Let $p_i \in H^{4i}$ for $4i \leq m$,
and let $F \colon  \bmspam{H^*} \to H^m$ be a linear map.
If $m = 4k$ assume in addition that there is an $\alpha \in (H^{4k})^\vee$ such
that the following hold:
\begin{enumerate}
\item the induced quadratic form on $H^{2k}$ is isometric over $\Q$ to
a sum of squares $\Sigma \pm\!x_i^2$;
\item $(H^*, p_*, \alpha)$ satisfies the signature theorem;
% the signature of $H^{2k}$ vanishes, or
\item 
the numbers $\alpha(p_{i_1} \dots p_{i_r})$ for $\sum_j i_j = k$ are integers 
realised as the Pontrjagin numbers of some closed smooth \tkc{} $4k$-manifold.
\end{enumerate}
Then there is a closed smooth \tkc{} $m$-dimensional manifold $M$ with
rational Pontrjagin classes $p_*(M)$ and \bmp{} $\princ_M$ such that
\[ (H^*(M), p_*(M), \princ_M) \cong (H^*, p_*, F).\]
\end{thm}

Note that the above realisation statement applies in a wider dimension range
than the preceding classification statements. However, for an \tkc{}
Poincar\'e algebra $H^*$ of dimension $m \geq 5n{-}1$ there can be obstructions
to realising a map $\bmspam{H^*} \to H^m$ as the \bmp{} of a rational Poincar\'e space.
%closed manifold.

\begin{ex}
\label{ex:obstr}
%Let $M$ be a closed manifold of dimension $5n-1$ with $H^n(M) \cong \Q^3$
%and $H^{2n}(M) \cong \Q^2$, with bases
%$\{x_1, x_2, x_3\} \subset H^n(M)$ and $\{y_1, y_2\} \subset H^{2n}(M)$ 
%such that $x_1x_3 = y_1$, $x_2x_3 = y_2$ and
%$x_1^2 = x_2^2 = x_3^2 = x_1x_2 = 0$; such manifolds
%exist (and can be \tkc) by Remark \ref{rmk:5nW}.
%Then $(x_1y_1)x_2^2 - (y_1x_2)(x_2x_1) - (x_2y_2)x_1^2 + (x_1y_2)(x_1x_2)$ 
%s a non-zero element of $\bmspad{5n}{M}$, and it is explained in
%Example \ref{ex:obstr_detail} that $\princ$ must vanish on this element.
Let $X$ be a connected rational Poincar\'e space of dimension 
$5n-1$ with $H^n(X) \cong \Q^3$
and $H^{2n}(X) \cong \Q^2$, with bases
$\{x_1, x_2, x_3\} \subset H^n(X)$ and $\{y_1, y_2\} \subset H^{2n}(X)$ 
such that $x_1x_3 = y_1$, $x_2x_3 = y_2$ and
$x_1^2 = x_2^2 = x_3^2 = x_1x_2 = 0$. %; such manifolds
%exist (and can be \tkc) by Remark \ref{rmk:5nW}.
Then $(x_1y_1)x_2^2 - (y_1x_2)(x_2x_1) - (x_2y_2)x_1^2 + (x_1y_2)(x_1x_2)$ 
is a non-zero element of $\bmspad{5n}{X}$, and it is explained in
Example \ref{ex:obstr_detail} that $\princ$ must vanish on this element.
%cf ex:realise_obstr3
\end{ex}

Thus as we increase the dimension of $M$, the rational homotopy type is harder
to describe not only because we need to add data to capture the higher-order
Massey products, but also because the constraints on realising $\princ$
become more opaque. We do not explore these issues further in this paper.

We can also consider the problem of \emph{integral} realisation of \bmp s.
For a torsion-free ring $F^*$, we can define $\bmspa{F^*}$ entirely analogously
to \eqref{eq:bmsp}. When $F^*$ is the free part of $H^*(M;\Z)$, we abbreviate
this to $\bmspa{M;\Z}$.
We henceforth implicitly assume that all manifolds are oriented and
define the ``integral restriction'' $\iprinc_M : \bmspam{M;\Z} \to \Q$ as the
composition $\bmspam{M;\Z} \to \bmspam{M} \to H^m(M) \to \Q$,
where the second map is $\princ_M$ and the third is
integration over the fundamental class. 

\begin{rmk}
Treating the \bmp{} of a closed oriented $m$-manifold $M$ as an element of
$\bmspa{M}^\vee$ is not natural in the context of
Theorem \ref{thm:type}---since the fundamental class is not invariant under
rational homotopy equivalence---but it is in the context of diffeomorphism
classification and/or cohomology with integer coefficients, where $\princ_M$
and $\iprinc_M$ are equivalent.
\end{rmk}

Even under our simplifying connectivity assumptions,
the question of the realisation of integral Poincar\'e duality
rings by simply-connected manifolds is a hard problem.
The questions that motivated our work really concern the lowest-dimensional
non-trivial case, \ie simply-connected 7-manifolds. However, the arguments
say something at least in the more general critical case of \tkc{} \fkm s,
so we state our integral realisation claim in that context.

We focus on the minimal integral data required to support the \bmp{}
of an \tkc{} \fkm{} $M$.
Let $TH^*(M; \Z) \subset H^*(M; \Z)$ be the torsion subgroup,
$FH^*(M; \Z) := H^*(M; \Z)/TH^*(M; \Z)$,
and $FH^{n*} \subset FH^*$ the subring of elements in degree divisible by $n$.
For an \tkc{} \fkm{} we have $FH^{n*} = H^0 \oplus H^n \oplus FH^{2n}$,
and $\bmspaf{FH^{n*}} = \bmspaf{M; \Z}$.

\begin{thm}
\label{thm:realiseM}
Let $F^{n*}$ be a connected torsion-free graded ring concentrated in degrees
$0$, $n$ and~$2n$, and $\iprinc : \bmspaf{F^{n*}} \to \Q$ a homomorphism.
Then there exists some closed \tkc{} %oriented
$M^{4n-1}$ with an isomorphism $FH^{n*}(M; \Z) \cong F^{n*}$
that identifies $\iprinc_M$ with $\iprinc$.

Moreover,
$H^n(M; \Z)$ can be taken to be torsion-free, \ie $H^n(M; \Z) \cong F^n$.
In addition, if $\iprinc_M$ is integer-valued then 
$H^{2n}(M; \Z)$ can be taken to be torsion-free too.
%In addition, we may assume that $H^*(M)$ is concentrated
%in degrees $\ast = 0, n, 3n-1$ and $4n-1$ and the same holds for $H^*(M; \Z)$
%when $n = 2, 4$ or $n$ is odd.
\end{thm}

In \S \ref{sec:coboundaries} we first describe how the \bmp{} of the boundary
of a suitable compact manifold $W$ can be computed in terms of the cohomology
ring of $W$, and then construct
the required $M$ as a boundary of a $4n$-manifold.
See Theorem \ref{thm:realiseMM} for more details on the cohomology ring
of the \fkm{} produced by this argument.

%%%%%%%%%%%%%%%%%%%%%%%%%%%%%%%%%%%%%%%%%%%%%%%%%%%%%%%%%%%%%%%%%%%%%%%%%%%%%%%%%%%%

% (fold)
\subsection{Classification up to finite ambiguity}
\label{subsec:classification_up_to_finite}
%%%%%%%%%%%%%%%%%%%%%%%%%%%%%%%%%%%%%%%%%%%%%%%%%%%%%%%%%%%%%%%%%%%%%%%%%%%%%%%%%%%%
One of the motivations of Sullivan's work on rational homotopy theory is the
principle that the rational homotopy type of a simply-connected manifold together with some
characteristic class and integral data determines the diffeomorphism type up to
finite ambiguity, \eg \cite[Theorem 13.1]{sullivan77} classifies smooth
manifolds up to finite ambiguity in terms of their rational homotopy type,
rational Pontrjagin classes, bounds on torsion and certain integral lattice
invariants. 

Kreck and Triantafillou \cite{kreck91a} work with less than the 
full rational homotopy type and present stronger results,
\eg \cite[Theorem 2.2]{kreck91a}, where less
of the lattice data is required explicitly, or can be replaced by parts of
the integral cohomology ring.
In the first instance, we are not too concerned about how little of the 
integral cohomology ring $H^*(M;\Z)$ one needs to remember; in Proposition
\ref{prop:classification_up_to_finite}
we explain how to deduce the following result.

\begin{thm}
\label{thm:classification_up_to_finite}
For $n \geq 2$ and $m \leq 5n{-}3$, 
the diffeomorphism type of a closed \tkc{} $m$-manifold $M$ 
is determined up to finite ambiguity by its integral cohomology ring
$H^*(M;\Z)$, Pontrjagin classes ${p_k(M) \in H^{4k}(M;\Z)}$ and 
\bmp{} $\iprinc_M : \bmspam{M; \Z} \to \Q$; \ie given
such an $M$, the set of \tkc{} $m$-manifolds $M'$ with a ring isomorphism
$G : H^*(M';\Z) \to H^*(M;\Z)$ such that $G(p_k(M')) = p_k(M)$, and
$G^\# \iprinc_M = \iprinc_{M'}$ contains only finitely many diffeomorphism
classes.
\end{thm}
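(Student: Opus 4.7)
The plan is to reduce the statement to Sullivan's classification up to finite ambiguity \cite[Theorem 13.1]{sullivan77} (or the refinement in \cite[Theorem 2.2]{kreck91a}) by showing that the triple $(H^*(M;\Z), p_*(M), \iprinc_M)$ supplies all of the invariants that Sullivan requires: a rational homotopy type, rational Pontrjagin classes, a lattice in rational cohomology, and bounds on the torsion.

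The first step is to recover the rational homotopy type from the data. Tensoring with $\Q$, the free quotient of $H^*(M;\Z)$ determines the rational cohomology algebra $H^*(M)$. Since $M$ is a closed oriented \fkm, Poincar\'e duality gives a canonical isomorphism $H^{4n-1}(M) \cong \Q$ via integration over the fundamental class, so the integral restriction $\iprinc_M \colon \bmsp_n(M;\Z) \to \Q$ extends $\Q$-linearly to $\princ_M \colon \bmsp_n(M) \to H^{4n-1}(M)$. Theorem~\ref{thm:type} then says that $(H^*(M), \princ_M)$ determines the rational homotopy type of $M$, and this data is determined by our input via the isomorphism $G$ (which induces the necessary map on $\bmsp_n(-;\Z)$). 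The rational Pontrjagin classes are of course determined by $p_*(M) \in H^{4*}(M;\Z)$.

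Next, the integral cohomology ring $H^*(M;\Z)$ supplies both the integral lattice $FH^*(M;\Z) \subseteq H^*(M)$ and its torsion subgroup $TH^*(M;\Z)$. In particular, the orders of the torsion subgroups are bounded by the input data. This is precisely the remaining data needed for Sullivan's theorem, which then yields that the diffeomorphism classes of closed smooth simply-connected manifolds with the prescribed rational homotopy type, rational Pontrjagin classes, integral cohomology lattice, and torsion bounds form a finite set. Since $M$ is \tkc{} with $n \geq 2$, it is simply-connected and Sullivan's hypotheses are satisfied; restricting to those representatives in the finite set that are themselves \tkc{} \fkm s gives the desired finite ambiguity.

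The main obstacle is really bookkeeping rather than substantive: one must verify that the data recovered from $(H^*(M;\Z), p_*(M), \iprinc_M)$ matches the hypotheses of whichever finite-ambiguity classification one invokes, and that the isomorphism $G$ at the level of integral cohomology induces a compatible isomorphism of the Bianchi-Massey data $\princ_M$. The latter is immediate from the functoriality of $\princ$ established just after Definition~\ref{def:bmp} and the way $\bmsp_n(-;\Z)$ sits inside $\bmsp_n(-)$, so no additional work beyond the foregoing theorems is required. The deduction will be recorded as Corollary~\ref{cor:alpha}, which organises this reduction.
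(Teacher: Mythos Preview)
Your argument is correct in outline but takes a slightly different route from the paper. You invoke Sullivan's \cite[Theorem 13.1]{sullivan77} and feed it the full rational homotopy type, which you recover from $(H^*(M),\princ_M)$ via Theorem~\ref{thm:type}. The paper instead invokes the Kreck--Triantafillou refinement \cite[Theorem 2.2]{kreck91a}, whose inputs are the truncated integral cohomology ring $\bigoplus_{i\leq 2n+1} H^i(M;\Z)$, the real Pontrjagin classes, and the invariant $\alpha_M \in H^{4n-1}(\mnalg^{<2n})^*$; the substantive step is then Corollary~\ref{cor:alpha} (an immediate consequence of Lemma~\ref{lem:trunc_bmp}), which shows that $\alpha_M$ is determined by $H^*(M)$ and $\princ_M$. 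So where you pass through the full minimal model via Theorem~\ref{thm:type}, the paper works directly with the truncation $\mnalg^{<2n}$ and the partial Poincar\'e class on it.

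Both approaches are valid, but the paper's is somewhat sharper in its citation: Kreck--Triantafillou's hypotheses explicitly accept the integral cohomology ring as input, whereas Sullivan's ``certain integral lattice invariants'' are not spelled out in the paper, and the claim that $H^*(M;\Z)$ supplies them deserves a reference or a word of justification. Your closing sentence is also slightly off: Corollary~\ref{cor:alpha} is the statement that $\alpha_M$ is determined by the \bmp, i.e.\ it is the paper's key step rather than a record of your Sullivan-based reduction.
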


For simply-connected $7$-manifolds, we can simplify the invariants from
Theorem \ref{thm:classification_up_to_finite} to align them more closely
with the realisation statement Theorem \ref{thm:realiseM}.
In this case Poincar\'{e} duality means that $H^*(M)$ is determined up to
isomorphism by the rational cup square $\pol^2 H^2(M) \to H^4(M)$.
Hence given a bound on the size of $TH^*(M; \Z)$, the full cohomology ring
$H^*(M; \Z)$ is determined up to finite ambiguity by the truncated ring
$FH^{2*}(M;\Z) = H^0(M;\Z) \oplus H^2(M; \Z) \oplus FH^4(M; \Z)$. 
Similarly $p_1(M) \in H^4(M; \Z)$ is determined up to finite ambiguity by its
image $\pmt(M) \in FH^4(M; \Z)$, and
by Theorem \ref{thm:classification_up_to_finite} we have

\begin{cor}
\label{cor:1c7m_up_to_finite}
For all $N \geq 0$,
closed $1$-connected $7$-manifolds $M$ with $|TH^*(M; \Z)| < N$
are classified up to finite ambiguity by $FH^{2*}(M;\Z)$,
% the truncated free cohomology ring
%$H^0(M;\Z) \oplus H^2(M; \Z) \oplus FH^4(M; \Z)$,
$\pmt(M) \in FH^4(M; \Z)$ and
the \bmp{} $\iprinc_M \colon \bmspad{8}{M;\Z} \to \Q$.
\end{cor}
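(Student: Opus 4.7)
The plan is to deduce the corollary from Theorem \ref{thm:classification_up_to_finite} by showing that, for a closed $1$-connected $7$-manifold $M$ with $|TH^*(M;\Z)| < N$, the integral tuple $(H^*(M;\Z), p_1(M), \iprinc_M)$ appearing in that theorem is determined up to finite ambiguity by the torsion-free tuple $(\icsqmt_M, \pmt(M), \iprinc_M)$. Since Theorem \ref{thm:classification_up_to_finite} produces only finitely many diffeomorphism types of $M$ for each integral tuple, this will suffice.

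First I would pin down the underlying graded abelian group $H^*(M;\Z)$. For $M$ closed, $1$-connected and of dimension $7$, the Hurewicz theorem, universal coefficients and Poincar\'e duality give $H^0 = \Z = H^7$, $H^1 = 0 = H^6$ and $H^2$ free, while the ranks of $FH^3$ and $FH^5$ are forced by rational Poincar\'e duality from $H^2$ and $FH^4$ (the latter being the target of $\icsqmt_M$). The torsion subgroups $TH^3$, $TH^4$ and $TH^5$ are related to one another by the torsion linking form and have combined order less than $N$; since there are only finitely many finite abelian groups of order at most $N$, only finitely many graded isomorphism types of $H^*(M;\Z)$ are compatible with $\icsqmt_M$ and the torsion bound.

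Next I would upgrade to the ring structure. Because $H^6 = 0$, the only nontrivial cup products are $H^2 \otimes H^2 \to H^4$, $H^2 \otimes H^3 \to H^5$, $H^2 \otimes H^5 \to H^7 = \Z$ and $H^3 \otimes H^4 \to H^7 = \Z$; the last two are the integral Poincar\'e duality pairings, fixed once $H^*(M;\Z)$ and the orientation are fixed. The torsion-free parts of the first two are determined by $\icsqmt_M$: by definition for $H^2 \otimes H^2 \to FH^4$, and for $H^2 \otimes FH^3 \to FH^5$ by associativity combined with the duality pairing, via $\langle \icsqmt_M(a,b), c\rangle = \langle a, b \cup c\rangle$ for $a,b \in H^2$ and $c \in FH^3$. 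The remaining ambiguity in these products consists of lifts through the torsion, lying in finite groups such as $\mathrm{Hom}(\pol^2 H^2, TH^4)$ and $\mathrm{Hom}(H^2 \otimes TH^3, TH^5)$, which are finite once the rank of $H^2$ is fixed by $\icsqmt_M$ and the torsion bound is imposed. Similarly, $p_1(M)$ is one of $|TH^4| < N$ lifts of $\pmt(M) \in FH^4$. Combined with the previous step, the integral tuple takes only finitely many values compatible with $(\icsqmt_M, \pmt(M), \iprinc_M)$ and the torsion bound, and applying Theorem \ref{thm:classification_up_to_finite} to a representative of each finishes the argument. The main obstacle is this ring-theoretic step: one must verify that all mixed products of free and torsion classes really are pinned down up to finite ambiguity by $\icsqmt_M$, the torsion subgroups and Poincar\'e duality, rather than leaving additional structural freedom uncontrolled by the torsion bound.
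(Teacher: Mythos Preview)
Your argument is correct and follows the same route as the paper's: the paragraph preceding the corollary deduces it from Theorem \ref{thm:classification_up_to_finite} by exactly the two observations you make, that (a) under the torsion bound the full integral ring $H^*(M;\Z)$ is determined up to finite ambiguity by $\icsqmt_M$, and (b) $p_1(M)$ is determined up to finite ambiguity by $\pmt(M)$. You have simply unpacked step (a) in more detail than the paper does---the paper just asserts it after noting that rational Poincar\'e duality reconstructs $H^*(M;\Q)$ from $\csq_M$. Your concern about mixed free/torsion products is not a real obstacle: any such product lands in a torsion group and has finitely generated source, so takes only finitely many values once the graded group is fixed.
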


In \S \ref{subsec:7mfds} we build on Theorem \ref{thm:realiseM} to also
study which $\pmt$ are realised. 
Proposition \ref{prop:1c7mFA} gives a satisfactory understanding of which
integral invariants are realised by simply-connected spin 7-manifolds, and
we then discuss directions for a complete classification of
such manifolds.

%
% finite ambiguity (end)
%%%%%%%%%%%%%%%%%%%%%%%%%%%%%%%%%%%%%%%%%%%%%%%%%%%%%%%%%%%%%%%%%%%%%%%%%%%%%%%%%%%%

\subsection{Non-formal manifolds with only trivial Massey products}

We will elaborate on the relationship between the \bmp{} 
and the Massey triple products of elements of $H^*(M)$ in
\S \ref{subsec:ordinary}. To simplify the notation, let us limit the present
summary to the critical case of \tkc{} \fkm s, where the triple products only
involve classes in $H^n(M)$.
The triple product $\gen{x,z,y}$ of $x, y, z \in H^n(M)$ is defined if and only
if $xz = yz = 0 \in H^{2n}(M)$. 
Considered as an element of $H^{3n-1}(M)$, the triple product in general
depends on auxiliary choices of cocycle representatives, but
$\gen{x, z, y}$ is well-defined considered as an element of the quotient space
$H^{3n-1}(M)/(xH^{2n-1}(M) + y H^{2n-1}(M))$.
Therefore if $w \in H^n(M)$ has $xw = yw= 0$ then we get a well-defined
\begin{equation}
\label{eq:ord_tensor}
\gen{x,z,y}w \in H^{4n-1}(M) \cong \bbq .
\end{equation}
If the cup product $\csq : \gsym^2 H^n(M) \to H^{2n}(M)$ is trivial then
\eqref{eq:ord_tensor} defines an element $\mq \in (H^n(M)^\vee)^{\otimes 4}$.
It is graded anti-symmetric under swapping $x \leftrightarrow y$ or
$z \leftrightarrow w$, symmetric under swapping both $x \leftrightarrow z$
and $y \leftrightarrow w$, and also satisfies the Bianchi identity.
In \S \ref{subsec:quad}, we explain that the space of such tensors
($\kerasym{\sym^2\agrad^2H^n(M)^\vee}$ in the notation there) is naturally dual
to $\kersym{\pol^2\gsym^2H^n(M)}$.
When $\csq$ is trivial, the latter space equals $\bmspaf{M}$, and
$\princ$ and $\mq$ correspond under this duality.
If $\csq$ is not trivial, then the duality can still be used to determine all
defined values of \eqref{eq:ord_tensor} from~$\princ$. In turn, we can use
Poincar\'e duality to determine all Massey triple products
from~\eqref{eq:ord_tensor}.

A basic point of this paper is that the $\princ$ side of this duality is more
useful when $\csq$ is non-trivial: while the \bmp{} still
determines the Massey triple products, the converse is not true in general.
In \S \ref{subsec:nonordinary} we study how the domain $\bmspaf{M}$
of the \bmp{} of an \tkc{} \fkm, and the set of defined Massey triple products,
depend on the kernel $\kc^{2n} \subseteq \gsym^2 H^n(M)$ of~$\csq$. 
Combined with the above realisation results, one finds
that there are many examples of closed \fkm s with non-trivial \bmp{}
but no non-trivial Massey products. The examples that are
simplest to describe have $n = 2k + 1$ odd.

\begin{ex}
Combining Example \ref{ex:rk4} and Theorem \ref{thm:realiseM}, there exists
for each $k \geq 1$ a non-formal $2k$-connected $(8k{+}3)$-manifold
$M$ with %cohomology groups
$H^{2k+1}(M;\Z) \cong \Z^4$ and $H^{4k+2}(M;\Z) \cong \Z^3$
such that $\csq : \ext^2 H^{2k+1}(M;\Z) \to H^{4k+2}(M;\Z)$ is equivalent to
\[ (f^1 \wedge f^2 - f^3 \wedge f^4, \, 
f^1 \wedge f^3 + f^2 \wedge f^4, \, f^1 \wedge f^4 - f^2 \wedge f^3) \]
(where $f^i$ is a basis for $H^{2k+1}(M;\Z)^\vee$)
even though all Massey products of a space with that cohomology
ring are trivial---% (\cf Example \ref{ex:rk4}).
indeed, if $x, y \in H^{2k+1}(M)$ have $x \cup y = 0$ then
$x$ and $y$ are linearly dependent.
\end{ex}

\begin{ex}
Combining Example \ref{ex:rk5} and Theorem \ref{thm:realiseM},
there exists for $k \geq 1$ a non-formal $(2k{-}1)$-connected $(8k{-}1)$-manifold
$M$ with $H^{2k}(M; \Z) \cong \Z^5$ and $H^{4k}(M; \Z) \cong \Z^3$
such that $\csq : \pol^2 H^{2k}(M;\Z) \to H^{4k}(M;\Z)$ is equivalent to
\[ %(x_1, \ldots, x_5) \mapsto
2 \, (x_1x_4 + x_3x_5, \;
 x_2x_5 + x_3x_4, \;
 x_1^2 + x_1x_2 + x_2^2 + x_3^2 + x_4^2 + x_5^2) , \]
(describing a homomorphism $\pol^2 \Z^r \to \Z$ as a homogeneous quadratic
polynomial with even cross terms)
even though all Massey products of a space with that cohomology
ring are trivial.
\end{ex}

\subsection{Intrinsic formality}
%%%%%%%%%%%%%%%%%%%%%%%%%%%%%%%%%%%%%%%%%%%%%%%%%%%%%%%%%%%%%%%%%%%%%%%%%%%%%%%%%%%%
A space $X$ is said to be \emph{intrinsically formal} if any space with
cohomology algebra isomorphic to $H^*(X)$ is rationally homotopy equivalent
to $X$.
In this case the only defined triple Massey
products are ones such as $\gen{x,y,x}$, which are trivial a priori.
It is immediate from Theorem \ref{thm:obstruction} that any
\tkc{} manifold $M$ of dimension $m \leq 5n{-}3$ whose cohomology algebra has
$\bmspam{M} = 0$ is intrinsically formal, while if $\bmspam{M} \not= 0$
then Theorem \ref{thm:realisation} lets us realise $H^*(M)$ as the
cohomology algebra of some non-formal space (or indeed of a closed manifold).

\begin{cor}
\label{cor:intrinsic}
A closed \tkc{} manifold $M$ of dimension $m \leq 5n{-}3$ is intrinsically formal
if and only if $\bmspam{M} = 0$.
\end{cor}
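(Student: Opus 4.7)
The plan is to deduce both implications directly from Theorems \ref{thm:obstruction} and \ref{thm:realisation}, essentially packaging the observations sketched in the paragraph preceding the corollary.

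For the ``if'' direction, I would start from the observation that $\bmsp_n(H^*)$ is constructed from the graded algebra $H^*$ alone, so any space $X$ with $H^*(X) \cong H^*(M)$ automatically satisfies $\bmsp_n(X) = 0$, making its \bmp{} $\princ_X$ vacuously zero. If $X$ happens to be another closed \tkc{} \fkm, Theorem \ref{thm:type} applied to the given cohomology isomorphism then yields a rational homotopy equivalence $X \simeq_\Q M$, because the required compatibility with $\princ$ is tautological. For a general space $X$ with cohomology ring $H^*(X) \cong H^*(M)$, I would appeal to the minimal-model description underlying Corollary \ref{cor:minmodel}, according to which the minimal Sullivan model of any such space is determined up to isomorphism by $H^*(X)$ together with $\princ_X$; every space with cohomology ring $H^*(M)$ therefore shares the same (formal) minimal model as $M$.

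For the ``only if'' direction, I would argue contrapositively. Assuming $\bmsp_n(M) \neq 0$, choose any nonzero linear map $\princ \colon \bmsp_n(M) \to H^{4n-1}(M)$ and invoke Theorem \ref{thm:realisation} (with arbitrary choice of Pontrjagin data, which plays no role here) to obtain a closed \tkc{} \fkm{} $M'$ with an isomorphism $H^*(M') \cong H^*(M)$ under which $\princ_{M'}$ corresponds to $\princ$. By Theorem \ref{thm:obstruction}, $M'$ is \emph{not} formal. On the other hand, intrinsic formality of $M$ forces $M$ itself to be formal: the algebra $H^*(M)$ is the cohomology of some formal space, e.g.\ the spatial realisation of its own formal minimal model, and intrinsic formality makes $M$ rationally equivalent to it. A rational equivalence $M' \simeq_\Q M$ would then transport formality to $M'$, contradicting the construction.

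The only non-routine ingredient I anticipate is the appeal to Corollary \ref{cor:minmodel} in the ``if'' direction when $X$ is not itself assumed to be an \tkc{} \fkm: one cannot apply Theorem \ref{thm:type} directly there, and must instead rely on the explicit structure of the minimal model. Everything else is a clean application of the main theorems together with the rational-homotopy invariance of formality.
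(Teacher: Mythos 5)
Your argument is correct and follows essentially the same route as the paper, which likewise deduces sufficiency from the triviality of the Bianchi--Massey tensor (via Theorem \ref{thm:obstruction} and the minimal-model statement underlying Corollary \ref{cor:minmodel}, needed exactly as you say for spaces that are not themselves manifolds) and necessity from Theorem \ref{thm:realisation} producing a non-formal manifold with the same cohomology algebra. Your write-up merely spells out details the paper leaves implicit, such as transporting formality along a rational equivalence in the contrapositive step.
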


Cavalcanti \cite[Theorem 4]{cavalcanti06} showed that if $M$ is a closed \tkc{}
\fkm{} and there is an element $\varphi \in H^{2n-1}(M)$ such that
$H^n(M) \to H^{3n-1}(M)$, $x \mapsto \varphi x$ is an isomorphism (``$M$ has a
hard Lefschetz property'') then $M$ is formal if $b_n(M) \leq 2$
and its Massey products vanish uniformly if $b_n(M) \leq 3$. As an
illustration of our results we can make the following improvement.

\begin{thm}
\label{thm:lef}
Let $M$ be a closed \tkc{} \fkm. If $b_n(M) \leq 3$ and there is a %element
$\varphi \in H^{2n-1}(M)$ such that $H^n(M) \to H^{3n-1}(M)$,
$x \mapsto \varphi x$ is an isomorphism then $M$ is
intrinsically formal.
\end{thm}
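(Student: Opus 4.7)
My plan is to invoke Corollary~\ref{cor:intrinsic}, which reduces the theorem to the vanishing $\bmsp_n(M) = 0$. Write $V := H^n(M)$, so by definition $\bmsp_n(M) = \pol^2 \kc_n \cap \bmspe(V)$ with $\kc_n := \ker(\csq \colon \esym^2 V \to H^{2n}(M))$. The hard Lefschetz isomorphism $x \mapsto \varphi x$ combined with Poincar\'e duality $H^{3n-1}(M) \cong V^*$ yields a nondegenerate bilinear form $g$ on $V$ given by $g(x,y) := \int_M xy\varphi$; a graded-commutativity check shows $g$ is $\epsilon$-symmetric. The key observation is that $g$ factors through $\csq$: the induced functional $\hat g \colon \esym^2 V \to \bbq$, $\hat g(u) := \int_M \csq(u) \cdot \varphi$, vanishes on $\kc_n = \ker \csq$, so $\kc_n \subseteq \ker \hat g$.

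When $n$ is odd ($\epsilon = -1$), nondegeneracy of the antisymmetric form $g$ forces $b_n$ to be even, and combined with $b_n \leq 3$ this leaves only $b_n \in \{0,2\}$. If $b_n = 2$, then $\ext^2 V$ is one-dimensional; since $g \neq 0$ implies $\hat g \neq 0$, the map $\csq \colon \ext^2 V \to H^{2n}(M)$ must be nonzero, hence injective. Thus $\kc_n = 0$ and $\bmsp_n(M) = 0$.

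When $n$ is even ($\epsilon = +1$), the inclusion $\kc_n \subseteq \ker \hat g$ gives $\pol^2 \kc_n \subseteq \pol^2 \ker \hat g$, reducing the claim to
\[
\pol^2 \ker \hat g \cap \bmspe(V) = 0.
\]
Since $\bmspe(V)$ is by definition the kernel of the multiplication $\mu \colon \pol^2 \pol^2 V \to \pol^4 V$, this intersection equals the kernel of $\mu|_{\pol^2 \ker \hat g}$. For $b_n \leq 2$ a direct basis check confirms that $\mu|_{\pol^2 \ker \hat g}$ is injective.

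The main computational step is $b_n = 3$. Here $\dim \ker \hat g = 5$, so $\dim \pol^2 \ker \hat g = 15 = \dim \pol^4 V$, and injectivity of $\mu|_{\pol^2 \ker \hat g}$ is equivalent to surjectivity, i.e., to the claim that products of $g$-traceless quadratic polynomials span all of $\pol^4 V$. As this concerns the rank of a linear map, it is invariant under extension of scalars, so it suffices to verify it for the standard form $g = x_1^2 + x_2^2 + x_3^2$ on $\bbq^3$. Taking the basis $\{x_1^2-x_2^2,\, x_2^2-x_3^2,\, x_1x_2,\, x_1x_3,\, x_2x_3\}$ of $\ker \hat g$, a short calculation shows that every quartic monomial in $x_1, x_2, x_3$ lies in the span of the fifteen pairwise products, yielding the required surjectivity and completing the proof.
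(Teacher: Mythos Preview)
Your argument is correct and follows the same overall strategy as the paper: reduce via Corollary~\ref{cor:intrinsic} to showing $\bmsp_n(M)=0$, then observe that the hard Lefschetz hypothesis produces a nondegenerate $\epsilon$-symmetric form $g$ whose kernel (as a functional on $\esym^2 V$) contains~$\kc_n$. The paper packages this as Proposition~\ref{prop:lef}.

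The difference lies in how the algebraic core is verified. The paper works on the dual side: it uses the exact sequences~\eqref{eq:dualseq} to recast $\bmspe[\kc]=0$ as surjectivity of the Kulkarni--Nomizu map $q\owedge(-)\colon \egrad^2 V^*\to\mdualsp(V^*)$, and for $n$ even cites Besse \cite[1.119]{besse87} for the Riemannian-geometry fact that this is onto when $\dim V\le 3$ (the statement that Ricci determines Riemann in dimension~$\le 3$). You instead stay on the primal side and show directly that $\mu\colon \pol^2\ker\hat g\to\pol^4 V$ is injective, reducing the $b_n=3$ case by a dimension count to surjectivity and then verifying that products of traceless quadratics span the quartics. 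Your argument is more elementary and self-contained; the paper's is shorter but relies on an external reference and the Kulkarni--Nomizu formalism. In the odd case your observation that $\ker\hat g=0$ (hence $\kc_n=0$) is in fact cleaner than the paper's $q\owedge q\ne 0$ argument.

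One small point of phrasing: your extension-of-scalars step needs the fact that all nondegenerate quadratic forms become equivalent after passing to an algebraically closed field, so strictly speaking the computation with $x_1^2+x_2^2+x_3^2$ should be carried out over $\overline{\bbq}$ (or you should say so), not over~$\bbq$ where nondegenerate forms need not be isometric. This does not affect the validity of the argument.
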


The algebraic claim that this relies on, Proposition \ref{prop:lef}, is
essentially the same as the claim that the Ricci curvature of a manifold of
dimension $\leq 3$ determines the full Riemann curvature tensor.

Closed Kähler manifolds (which covers holonomy groups
$U(n)$ and $SU(n)$) are formal due to the seminal work of Deligne, Griffiths,
Morgan and Sullivan \cite{deligne75}, while Amann and Kapovitch~\cite{amann12}
proved that positive quaternionic-Kähler manifolds (which have holonomy
$Sp(n)Sp(1)$) are always formal.
In contrast, it is an open problem whether Riemannian manifolds with
exceptional holonomy (\ie $G_2$ or $Spin(7)$) need to be formal. 
The hard Lefschetz property holds in particular for closed Riemannian
7-manifolds with holonomy $G_2$, so Theorem \ref{thm:lef} shows that if one
wants to find $G_2$-manifolds that are not formal then one needs to
look for ones with $b_2$ at least 4.
One of our motivations for studying the \bmp{} has been to provide a tool for
testing whether examples of $G_2$-manifolds are formal.

%The rest of this paper is organised as follows.
%Section \ref{sec:massey_products} proves that the \bmp{} is
%well-defined and relates it to usual Massey triple products.
%Section \ref{sec:QHT} contains the rational homotopy theory and 
%in particular the proofs of the rational classification, and classification
%up to finite ambiguity, of \tkc{} \fkm s $M$.
%Section \ref{sec:coboundaries} shows how to compute the \bmp{} of $M$ when $M$
%bounds over $H^n(M)$ and then proves that a large class \bmp s is realised 
%on such boundaries.
%Section \ref{sec:applications} discusses applications of
%the \bmp{}.

\smallskip
\noindent
{\bf Acknowledgements.} The authors thank Manuel Amann, Jim Davis,
Marisa Fern\'andez, Matthias Kreck, José Moreno-Fernández, Vicente Muñoz and Zhixu Su 
for valuable discussions.
We would also like to thank the referee for several helpful comments and corrections.
DC acknowledges the support of the Leibniz Prize of Wolfgang L\"{u}ck, granted
by the Deutsche Forschungsgemeinschaft.
JN thanks the Simons Foundation for its support under the Simons Collaboration
on Special Holonomy in Geometry, Analysis and Physics
(grant \#488631, Johannes Nordström).

\section{Massey products and the Bianchi-Massey tensor}
\label{sec:massey_products}

The main result of this section that will be used in the other parts of the
paper is that the \bmp{} is well-defined, as claimed in the introduction.
We then put the \bmp{} in context by discussing its relationship with various
notions of triple products: classical Massey, $A_\infty$, and a certain
interpretation of uniform Massey triple products.
In particular, we show that for a DGA whose cohomology satisfies
$m$-dimensional Poincar\'e duality, the top degree part of the \bmp{}
determines the rest of the \bmp, as well as the other notions of triple
products.
While this context is hopefully helpful for understanding the \bmp,
it plays a limited role in the rest of the paper.

\subsection{Some trilinear and quadrilinear algebra}
\label{subsec:quad}

Let us set up some notation for various tensor powers of a 
finite dimensional graded vector space $V$ over $\Q$ and its dual $V^\vee$.

As in the introduction, we let $\gsym^k V$ denote the $k$th graded symmetric
power of $V$, \ie the quotient of $V^{\otimes k}$ by relations of graded
commutativity. Dually, we let 
$\grad^k V^\vee \subseteq (V^\vee)^{\otimes k}$ denote the subspace of
graded symmetric tensors. If $V$ is concentrated in even degree then
$\grad^k V^\vee$ is the same as $\sym^k V^\vee$, the space of symmetric
tensors, while if $V$ is concentrated in odd degree it coincides with the
alternating tensors $\alt^k V^\vee$.

Similarly, let $\agrad^k V^\vee \subseteq (V^\vee)^{\otimes k}$ denote the
subspace of graded antisymmetric tensors. By this we mean that under a
transposition of two arguments, the sign change is the opposite of that for a
graded symmetric tensor.
Let $\agsym^k V$ denote the quotient of $V^{\otimes k}$
by the analogous relations of graded anticommutativity.
So if $V$ is concentrated in even degree then $\agsym^k V = \ext^k V$
and $\agrad^k V^\vee = \alt^k V^\vee$, while if $V$ is concentrated in odd
degree then $\agsym^k V = P^k V$ and $\agrad^k V^\vee = \sym^k V^\vee$.
There are natural dualities between $\grad^k V^\vee$
and $\calg^k V$, and between $\agrad^k V^\vee$ and $\agsym^k V$.

\begin{rmk}
\label{rmk:unnatural}
The projections $\grad^k V \to \gsym^k V$ and $\agrad^k V \to \agsym^k V$
are isomorphisms.
%as is for instance $\kersym{\grad^2\grad^2 V} \to \kersym{\gsym^2\gsym^2V}$.
However, that is not true if we replace $V$ by a graded vector space over a
field of arbitrary characteristic or by an abelian group.
Because we will also consider tensors on cohomology groups with integer
coefficients in \S \ref{sec:coboundaries}, we are therefore somewhat fussy
about distinguishing between subspaces and quotients.
\end{rmk}

For any space of tensors where a notion of graded symmetrisation or
antisymmetrisation makes sense, let
\begin{itemize}
\item $\kersym{\bullet}$ denote the kernel of full graded symmetrisation,
\item $\kerasym{\bullet}$ denote the kernel of full graded antisymmetrisation and
\item $\mkern2mu\kercyc{\bullet}$ denote the kernel of graded symmetrisation under
\emph{even} permutations.
\end{itemize}
For example,
$\kersym{\grad^2 V^\vee \otimes V^\vee}$ is the kernel of the symmetrisation
map $\grad^2 V^\vee \otimes V^\vee \to \grad^3 V^\vee$, while
$\kersym{\calg^2 \calg^2 V}$ is the kernel of
$\calg^2\calg^2V \to \calg^4 V$.
Similarly, $\kercyc{(V^\vee)^{\otimes 3}}$ consists of those
$\alpha \in (V^\vee)^{\otimes 3}$ such that
\[ \alpha(x,y,z) + (-1)^{k(i+j)}\alpha(z,x,y) + (-1)^{i(j+k)}\alpha(y, z, x)
= 0 \]
for any $x, y, z \in V$ of degrees $i$, $j$ and $k$ respectively.
Note that
%\begin{itemize}
%\item
$\kercyc{\bullet}$ contains both $\kersym{\bullet}$ and $\kerasym{\bullet}$.
%\item $\kersym{\grad^2 V^\vee \otimes V^\vee}
%= \kercyc{\grad^2 V^\vee \otimes V^\vee}$
%\item
%$\kerasym{\agrad^2 V^\vee \otimes V^\vee}
%= \kercyc{\agrad^2 V^\vee \otimes V^\vee}$.
%\end{itemize}

In the rest of this subsection we study some further features of tensors with
various symmetries, which will be used primarily in
\S\ref{subsec:uniform}--\ref{subsec:ainfty}.
Consider the map
\begin{equation}
\begin{aligned}
\phi :& \, (V^\vee)^{\otimes 3} \to (V^\vee)^{\otimes 3} \\
(\phi \alpha)(x,y,z)
:= & \, (-1)^{jk}\alpha(x,z,y) - (-1)^{ij + jk + ik}\alpha(z,y,x) ,
\end{aligned}
\end{equation}
for $x, y, z$ of degrees $i, j, k$ respectively.
Here are some elementary observations.

\begin{lem}
\label{lem:tri}\hfill
\begin{enumerate}
\item The image of $\phi$ is contained in $\kercyc{(V^\vee)^{\otimes 3}}$,
while $\ker \phi$ consists precisely of those tensors that are graded-invariant
under cyclic permutations.
\item $\third \phi^2$ is a projection onto $\kercyc{(V^\vee)^{\otimes 3}}$.
\item $\phi$ maps $\grad^2 V^\vee \otimes V^\vee$ to
$\agrad^2 V^\vee \otimes V^\vee$ and vice versa, with kernels $\grad^3 V^\vee$
and $\agrad^3 V^\vee$ respectively.
\item $\phi$ maps $\kersym{\grad^2 V^\vee \otimes V^\vee}$ isomorphically to
$\kerasym{\agrad^2 V^\vee \otimes V^\vee}$ and vice versa.
\end{enumerate}
\end{lem}

\begin{proof}\hfill
\begin{enumerate}
\item Trivial.

\item Note that
\[ (\phi^2 \alpha)(x,y,z)
= 2\alpha(x,y,z) - (-1)^{k(i+j)}\alpha(z,x,y) - (-1)^{i(j+k)}\alpha(y,z,x) . \]
Therefore $\phi^2_{|C} = 3\Id$. Meanwhile (i) implies that $\ker \phi$
is a direct complement to $\kercyc{(V^\vee)^{\otimes 3}}$.

\item If $\alpha(y,x,z) = \epsilon(-1)^{ij}\alpha(x,y,z)$
(with $\epsilon = \pm1$), then
\begin{align*}
(\phi\alpha)(y,x,z) &= (-1)^{ik}\alpha(y,z,x) - (-1)^{ij+ik+jk}\alpha(z,x,y) \\
&= \epsilon(-1)^{ik+jk}\alpha(z,y,x) -\epsilon(-1)^{ij+jk}\alpha(x,z,y) =
-\epsilon(-1)^{ij}(\phi\alpha)(x,y,z) .
\end{align*}

\item Follows from (ii) and (iii) and noting that
$\kersym{\grad^2 V^\vee \otimes V^\vee}
= \kercyc{\grad^2 V^\vee \otimes V^\vee}$
and
$\kerasym{\agrad^2 V^\vee \otimes V^\vee}
= \kercyc{\agrad^2 V^\vee \otimes V^\vee}$.
\qedhere
\end{enumerate}
\end{proof}

We can consider $\grad^2 \grad^2 V^\vee$ as the subspace of
$(V^\vee)^{\otimes 4}$ consisting of quadrilinear functions
$\alpha(x,y,z,w)$ that are graded symmetric under swapping
$x \leftrightarrow y$ or $z \leftrightarrow w$,
and under swapping both $x \leftrightarrow z$ and $y \leftrightarrow w$.
Note that
\begin{align*}
(\grad^2V^\vee \otimes (V^\vee)^{\otimes 2}) \; \cap \;
\grad^2 ((V^\vee)^{\otimes 2}) &\; = \; \grad^2\grad^2 V^\vee, \\
(\agrad^2V^\vee \otimes (V^\vee)^{\otimes 2}) \; \cap \;
\grad^2 ((V^\vee)^{\otimes 2}) &\; = \; \grad^2\agrad^2 V^\vee .
\end{align*}
Therefore, if we set
\begin{equation}
\ksmap = \phi \otimes \Id : (V^\vee)^{\otimes 4} \to (V^\vee)^{\otimes 4},
\end{equation}
then Lemma \ref{lem:tri} implies that
$\ksmap$ maps $\grad^2 \grad^2 V^\vee$ to $\grad^2 \agrad^2 V^\vee$
and vice versa. The images are precisely $\kerasym{\grad^2 \agrad^2 V^\vee}$
and $\kersym{\grad^2 \grad^2 V^\vee}$ respectively, as can be seen from
Lemma \ref{lem:tri} and that
\begin{align*}
\kersym{\grad^2 \grad^2 V^\vee} &\; = \; \grad^2 \grad^2 V^\vee
\; \cap \; \kersym{\grad^2 V^\vee \otimes V^\vee} \otimes V^\vee, \\
\kerasym{\grad^2 \agrad^2 V^\vee} &\; = \; \grad^2 \agrad^2 V^\vee
\; \cap \; \kersym{\agrad^2 V^\vee \otimes V^\vee} \otimes V^\vee.
\end{align*}
We obtain a pair of naturally dual exact sequences:
\vspace{-\baselineskip}

\begin{equation}
\label{eq:dualseq}
\xymatrix@R-1.5pc{
0 \ar[r] & \grad^4 V^\vee \ar[r] & \grad^2 \grad^2 V^\vee \ar[r]^\ksmap
& \grad^2 \agrad^2 V^\vee \ar[r] & \agrad^4 V^\vee \ar[r] & 0 \\
0 & \calg^4 V \ar[l] & \calg^2 \calg^2 V \ar[l] & \calg^2 \agsym^2 V
\ar[l]_{\phantom{\vee}\ksmap^\vee}  & \hspace{0pt} \agsym^4 V \ar[l] & 0 \ar[l] }
\end{equation}

\begin{rmk}
\label{rmk:defect}
This shows in particular that there is a natural perfect pairing
\[ \kersym{\gsym^2\gsym^2V} \times \kerasym{\grad^2\agrad^2V^\vee} \to \Q, \]
and $\kersym{\gsym^2\gsym^2V}$ and $\kerasym{\grad^2\agrad^2V^\vee}$ 
can both be regarded as measuring a ``symmetry defect''
of elements of $\grad^2 \grad^2 V^\vee$.
\end{rmk}

%\begin{rmk}
%If we take $V$ to be a free $\Z$-module instead of a $\Q$-vector space, then
%these sequences remain exact for $\epsilon = +1$, but when $\epsilon = -1$
%there is some 3-torsion.
%\end{rmk}

\begin{rmk}
\label{rmk:kn}
For $\alpha, \beta \in \grad^2 V^\vee$, we can
define $\alpha \owedge \beta \in \kerasym{\grad^2\agrad^2V^\vee}$ by
\begin{align*}
(\alpha \owedge \beta)(x,y,z,w) \; := \qquad
(-1)^{jk}&
\alpha(x,z)\beta(y,w) - (-1)^{ij+jk+ik}\alpha(z,y)\beta(x,w) \\
-(-1)^{ij+j\ell + k\ell}&\alpha(x,w)\beta(z,y)
+ (-1)^{ij + j\ell + i\ell}\alpha(y,w)\beta(x,z) .
\end{align*}
This induces a linear map
$\grad^2 \grad^2 V^\vee \to \kerasym{\grad^2\agrad^2V^\vee}$, clearly equal to
the restriction of~$\ksmap$.
When $V$ is concentrated in even degree, so that this is a map
$\sym^2 \sym^2 V^\vee \to \kerasym{\sym^2 \alt^2 V^\vee}$,
Besse \cite[Definition 1.110]{besse87} calls $\owedge$ the
\emph{Kulkarni-Nomizu product}.
Some features of this algebra are familiar from the context
of Riemannian geometry, \eg that $\kerasym{\sym^2\alt^2V^\vee}$ has dimension
$\frac{1}{12}(\dim V)^2((\dim V)^2-1)$.
\end{rmk}

\subsection{Well-definedness of the \bmp} 
\label{subsec:bmpdef}

As in the introduction, for a graded ring $H^*$, we let
$\bmspa{H^*} := \kersym{\calg^2 \kc^*} \subseteq \kersym{\gsym^2\gsym^2 H^*}$
where $\kc^*$ the kernel of the product map $\csq \colon \calg^2 H^* \to H^*$,
and abbreviate $\bmspa{H^*(\bullet)}$ to $\bmspa{\bullet}$ when $\bullet$ is
a DGA or a topological space.

In the proof that the \bmp{} is well-defined, we will use the following
notation. For a graded vector space $V^*$, a DGA $\dga$
and linear maps $\rho : V^* \to \dga^{*+r}$, $\sigma : V^* \to \dga^{*+s}$,
let $\rho \symp \sigma : \calg^2 V^* \to \dga$ be the degree $r{+}s$ linear map
induced by the graded commutative (with respect to the grading on $V^*$)
bilinear map
\begin{equation} \label{eq:form_products} \begin{aligned}
V^p \times V^q \to \dga^{p+q+r+s}, \; (v,w) \mapsto & \;
\half \big((-1)^{ps}\rho(v)\sigma(w) + (-1)^{(p+s)q} \rho(w)\sigma(v)\big) \\ 
= & \;
\half \big((-1)^{ps}\rho(v)\sigma(w) + (-1)^{r(p+s)} \sigma(v)\rho(w)\big) .
\end{aligned} \end{equation}
Note that $\rho \symp \sigma = (-1)^{rs} \sigma \symp \rho$, and
the Leibniz rule holds in the sense that
\[ d(\rho \symp \sigma) = (d\rho) \symp \sigma + (-1)^r \rho \symp d\sigma \]
as degree $r{+}s{+}1$ maps $\gsym^2 V \to \dga$.
Further, for $\rho_i$ of degree $r_i$ and $v_i \in V^{q_i}$
\begin{align*}
\big((\rho_1 \symp \rho_2) \symp (\rho_3 \symp \rho_4)\big)
\big((v_1v_2)(v_3v_4)\big)
&= \tfrac{1}{8} \sum_{g \in G} (-1)^{a(g)}\rho_1(v_{g(1)})\rho_2(v_{g(2)})\rho_3(v_{g(3)})\rho_4(v_{g(4)}) \\
&= \tfrac{1}{8} \sum_{g \in G} (-1)^{b(g)}\rho_{g(1)}(v_1)\rho_{g(2)}(v_2)\rho_{g(3)}(v_3)\rho_{g(4)}(v_4),
\end{align*}
where $G \subset S_4$ is the wreath product $S_2 \textrm{ wr } S_2$
(an order 8 subgroup of $S_4$),
and
\begin{align*}
a(g) \; &= \; \sum q_{g(i)} r_j \;+ \sum_{g(i) > g(j)} q_i q_j ,\\
b(g) \; &= \; \sum q_i r_{g(j)} \; + \sum_{g(i) > g(j)} r_i r_j ,
\end{align*}
summing over $1 \leq i < j \leq 4$.
That implies in particular that as long as at most one of the $r_i$ is odd,
\begin{equation}
\label{eq:fullsym}
(\rho_1 \symp \rho_2) \symp (\rho_3 \symp \rho_4)
+ (\rho_1 \symp \rho_3) \symp (\rho_2 \symp \rho_4)
+ (\rho_1 \symp \rho_4) \symp (\rho_2 \symp \rho_3)
\; : \; \gsym^2\gsym^2 V \to \dga
\end{equation}
is in fact fully graded symmetric, \ie it factors through
the graded symmetrisation $\gsym^2 \gsym^2 V \to \gsym^4 V$.

\begin{lem}
\label{lem:principal}
Let $(\dga, d)$ be a DGA, and $\clalg^* := \ker d$.
Choose a right inverse $\rp \colon H^*(\dga) \to \clalg^*$ for the projection
to cohomology, and a linear map $\prp \colon \kc^* \to \dga^{*-1}$ such that
$d\prp = \rp^2$ on $\kc^*$.
Then the linear map
\[ \prp \symp \rp^2 : \calg^2\kc^* \to \dga^{*-1} \]
takes values in $\clalg^{*-1}$ on $\bmspa{\dga} := \bmspa{H^*(\dga)}$,
and the induced map
\[ \princ : \bmsp^*(\dga) \to H^{*-1}(\dga) \]
is independent of the choice of $\prp$ and~$\rp$.
\end{lem}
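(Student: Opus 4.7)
The lemma has two main assertions: that $\prp \symp \rp^2$ takes closed values on $\bmsp_n(\dga) = \bmspe[\kc_n]$, and that the induced cohomology class $\princ$ is independent of the choices. I would address the three aspects in turn. For closedness, apply the symmetric Leibniz rule of \eqref{eq:form_products} to get
\[d(\prp \symp \rp^2) = d\prp \symp \rp^2 + (-1)^{2n-1}\prp \symp d(\rp^2) = \rp^2 \symp \rp^2,\]
using that $\rp$ lands in $\clalg$. The map $\rp^2 \symp \rp^2 \colon \pol^2 \esym^2 H^n(\dga) \to \dga^{4n}$ sends $(h \cdot k) \symp (x \cdot y)$ to $\rp(h)\rp(k)\rp(x)\rp(y)$, which is fully $\epsilon$-symmetric in all four arguments; hence it factors through the full symmetrization $\pol^2 \esym^2 H^n \to \esym^4 H^n$ whose kernel is $\bmspe(H^n)$, and so vanishes on $\bmspe[\kc_n]$.

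For independence from $\prp$: any two valid choices differ by a map $\delta \colon \kc_n \to \clalg^{2n-1}$, since $d(\prp - \prp') = \rp^2 - \rp^2 = 0$ on $\kc_n$. Combining $d\prp = \rp^2$ with $d\delta = 0$, Leibniz gives $\delta(e_1)\rp^2(e_2) = -d(\delta(e_1)\prp(e_2))$ and, after symmetrising in $e_1, e_2$, $\delta \symp \rp^2 = -d(\delta \symp \prp)$ on all of $\pol^2 \kc_n$. Hence the class in $H^{4n-1}(\dga)$ is unchanged.

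For independence from $\rp$: write $\rp' - \rp = d\tau$ for some $\tau \colon H^n \to \dga^{n-1}$, possible because $\rp' - \rp$ maps into $\clalg^n$ and represents the zero class in cohomology. The Leibniz identities $d(\rp\tau) = \epsilon \rp(d\tau)$ and $d(\tau(d\tau)) = (d\tau)^2$ give $\rp'^2 - \rp^2 = 2\rp(d\tau) + (d\tau)^2 = d\sigma$ for $\sigma := 2\epsilon \rp\tau + \tau(d\tau)$; by the previous paragraph, one may take $\prp' := \prp + \sigma$. Expanding
\[\prp' \symp \rp'^2 - \prp \symp \rp^2 = \prp \symp d\sigma + \sigma \symp \rp^2 + \sigma \symp d\sigma,\]
and using $d(\prp \symp \sigma) = \rp^2 \symp \sigma - \prp \symp d\sigma$ to rewrite $\prp \symp d\sigma$ modulo exact as $\sigma \symp \rp^2$, the difference reduces modulo exact to $\sigma \symp (\rp^2 + \rp'^2)$. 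Further substitutions for $\sigma$ and $d\sigma$ split this into products of $\rp, \tau, d\tau$; terms with an unpaired $\tau$ are coboundaries via Leibniz, while the remainder factors through the full $4$-fold $\epsilon$-symmetrization and so vanishes on $\bmspe[\kc_n]$ by the built-in Bianchi identity.

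The main obstacle is this final bookkeeping in the $\rp$-independence step, where one must juggle Leibniz identities with the Bianchi symmetry of $\bmspe$. A conceptually cleaner alternative is to work over the DGA $\dga \otimes \Omega^*_{[0,1]}$ of polynomial de Rham forms on $[0,1]$ valued in $\dga$: the element $\widetilde\rp := (1-t)\rp + t\rp' - \epsilon \tau \, dt$ is a right inverse for the projection to $H^n(\dga \otimes \Omega^*_{[0,1]}) \cong H^n(\dga)$, and a compatible $\widetilde\prp$ yields a \bmp{} in $H^{4n-1}(\dga \otimes \Omega^*_{[0,1]}) \cong H^{4n-1}(\dga)$ whose restrictions at $t = 0, 1$ are the two original $\princ$; since the endpoint evaluations are cohomology isomorphisms, these must agree.
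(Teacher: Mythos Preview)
Your treatment of closedness and of $\prp$-independence is correct and matches the paper's proof essentially verbatim.

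For $\rp$-independence, your reduction of $\prp'\symp\rp'^2-\prp\symp\rp^2$ modulo exact terms to $\sigma\symp(\rp^2+\rp'^2)$ is correct and is effectively where the paper's computation arrives as well. However, your one-sentence description of the remaining bookkeeping is not accurate: after expanding, \emph{every} term carries exactly one factor of $\tau$, so there is no dichotomy between ``terms with an unpaired $\tau$'' and a fully $\epsilon$-symmetric remainder, and Leibniz cannot be used to integrate the lone $\tau$ away (that would require a primitive of $\tau$). For instance $\tau\rp\symp(d\tau)^2$ is neither exact nor fully $\epsilon$-symmetric on its own. What the paper actually does is add one further exact correction $d\bigl(\tfrac{2}{3}\,\tau\rp\symp\tau(d\tau)\bigr)$ and then verify by hand that specific \emph{combinations} of the six resulting terms---e.g.\ $2\rp^2\symp\tau(d\tau)+4\,\tau\rp\symp(d\tau)\rp$, or $\tfrac{8}{3}\,\tau(d\tau)\symp(d\tau)\rp+\tfrac{4}{3}\,\tau\rp\symp(d\tau)^2$---are fully $\epsilon$-symmetric. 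That pairing-up, together with the extra $\tfrac{2}{3}$ correction, is the real content of the computation and is not captured by your sketch.

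Your path-DGA alternative, on the other hand, is correct and is a genuinely different route from the paper's brute-force verification. The key point that makes it work is that the two evaluations $\mathrm{ev}_0,\mathrm{ev}_1\colon\dga\otimes\Omega^*_{[0,1]}\to\dga$ induce the \emph{same} map on cohomology (both being retractions of the constant inclusion), so the single class $\widetilde\princ$ built from $(\widetilde\rp,\widetilde\prp)$ specialises to the $\princ$ for $(\rp,\mathrm{ev}_0\widetilde\prp)$ and for $(\rp',\mathrm{ev}_1\widetilde\prp)$ simultaneously; combined with the already-established $\prp$-independence this finishes the argument. This approach is cleaner and more conceptual than the paper's term-by-term check; the paper's direct computation has the minor compensating advantage of producing an explicit primitive for the difference.
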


\begin{proof}
Note that $d(\prp \symp \rp^2) :  \calg^2 \kc^* \to \dga^*$
is the restriction of
$(\rp^2)^2 : \calg^2 \calg^2 H^*(\dga) \to \dga^*$, which factors
through $\calg^4 H^*(\dga)$.
Therefore $d(\prp \symp \rp^2)$ vanishes on the intersection of $\calg^2\kc^*$
with the kernel of
$\calg^2 \calg^2 H^*(\dga) \to \calg^4 H^*(\dga)$, which
is $\bmspa{\dga}$ by definition.
Hence $\prp \symp \rp^2$ maps $\bmspa{\dga} \to \clalg^{*-1}$ as claimed.

Now consider replacing $\prp$ by $\prp_\bullet = \prp + \eta$, for some
$\eta : \kc^* \to \clalg^{*-1}$.
Then $\prp_\bullet \symp \rp^2 - \prp \symp \rp^2 = \eta \symp \rp^2
= -d(\eta \symp \prp)$ takes exact
values on all of $\calg^2 \kc^*$, so certainly $\princ$ is independent of
the choice of~$\prp$, given the choice of $\rp$.

Now consider replacing $\rp$ by $\rp_\bullet := \rp + d\beta$
for some arbitrary linear map $\beta : H^*(\dga) \to \dga^{*-1}$.
Then
\[ \rp_\bullet^2 - \rp^2 %= \rp(d\beta) + (d\beta)\rp + (d\beta)^2
= d\beta \symp (2\rp + d\beta) \]
on $\calg^2 H^*(\dga)$, so a possible choice of $\prp_\bullet$ such that
$d\prp_\bullet = \rp_\bullet^2$ is
%\[ \prp_\bullet := \prp + (\rp + \half d\beta) \beta + \beta(\rp + \half d\beta) \]
\[ \prp_\bullet := \prp + \beta \symp (2\rp + d\beta) . \]
To make the next equation equation unambiguous, we suppress the $\symp$ in the
products that are to be evaluated first.
On $\calg^2 \kc^*$ we have
\begin{align*}
&\prp_\bullet \symp \rp_\bullet^2 - \prp \symp \rp^2 +
d\big(\prp \symp \beta (2\rp + d\beta)
+ \twth \beta\rp \symp \beta d\beta\big) \\
= \; & \beta (2\rp + d\beta) \symp \rp^2
+ \prp \symp (d\beta)(2\rp+d\beta)
+ \beta (2\rp+d\beta) \symp (d\beta)(2\rp+d\beta) \\
& + d\prp \symp \beta(2\rp+d\beta)
- \prp \symp (d\beta)(2\rp+d\beta)
+ \twth (d\beta) \rp \symp \beta d\beta
- \twth \beta \rp \symp (d\beta)^2 \\
= \; & 2\rp^2 \symp \beta (2\rp+d\beta)
+ \beta (2\rp+d\beta) \symp (d\beta)(2\rp+d\beta)
+ \twth \beta d\beta \symp (d\beta)\rp 
- \twth \beta \alpha \symp (d\beta)^2 \\
= \; & 4\rp^2 \symp \beta\alpha
+ 2\rp^2 \symp \beta d\beta
+ 4\beta \rp \symp (d\beta)\rp 
+ \textstyle \frac{8}{3}\beta d\beta \symp (d\beta)\rp 
+ \textstyle \frac{4}{3} \beta \rp \symp (d\beta)^2
+ \beta d\beta \symp (d\beta)^2 .
\end{align*}
The right hand side factors through a map $\calg^4 H^*(\dga) \to \dga^{*-1}$:
the first term, the sum of the second and third terms, the sum of the fourth
and fifth terms, and the sixth term are each of the form \eqref{eq:fullsym},
and hence fully graded-symmetric.
Thus the restriction of $\prp_\bullet \rp_\bullet^2 - \prp\rp^2$ to
$\bmspa{\dga}$ takes exact values,
so $\princ$ is independent of the choice of $\rp$ too.
\end{proof}

\begin{rmk}
\label{rmk:exactpart}
An alternative way to describe the \bmp{} is to argue that the
(unsymmetrised) map $\prp\rp^2 : \kc^* \otimes \kc^* \to \dga^{*-1}, \;
e \otimes e' \mapsto \prp(e)\rp^2(e')$
induces a well-defined map $\kersym{E^* \otimes E^*} \to H^{*-1}(\dga)$.
But $\agrad^2 \kc^* \subseteq \kersym{E^* \otimes E^*}$, and the restriction
of $\prp\rp^2$ to $\agrad^2 \kc^*$ takes exact values.
Thus the induced map descends to $\kersym{E^* \otimes E^*}/\agrad^2 \kc^*$,
which is naturally isomorphic to $\bmspa{\dga}$.
\end{rmk}

\subsection{Uniform Massey triple products}
\label{subsec:uniform}

As before, let $\dga^*$ be a DGA, and $\clalg^*$ the subspace of closed
elements.
Let $H^*$ denote the cohomology of $\dga^*$, and $\kc^* \subseteq \gsym^2 H^*$
the kernel of the product $\gsym^2 H^* \to H^*$.

%Let $\tripsp := \kersym{\kc^* \otimes H^*}$,
Consider $\tripsp$, \ie the kernel of the full
symmetrisation $\kc^* \otimes H^* \to \calg^3 H^*$.
Given choices of $\alpha : H^* \to \clalg^*$ and
$\gamma : \kc^* \to \dga^{*-1}$ such that $d\gamma = \alpha^2$ as before, we
get an induced map $\gamma \alpha : \kc^* \otimes H^* \to \dga^{*-1}$ whose
restriction to $\tripsp$ takes closed values. We call the resulting map
\enlargethispage{0.9\baselineskip}
\begin{equation}
\label{eq:uniform}
\trip : \tripsp^* \to H^{*-1}
\end{equation}
the \emph{uniform Massey triple product} of $\dga$. Unlike the Bianchi-Massey
tensor, it does depend on the choices of $\alpha$ and $\gamma$, in the
following way.

Given different choices $\alpha', \gamma'$, pick $\beta : H^* \to \dga^{*-1}$
such that $d\beta = \alpha' - \alpha$.
Then $\gamma' - \gamma - \beta(\alpha'+\alpha)$ maps $\kc^* \to \clalg^{*-1}$.
Let $\delta_\beta$ be the induced map $\kc^* \to H^{*-1}$ (which does depend on
the choice of $\beta$, but in a way that is not significant). Now
\[ \trip' - \trip : \tripsp \to H^{*-1} \]
is the restriction of the map
$\delta_\beta \Id : \kc^* \otimes H^* \to H^{*-1},
\; e \otimes x \mapsto \delta_\beta(e)x$ (for any choice of $\beta$).
Thus $\trip$ is well-defined precisely modulo restrictions to $\tripsp$
of maps of the form $\delta \Id : \kc^* \times H^* \to H^{*-1}$
for $\delta : \kc^* \to H^{*-1}$.

Next we prove that $\trip$ is equivalent to $\princ$ for Poincar\'e DGAs.

\begin{defn}
\label{def:poincare}
Let $H^*$ be a finite dimensional graded commutative algebra over $\Q$.
We call $\alpha \in (H^m)^\vee$ a \emph{Poincar\'e class} if
the linear map
\[ \alpha \cap : H^i \to (H^{m-i})^\vee, \; x \mapsto (y \mapsto \alpha(xy)) \]
is an isomorphism for all $i$. We say that $H^*$ is
{\em $m$-dimensional Poincar\'e} if such an $\alpha$ exists.

A DGA $\dga$ is $m$-dimensional Poincar\'e if its cohomology is.
\end{defn}

\begin{lem}
\label{lem:unitrip_to_bm}
\begin{enumerate}
\item For any DGA $\dga$, $\trip$ determines $\princ$.
\item If $\dga$ is $m$-dimensional Poincar\'e, then the top component
$\princ : \bmspam{\dga} \to H^m(\dga)$ determines~$\trip$ (and hence also
the other components of $\princ$).
\end{enumerate}
\end{lem}

\begin{proof}
We can control all the relevant maps in terms of
\[ \mu : \kersym{\kc^* \otimes H^* \otimes H^*} \to H^{*-1} \]
defined by restricting $\kc^* \otimes H^* \otimes H^* \to \dga^{*-1}, \; e \otimes x \otimes y \mapsto \gamma(e)\alpha(x)\alpha(y)$ to
$\kersym{\kc^* \otimes H^* \otimes H^*}$ and observing that the restriction
takes values in $\clalg^{*-1}$. 
$\kersym{\kc \otimes H^* \otimes H^*}$ contains $\tripsp \otimes H^*$, and the
restriction of $\mu$ to that equals $\trip \Id$.
This is determined by $\trip$, and if $\dga$
is $m$-dimensional Poincar\'e then the degree $m$ component of
$\trip \Id$ also determines $\trip$.

The map
$\mu$ factors through the projection
$\kersym{\kc^* \otimes H^* \otimes H^*} \to \kersym{\kc^* \otimes \calg^2 H^*}$.
The restriction to $\kersym{\kc^* \otimes \kc^*}$ in turn factors through
the projection to $\bmspa{H^*}$, where it induces $\princ$
(see Remark~\ref{rmk:exactpart}).

Now observe that the Bianchi identity for elements of
$\kersym{\grad^2 \calg^2 H^*} \subset \kersym{\calg^2 H^* \otimes \calg^2 H^*}$
means that this space is contained in the image of
$\kersym{\calg^2 H^* \otimes H^*} \otimes H^*$. Hence the image of
$\tripsp \otimes H^*$ in $\kersym{\kc^* \otimes \calg^2 H^*}$ contains
$\kersym{\grad^2 \kc^*}$, which maps onto $\bmspa{H^*}$. This proves~(i).

For (ii), we need to prove that if $\princ : \bmspam{\dga} \to H^m(\dga)$
is trivial, then the choices in the definition of $\trip$ can be made so
that $\trip$ vanishes too. It suffices to show that if the restriction
of $\mu$ to the degree $m{+}1$ part of $\kersym{\kc^* \otimes \kc^*}$ vanishes,
then we can make the choices so that $\mu$ vanishes on all of the degree
$m{+}1$ part of $\kersym{\kc^* \otimes \calg^2 H^*}$.
Now let $D^* \subset \calg^2 H^*$ be a direct complement to $\kc^*$. Then
$\kc^* \otimes \calg^2 H^*
= \kc^* \otimes \kc^* \; \oplus \; \kc^* \otimes D^*$.
Let $p : \kc^* \otimes \calg^2 H^* \to \kc^* \otimes D^*$ be the corresponding
projection.

Now, if we change the choice of $\gamma$ to $\gamma' = \gamma + \eta$
for some $\eta : \kc^* \to \clalg^{*-1}$, then the map
$\mu' - \mu : \kersym{\kc^* \otimes \calg^2 H^*} \to H^{*-1}$ is the restriction of
the degree $-1$ map $\kc^* \otimes \calg^2 H^* \to H^{*-1}$ induced by
$[\eta] \times \csq$. The latter vanishes on $\kc^* \otimes \kc^*$,
while by Poincar\'e duality any map from the degree $m{+}1$ part of
$\kc^* \otimes D^*$ to $H^m(\dga)$ can realised this way for some
choice of $\eta$.

Since the kernel of the restriction
$p : \kersym{\kc^* \otimes \calg^2 H^*} \to \kc^* \otimes D^*$ is precisely
$\kersym{\kc^* \otimes \kc^*}$, it follows that by changing the choice of $\gamma$,
we can ensure that $\mu$ is zero on the degree $m+1$ part of
$\kersym{\kc^* \otimes \calg^2 H^*}$ provided that the restriction
to $\kersym{\kc^* \otimes \kc^*}$ vanishes.
\end{proof}

\subsection{Massey triple products}
\label{subsec:ordinary}

Let us recall the definition of Massey triple products.
Let $(\dga, d)$ be a DGA, and $\clalg := \ker d$.
Suppose $x \in H^i(\dga)$, $y \in H^j(\dga)$ and $z \in H^k(\dga)$,
such that $xy = yz = 0$.
Choose representatives $\alpha_x, \alpha_y, \alpha_z \in \clalg^*$.
Then $\alpha_x\alpha_y$ and $\alpha_y\alpha_z$ are exact, say $d\gamma_{xy}$ and
$d\gamma_{yz}$ respectively. Then
$\gamma_{xy} \alpha_z - (-1)^i\alpha_x\gamma_{yz} \in \dga^{i+j+k-1}$
is closed, so represents a class in $H^{i+j+k-1}(\dga)$.
The choices of $\alpha$s and $\gamma$s can change that class by elements
of $xH^{j+k-1}(\dga) + zH^{i+j-1}(\dga)$, but the image
\[ \gen{x, y, z} \; \in \; \frac{H^{i+j+k-1}(\dga)}
{xH^{j+k-1}(\dga) + zH^{i+j-1}(\dga)} \]
is well-defined, and that is the Massey triple product.

Note that $(xy)z - (-1)^{i(j+k)}(yz)x \in \kersym{\kc^* \otimes H^*}$,
and is mapped to $\gen{x, y, z}$ by the uniform triple product $\trip$ from
\eqref{eq:uniform}.
Thus the Massey triple products are determined by $\trip$, as one would expect.
By Lemma \ref{lem:unitrip_to_bm}, the Massey triple products of
a Poincar\'e duality DGA are thus also determined by the top component
of the Bianchi-Massey tensor $\princ$.
However, it may be illuminating to consider how to recover the Massey triple
products from $\princ$ more directly.

If $x, y, z, w \in H^*(\dga)$ of degrees $i, j, k$ and $\ell$ respectively
and $xz = yz = xw = yw = 0 \in H^*(\dga)$, then
\begin{equation}
\label{eq:dga_tensor}
\mq(x,y,z,w) :=
(-1)^{jk} \gen{x,z,y}w \in H^{i+j+k+\ell-1}(\dga) 
\end{equation}
is well-defined. This approach to eliminating the indeterminacy of the triple
products has been exploited by Taylor \cite{taylor10}.

The next elementary lemma lets us recover the defined values
of \eqref{eq:dga_tensor} from the \bmp.
Write the product $H^* \times H^* \to \calg^2 H^*$
as $(x,y) \mapsto x \esymp y$.

\begin{lem}
\label{lem:recover}
If $x,y,z,w \in H^*(\dga)$ such that $xz = xw = yz = yw \in \kc^*$
(\ie the products in $H^*(\dga)$ vanish) then
\begin{equation}
\label{eq:nongen}
v := (x \esymp z) \poltp (y \esymp w) -
(-1)^{jk+j\ell +k\ell}(x \esymp w) \poltp (y \esymp z) \in \bmspa{\dga} ,
\end{equation}
and
\begin{equation}
\label{eq:recover}
\princ(v) = \gen{x,z,y}w .
\end{equation}
\end{lem}

\begin{defn}
\label{def:ordinary}
We call elements of $\bmspa{\dga}$ of the form \eqref{eq:nongen}
\emph{ordinary}.
\end{defn}

\begin{lem}[{\cf Hepworth \cite[Lemma 3.1.4]{hepworth05}}]
\label{lem:syms}
\hfill
\begin{enumerate}
\item
If $x, y, z, w \in H^*(\dga)$ such that $xz = yz = xw = yw = 0 \in H^*(\dga)$,
then
\begin{align*}
\mq(x,y,z,w)\;  &= \; -(-1)^{ij}\mq(y,x,z,w) \\
= -(-1)^{k\ell}\mq(x,y,w,z)\; &= \; (-1)^{(i+j)(k+\ell)} \mq(z,w,x,y)
\end{align*}
\item If in addition $xy = zw = 0 \in H^*(\dga)$ then
\[ \mq(x,y,z,w) + (-1)^{k(i+j)} \mq(z,x,y,w) + (-1)^{i(j+k)}\mq(y,z,x,w) = 0. \]
\end{enumerate}
\end{lem}

If the product $H^*(\dga) \times H^*(\dga) \to H^*(\dga)$ is
trivial in non-zero degrees then $\mq$ induces
a linear map $H^{>0}(\dga)^{\otimes 4} \to H^{*-1}(\dga)$.
Equivalently, if $\alpha \in H^m(\dga)^\vee$, then $\alpha \circ \mq$
is in the degree $m{+}1$ part of $(H^{>0}(\dga)^\vee)^{\otimes 4}$.
Lemma \ref{lem:syms}(i) means that $\alpha \circ \mq$ is graded anti-symmetric
under swapping $x \leftrightarrow y$ or $z \leftrightarrow w$, and also
symmetric under swapping both $x \leftrightarrow z$ and $y \leftrightarrow w$,
so $\alpha \circ \mq \in \grad^2 \agrad^2 H^*(\dga)^\vee$.
Moreover, (ii) says that the Bianchi identity holds, so $\alpha \circ \mq$
in fact belongs to the degree $m{+}1$ part of
$\kerasym{\grad^2\agrad^2 H^{>0}(\dga)^\vee}$.

Now suppose that $\dga$ is $m$-dimensional Poincar\'e (\eg that $\dga$ is the
DGA of piecewise linear forms on a closed oriented $m$-manifold $M$).
Then for $x \in H^i(\dga)$, $y \in H^j(\dga)$ the
annihilator of
$xH^{j+k-1}(\dga) + yH^{i+k-1}(\dga) \subseteq H^{i+j+k-1}(\dga)$
is precisely
\[ \{ w \in H^{m+1-i-j-k}(\dga) : xw = yw = 0 \in H^*(\dga) \} . \]
Hence for $x \in H^i(\dga)$, $y \in H^j(\dga)$, $z \in H^k(\dga)$ such that
$xz = yz = 0$ the triple product $\gen{x,z,y}$ is completely
determined by the values of $\mq(x,y,z,w) \in H^m(\dga) \cong \Q$ for
$w \in H^{m+1-i-j-k}(\dga)$ such that $xw = yw = 0$, and hence by the \bmp.

On the other hand, suppose that $N \subset H^*(\dga)$ is a subspace
such that the product $N \times N \to H^*(\dga)$ is trivial
(so $\gsym^2 N \subseteq E$) and that $\bmspam{\dga}$ is the degree $m{+}1$ part
of $\kersym{\gsym^2\gsym^2 N}$;
\eg if $\dga$ is \tkc{} with $H^n(\dga) \times H^n(\dga) \to H^{2n}(\dga)$
trivial and $m = 4n{-}1$ then we could take $N = H^n(\dga)$.
Then \eqref{eq:recover} means that $\alpha \circ \princ$ can be recovered from
$\alpha \circ \mq$ using the duality
$\kerasym{\grad^2\agrad^2 N^\vee} \cong \kersym{\gsym^2\gsym^2 N}^\vee$
from Remark \ref{rmk:defect}.
In particular, if $\dga$ is in addition $m$-dimensional Poincar\'e, then
$q$ determines the top component of $\princ$, and hence the rest of
$\princ$ too. Put differently, the surjectivity of the map
$\ksmap^\vee : \calg^2 \agsym^2 N \to \kersym{\gsym^2\gsym^2 N}$
implies that in this situation $\bmspam{\dga}$ is spanned by ordinary elements;
meanwhile \eqref{eq:recover} implies that the top component of $\princ$ can be
recovered from the Massey triple products whenever $\bmspam{\dga}$ is generated
by ordinary elements.

We will see in \S \ref{subsec:nonordinary} that when the product structure of
$H^*(\dga)$ is non-trivial, then it is often \emph{not} the case that
$\bmspam{\dga}$ is generated by ordinary elements. Then $\princ$ is not
determined by Massey triple products.

%\subsection{Relationship with $A_\infty$-structures}
\subsection{Relationship with
\texorpdfstring{$A_\infty$}{A-infinity}-structures}
\label{subsec:ainfty}

For any DGA $\dga$, one may define an $A_\infty$-structure on $H^*(\dga)$,
which consists of a sequence of linear maps
$\mu_k \colon H^*(\dga)^{\otimes k} \to H^*(\dga)$ of degree $2{-}k$, for
$k \geq 2$, see \eg Amann \cite[\S 8.5]{amann15} or Vallette \cite{vallette12}.
$\mu_2$ is simply the product on the cohomology algebra.
%They are related to the ordinary Massey products, \eg if
%the triple product $\gen{x, z, y}$ is defined then
%\[ \mu_3(x,z,y) = \gen{x,z,y} \mod xH^i(\dga) + yH^j(\dga) . \]
The definition of the higher products relies on arbitrary choices, but the
structure is well-defined up to a suitable notion of $A_\infty$-isomorphism.
Moreover, $H^*(\dga)$ with this $A_\infty$-structure is quasi-isomorphic to 
$\dga$ itself, so in particular determines the homotopy type of $\dga$.
There is also a notion of homotopy equivalence of $A_\infty$ algebras, and two
simply-connected spaces are rationally homotopy equivalent if and only if their
cohomology rings are equivalent in
the sense of Kadeishvili \cite[Theorem 9.1]{kadeishvili09};
see also \mbox{Vallette \cite[Theorem 8]{vallette12}}
or Amann \cite[\S 8.5]{amann15}.

We shall only be concerned with
$\mu_3 \colon H^*(\dga)^{\otimes 3} \to H^{*-1}(\dga)$, which may be defined
as follows. Let $\clalg^* \subseteq \dga^*$ denote the subspace of closed
elements as before. Pick a right inverse $\alpha \colon H^*(\dga) \to \clalg^*$
of the projection $\clalg^* \to H^*(\dga)$, and
a $\gamma \colon \clalg^* \to \dga^{*-1}$ such that
$d \gamma \colon \clalg^* \to \clalg^*$ is a projection to the exact part.
Further pick a map $p : \dga^* \to H^*(\dga)$ such that $p(\beta) = [\beta]$
for $\beta \in \clalg^*$, and set
\[ \mu_3(x,z,y) := p\big(\gamma(\alpha(x)\alpha(z))\alpha(y)
- (-1)^i \alpha(x)\gamma(\alpha(z)\alpha(y))\big) \in H^*(\dga) . \]
If $xz = yz = 0$ then clearly
\[ \mu_3(x,z,y) = \gen{x,z,y} \mod xH^{j+k-1}(\dga) + yH^{i+k-1}(\dga) , \]
\cf \cite[Lemma 9.4.6]{loday12}.
(For $k \geq 4$, the precise relationship between $k$-fold Massey products and
the higher products $\mu_k$ is more subtle: see Buijs, Moreno-Fernández and
Murillo \cite{bmm18}.)

We can also relate $\mu_3$ to the uniform Massey triple product.
%If we define $\wh \mu_3 : (H^*)^{\otimes 3} \to H^{*-1}$ by $p(\alpha \gamma)$.
If we define $\wh \mu_3 : \calg^2 H^* \otimes H^* \to H^{*-1}$ to be
induced by $(x,y,z) \mapsto p(\gamma(\alpha(x)\alpha(y))\alpha(z)$, then 
\[ (\phi^\vee \circ \wh \mu_3)(x,y,z)
= \wh \mu_3((-1)^{jk}(xz)y - (-1)^{ij+ik+jk}(zy)x) = (-1)^{jk}\mu_3(x,z,y) , \]
so $\wh \mu_3$ determines $\mu_3$.
Meanwhile, the restriction of $\wh \mu_3$ to
$\kersym{E^* \otimes H^*}$ is the uniform triple product~$\trip$.
Because $\mu_3$ determines the restriction of $\wh \mu_3$ to
$\kersym{\calg^2 H^* \otimes H^*}$ by the duality of Lemma \ref{lem:tri},
it also determines $\trip$.

The next lemma is essentially a converse statement.

\begin{lem}
Suppose $\dga$ and $\dga'$ are DGAs with a chosen isomorphism
$H^*(\dga) \cong H^*(\dga')$, such that $\trip = \trip'$.
Then there is a DGA $\cale$ with $H^*(\cale) = 0$ such that the choices in the
definition of the $A_\infty$ triple products $\mu_3$ of $\dga \times \cale$
and $\mu'_3$ of $\dga'$ can be made so that $\mu_3 = \mu'_3$.
\end{lem}

\begin{proof}
The hypothesis means that, having made the choices of $\alpha'$, $\gamma'$
and $p'$ in the definition of $\mu'_3$, we can choose $\alpha$ and $\gamma$
such that the restrictions of $\wh \mu_3$ and $\wh \mu'_3$ to
$\kersym{E^* \otimes H^*}$ agree.

Now, take $\cale$ to be the free DGA (\ie the differential of any generator
is never contained in the multiplicative algebra of the generators) whose
generators in degree $k$ is the degree $k{+}1$ part of the image of
$\csq : \calg^2 H^* \to H^*$. Choose
$\gamma : \calg^2 H^* \to (\dga \times \cale)^{*-1}$ to be the sum of the
$\gamma : \calg^2 H^* \to \dga^{*-1}$ above and
$\csq : \calg^2 H^* \to \cale^{*-1}$.
Then $d(\alpha \gamma) : \calg^2 H^* \otimes H^* \to \dga^* \times \cale^*$
has no kernel outside $\kersym{\calg^2 H^* \otimes H^*}$. Therefore,
by choosing $p$ we can adjust the map
$\wh \mu_3 : \kersym{\calg^2 H^* \otimes H^*} \to H^{*-1}$ arbitrarily subject
only to its restriction to $\kersym{E^* \otimes H^*}$ equalling $\trip$.
\end{proof}

In summary, $\mu_3$ determines $\trip$, and hence also $\princ$.
On the other hand, $\trip$ essentially captures all the information of $\mu_3$,
but in a way that makes the dependence on choices more transparent.
When $\dga$ satisfies Poincar\'e duality, $\princ$ in turn captures all the
information of $\trip$ in a way that eliminates dependence on choices
altogether.

\section{The rational homotopy type} \label{sec:QHT}

In this section we prove our main theorems on the significance of the \bmp{}
for the rational homotopy type---and hence formality and diffeomorphism
classification up to finite ambiguity---of \tkc{} manifolds of dimension
$m \leq 5n{-}3$.

\subsection{Minimal Sullivan algebras}
\label{subsec:minalg}

We first classify $(n{-}1)$-connected $m$-dimensional
Poincar\'e minimal Sullivan algebras via their cohomology algebras and \bmp s.
Recall that a \emph{minimal Sullivan algebra} is a DGA $(\mnalg, d)$ that is
free as a graded algebra, $\mnalg \cong \Lambda V$, and has a well-ordered
basis $\{v_\alpha\} \subset V$ such that $dv_\alpha$ lies in the
subalgebra generated by $\{ v_\beta : \beta < \alpha\}$, and
$\alpha \leq \beta \Rightarrow \deg v_\alpha \leq \deg v_\beta$.
We are only interested in the case when $\mnalg$ is simply-connected.
%\ie the degree 0 and 1 parts of $\mnalg$ are $\Q$ and $0$, respectively.
In this case, the minimality condition
reduces to saying that $\mnalg$ is free, and
\begin{equation}
\label{eq:min}
\textrm{for any $v \in \mnalg^i$, % in the degree $k$ part 
$dv$ is a linear combination of products of elements of degree} < i .
\end{equation}

Let $\dga$ be a finite-dimensional graded commutative algebra or DGA
over $\Q$.
We call $\dga$ \mbox{\em $j$-connected} if $\dga^0 = \Q$ and $\dga^k = 0$ for 
$1 \leq k \leq j$. Recall also from Definition \ref{def:poincare} that a
DGA $\dga$ is $m$-dimensional Poincar\'e if and only if $H^*(\dga)$ is.
%A DGA $\dga$ is $m$-dimensional {\em Poincar\'e} 
%if $H^*(\dga)$ is $m$-dimensional Poincar\'e 
%(in the sense of Definition \ref{def:poincare})

The key to the role of the \bmp{} in this paper is the following existence and
uniqueness result for minimal Sullivan algebras with prescribed \bmp.

\begin{thm}
\label{thm:minalg}
Let $n \geq 2$.
\begin{enumerate}
\item \label{it:minexist}
Let $m \leq 5n{-}2$.
For every \tkc{} $m$-dimensional Poincar\'e algebra $H^*$ (in the sense of
Definition \ref{def:poincare}) %of dimension $m \leq 5n{-}3$
and linear map $\princ \colon \bmspam{H^*} \to H^m$, 
there exists an $(n{-}1)$-connected minimal Sullivan algebra $\mnalg$ with 
$H^*(\mnalg) = H^*$ and \bmp~$\princ$.

\item \label{it:minunique}
Let $m \leq 5n{-}3$.
If $\mnalg_1$ and $\mnalg_2$ are \tkc{} $m$-dimensional Poincar\'e
minimal Sullivan algebras %of dimension $m \leq 5n{-}3$
and
$G \colon H^*(\mnalg_1) \to H^*(\mnalg_2)$ is an isomorphism of the cohomology
algebras then there is a DGA isomorphism $\phi \colon \mnalg_1 \to \mnalg_2$
such that $\phi_\# = G$ if and only if the diagram below commutes.
\[ \xymatrix{
\bmspam{\mnalg_1} \ar[d]^{\princ_1} \ar[r]^{G}
& \bmspam{\mnalg_2} \ar[d]^{\princ_2} \\
H^m(\mnalg_1) \ar[r]^{G} & H^m(\mnalg_2)}  \]
\end{enumerate}
\end{thm}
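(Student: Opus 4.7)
The plan is to prove (i) by iterated minimal Sullivan extensions and (ii) by extending a DGA homomorphism degree-by-degree, identifying the Bianchi-Massey tensor as the sole nontrivial obstruction.

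For (i), I build $\mnalg = \Lambda V$ by adding generators in increasing degree. Set $V^n := H^n$ with $d = 0$. At degree $2n-1$, add closed generators realising $H^{2n-1}$ together with a subspace identified with $\kc_n$ on which $d$ is the inclusion $\kc_n \hookrightarrow \esym^2 V^n$. At degree $2n$, add closed generators for $\mathrm{coker}(\csq) \subseteq H^{2n}$. Continue inductively for $2n < k \leq 4n-3$, adding closed generators realising $H^k$ and non-closed ones killing the relevant kernel in $H^{k+1}$. These choices are forced by $H^*$ up to change of basis, so $\mnalg_{<4n-2}$ is determined up to isomorphism by $H^*$. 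By Lemma \ref{lem:principal}, each element $\prp \symp \rp^2(e \otimes e')$ for $e \otimes e' \in \bmsp_n$ is closed in $\mnalg_{<4n-2}^{4n-1}$. In the final extension over degree $4n-2$, I use the freedom in choosing differentials on the non-closed generators to arrange that the surviving $H^{4n-1}(\mnalg) \cong \Q$ is the line on which the class of each $\prp \symp \rp^2(e \otimes e')$ has coordinate $\princ(e \otimes e')$. Then $H^*(\mnalg) \cong H^*$ by construction, and by Definition \ref{def:bmp} the BM tensor of $\mnalg$ equals $\princ$.

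For (ii), the ``only if'' direction is immediate from the functoriality of $\princ$. For the converse, I build $\phi : \mnalg_1 \to \mnalg_2$ by induction on the generator-degrees of $\mnalg_1$, starting with the canonical lift $V^n_1 \cong H^n(\mnalg_1) \to H^n(\mnalg_2) \cong V^n_2$ provided by $G$. Extending $\phi$ over non-closed generators in degrees $< 4n-2$ is unobstructed: the obstruction classes all lie in $H^*(\mnalg_2)$ in degrees $< 4n-1$, and because the data controlling them in those degrees is just the ring structure, which $G$ respects, one can arrange the choices of $\phi$ on earlier generators to make them vanish. At degree $4n-2$, the obstruction to extending $\phi$ over a non-closed generator $v$ is $[\phi(dv)] \in H^{4n-1}(\mnalg_2) \cong \Q$. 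By an expansion parallel to Lemma \ref{lem:principal}, with $\phi|_{\kc_n}$ in $V^{2n-1}_1$ playing the role of $\prp$, the collective obstruction on the subspace of $V^{4n-2}_1$ corresponding to $\bmsp_n(\mnalg_1)$ equals the discrepancy $G \circ \princ_1 - \princ_2 \circ G$. This vanishes by hypothesis, so $\phi$ extends; since $\phi_\# = G$ is an isomorphism, so is $\phi$.

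The main obstacle is this identification of the degree $4n-2$ obstruction with the BM discrepancy $G \circ \princ_1 - \princ_2 \circ G$. Making it precise requires carefully parametrising the inductive extensions in intermediate degrees, with these parameters playing roles analogous to the choices of $\rp$ and $\prp$ in Lemma \ref{lem:principal}, and verifying that the obstruction factors through $\bmsp_n = \bmspe[\kc_n] \subseteq \pol^2 \kc_n$ via the same Bianchi-identity argument used to show the image of $\prp \symp \rp^2$ is closed on $\bmspe[\kc_n]$. The step essentially amounts to running the argument of Lemma \ref{lem:principal} in reverse.
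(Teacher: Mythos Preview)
Your outline is sound and close in spirit to the paper's argument, but there is more hidden difficulty in the intermediate degrees than you allow. In (ii) you assert that obstructions in degrees $< 4n{-}2$ can be killed by earlier choices because ``the data controlling them is just the ring structure''; this is where much of the real work lies. The obstruction to extending $\phi$ over a non-closed $v \in V_1^k$ is $[\phi(dv)] \in H^{k+1}(\mnalg_2)$, and the available corrections come from adding closed elements to $\phi$ on earlier non-closed generators, which shift $[\phi(dv)]$ only by products of cohomology classes. Showing these corrections span the obstruction space in every intermediate degree $2n \leq k \leq 4n{-}3$ is not automatic---it is exactly where Poincar\'e duality enters, and you never invoke it. The same issue arises in (i): past degree $2n$ your recipe (``add closed generators realising $H^k$ and kill the relevant kernel'') only pins down the truncation of the formal model of $(H^*,0)$, and you then need a separate argument that the freedom at degree $4n{-}2$ suffices to hit any prescribed $\princ$ while still getting $H^{4n-1}(\mnalg) \cong \Q$ and extending coherently in all higher degrees.

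The paper organises this by truncating at degree $<2n$ rather than $<4n{-}2$. It computes $H^{4n-1}(\mnalg^{<2n})$ explicitly (Lemma~\ref{lem:trunc_bmp}), showing that the BM map is injective there with image transverse to the products, and thereby identifies $\princ$ with one component of a $(2n{-}1)$-partial Poincar\'e class $\alpha \in H^{4n-1}(\mnalg^{<2n})^*$. A standalone Poincar\'e-duality completion result (Proposition~\ref{prop:PD}) then does all the work above degree $2n{-}1$: any minimal algebra generated in degree $\leq k$ with a $k$-partial Poincar\'e class extends to a full Poincar\'e minimal algebra, and any isomorphism of such truncations compatible with the Poincar\'e classes extends. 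This absorbs precisely the intermediate-degree obstructions you are waving past, and reduces both (i) and (ii) to a single degree-$(2n{-}1)$ step plus a general lemma that does not mention the BM tensor at all. Your direct approach can be made to work, but carrying it out honestly would force you to reprove the content of Proposition~\ref{prop:PD} degree by degree.
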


Let us first outline the essence of the proof.
The standard technique is to describe the
degree~$i$ part $V^i$ of the generating set of the minimal algebra
$\mnalg = \Lambda V$ recursively in terms of the cohomology algebra and any
further data (\ie the \bmp{} in this case), using the property \eqref{eq:min}.
%Let $\clalg^i \subset \mnalg^i$ denote the closed subspace, and
Let $\mnalg_\trc{i}$ denote the sub-DGA generated by elements of
degree $\leq i$, or equivalently $\mnalg_\trc{i} = \Lambda V^{\leq i}$. 
Presenting $V^i$ as the sum of its closed subspace $C^i$ and a direct
complement~$N^i$, in the $i$th step of the recursion one identifies
$N^{i-1}$ and $C^i$ with the kernel and cokernel of
$H^i(\mnalg_\trc{i-2}) \to H^i(\mnalg)$ respectively
(this relies on our algebras being simply-connected, so that $V^1 = 0$).
In the setting of Theorem \ref{thm:minalg},
this map is essentially determined by the cohomology algebra except for 
$3n{-}1 \leq i \leq m{-}n$ and $i = m$, where the \bmp{} appears (actually, it
might be more accurate to say that the uniform triple product $\trip$ appears,
and that this is controlled by the \bmp{} via Lemma \ref{lem:unitrip_to_bm}). 
To prove the uniqueness statement \ref{it:minunique}, one can argue that
the generating set of any minimal algebra with the prescribed cohomology
and \bmp{} can be described this way---while the description involves some
arbitrary choices, those can be expressed in terms of the cohomology algebra. 
(If $m = 5n{-}2$, then some information about fourfold Massey products would
be needed to capture the essence of $H^i(\mnalg_\trc{i-2}) \to H^i(\mnalg)$ for
$i = m{-}n$ and $m$, which is why \ref{it:minunique} requires $m \leq 5n{-}3$.)

We will split the argument into two parts. First we explain in
Proposition \ref{prop:PD} that more generally, a minimal algebra $\mnalg$ that is $m$-dimensional Poincar\'e is
essentially characterised by $\mnalg_\trc{k}$
%the subalgebra $\mnalg_\trc{k}$ generated %by elements of degree $\leq k$
for any $k$ with $2k \geq m{-}1$, together with
the map $H^m(\mnalg_\trc{k}) \to H^m(\mnalg)$. This is implicit in
Kreck-Triantafillou \mbox{\cite[Theorem 1.2]{kreck91a}}.
Proposition \ref{prop:trunc} %(mostly-corollary of Lemma \ref{lem:trunc_bmp})
then shows that for \tkc{} %$m$-dimensional
Poincar\'e minimal algebras of dimension $m \leq 5n{-}3$,
that data is characterised by the cohomology algebra and the \bmp.

Proving Theorem \ref{thm:minalg} in one go would be shorter in total
(though longer than the proofs of Propositions \ref{prop:PD} and
\ref{prop:trunc} individually), in part because splitting up the proof as we do
involves first encoding much of the algebra structure of $H^*(\mnalg)$ in terms
of the map $H^m(\mnalg_\trc{k}) \to H^m(\mnalg)$, and then reconstructing the
algebra again. %The advantage of 
Our reason for organising the proof as we do is that we hope that it will make
is easier to identify what further invariants are needed to determine the
rational homotopy type if we weaken the connectedness hypothesis,
and also that it clarifies the relation to the results of Kreck and
Triantafillou.

%\pagebreak[4]
\subsection{Reconstructing a minimal DGA from a partial Poincar\'e class}

We employ the following terminology from
Kreck and Triantafillou \cite[\S 1]{kreck91a}.

\begin{defn} \label{def:pp}
Let $H^*$ be a finite dimensional graded commutative algebra over the
rationals.
For $2k{+}1 \geq m$, we call $\alpha \in (H^m)^\vee$ a
\emph{$k$-partial Poincar\'e class} if the map
\[ \alpha \cap : H^i \to (H^{m-i})^\vee, \; x \mapsto (y \mapsto \alpha(xy)) \]
is an isomorphism for $m - k \leq i \leq k$ and injective for
$i = k{+}1$ (and hence surjective for $i = m{-}k{+}1$).
\end{defn}

We aim to prove in Proposition \ref{prop:PD} that a minimal DGA
$\mnalg$ that is $m$-dimensional Poincar\'e
can essentially be reconstructed from its
truncation $\mnalg_\trc{k}$ and a $k$-partial Poincar\'e class
$\alpha \in H^m(\mnalg_\trc{k})$, 
provided $2 \leq k \leq 2m{+}1$.
Let us first consider the following easier problem of reconstructing a Poincar\'e
algebra $\bar H^*$ from a truncation $\bar H^{\leq k+1}$ together with a
$k$-partial Poincar\'e class in the degree $m$ part of
$\Lambda \bar H^{\leq k+1}$ (by which we mean the algebra generated by
$\bar H^{\leq k+1}$, with no relations imposed beyond those generated by the
relations of $\bar H$ in degree $\leq k{+}1$).

\begin{lem}
\label{lem:partial}
Let $k \geq 2$ and $m \leq 2k{+}1$.
\begin{enumerate}
\item \label{it:restrict}
Let $\phi : H^* \to \bar H^*$ be an algebra homomorphism that is an isomorphism
in degree $\leq k$, and let $\bar \alpha \in \bar H^*$ be a Poincar\'e class.
Then $\phi^\vee \bar \alpha \in H^m$ is a $k$-partial Poincar\'e class
if and only if $\phi$ is injective.

\item \label{it:reconstr}
Let $H^*$ be an algebra generated by elements of degree $\leq k{+}1$,
and let $\alpha \in H^m$ be a $k$-partial Poincar\'e class.
Then there exists a unique (up to isomorphism)
$m$-dimensional Poincar\'e duality algebra
$\bar H^*$ with an algebra homomorphism $\phi : H^* \to \bar H^*$ and
Poincar\'e class $\bar \alpha \in (\bar H^m)^\vee$ such that
\begin{itemize}
\item $\phi : H^i \cong \bar H^i$ is an isomorphism for $i \leq k$ and
$\phi : H^{k+1} \into \bar H^{k+1}$ is injective;
\item $\phi^\vee \bar \alpha = \alpha$.
\end{itemize}
\end{enumerate}
\end{lem}

\begin{proof}
\ref{it:restrict} Immediate from the commutativity of the diagram
\[ \xymatrix{
H^i \ar[d]_{(\phi^\vee \bar\alpha) \cap} \ar[r]^{\phi}
& \bar H^i \ar[d]^{\bar\alpha\cap} \\
(H^{m-i})^\vee & (\bar H^{m-i})^\vee.
\ar[l]_(0.425){\phi^\vee} } \]
\ref{it:reconstr} Let $\bar H^i := H^i$ for $i \leq k$, and
$\bar H^i := (H^{m-i})^\vee$ for $i > k$. Define the product of
$x \in \bar H^i$ and $y \in \bar H^j$ using the product structure of $H^*$
if $i + j \leq k$, and
as $\bar \alpha \cap (xy) \in (H^{m-i-j})^\vee = \bar H^{i+j}$
if $ i, j \leq k$ and $i+j \geq k$.
If $i \leq k$ and $j > k$, so that $y \in (H^{m-j})^\vee$, let
$xy = y \cap x \in (H^{m-i-j})^\vee$, \ie the map
$H^{m-i-j} \to \Q, z \mapsto y(xz)$.
If $i, j > k$ then $xy = 0$.
\end{proof}

Given an $m$-dimensional Poincar\'e algebra
$(\bar H^*, \bar \alpha)$, the multiplication induces a natural map
$\phi : \Lambda \bar H^{\leq k+1} \to \bar H^*$ that is an isomorphism
in degrees $\leq k{+}1$. Applying Lemma \ref{lem:partial}\ref{it:reconstr} to
this $\phi$ and $\alpha := \phi^\vee \bar \alpha$ one recovers the original
$\bar H^*$.

On the other hand, given an algebra $H^*$, let $H_\trc{k}^*$ denote the
subalgebra generated by classes of degree $\leq k$
(the image of the natural map $\Lambda H^{\leq k} \to H^*$).
Note that whether $\alpha \in (H^m)^\vee$ is a $k$-partial Poincar\'e class
depends only on the restriction of $\alpha$ to $H_\trc{k+1}^m$.
For any $k$-partial Poincar\'e class $\alpha \in (H^m)^\vee$, we can thus apply
Lemma \ref{lem:partial}\ref{it:reconstr} to
%$\left(H_\trc{k+1}^*, \; \alpha_{|H_\trc{k+1}^m}\right)$ 
$\bigl(H_\trc{k+1}^*, \; \alpha_{|H_\trc{k+1}^m} \bigr)$ 
to canonically
construct a Poincar\'e algebra $\bar H^*$ (though there need not be a canonical
way to extend the homomorphism $H_\trc{k+1}^* \to \bar H^*$ to $H^*$).

In this sense,
in Proposition \ref{prop:PD}\ref{it:PD_exist} the restriction of
$\alpha$ to $H^m_\trc{k+1}(\tdga)$
is responsible
for reconstructing the cohomology algebra of $\mnalg$, while the remaining
components of $\alpha$ encode Massey products etc.

Note that Lemma \ref{lem:partial}\ref{it:restrict} implies in particular that
if $\mnalg$ is a DGA and $\alpha \in H^m(\mnalg)^\vee$ is a Poincar\'e class
then the image of $\alpha$ in $H^m(\mnalg_\trc{k})^\vee$ is a $k$-partial
Poincar\'e class (where as before $\mnalg_\trc{k} \subseteq \mnalg$ denotes the
sub-DGA generated by elements of degree $\leq k$).

%\pagebreak[4]
\begin{prop}
\label{prop:PD}
Let $k \geq 2$ and $m \leq 2k{+}1$.
\begin{enumerate}

\item \label{it:PD_exist}
Let $\tdga$ be a simply-connected minimal Sullivan algebra generated in
degree $\leq k$, and let $\alpha \in H^m(\tdga)^\vee$ be a $k$-partial
Poincar\'e class. Then there is a minimal Sullivan algebra $\mnalg$ with
Poincar\'e class $\alpha_\mnalg \in H^m(\mnalg)^*$ and an isomorphism
$\tau \colon \tdga \to \mnalg_\trc{k}$ such that
$\tau_\#^\vee\alpha_\mnalg = \alpha$.

\item \label{it:PD_unique}
Let $\mnalg_1$, $\mnalg_2$ be simply-connected minimal Sullivan algebras
that are $m$-dimensional Poincar\'e. %elements $\alpha \in H^n(\mnalg)$, $\alpha_2 \in H^n(\mnalg_2)$.
Let $\tau \colon \mnalg_{1 \trc{k}} \to \mnalg_{2 \trc{k}}$ and
$G \colon H^m(\mnalg_1) \to H^m(\mnalg_2)$ be isomorphisms,
such that the diagram below commutes.
\[ \xymatrix{
H^m(\mnalg_{1 \trc{k}}) \ar[d] \ar[r]^{\tau_\#}  & H^m(\mnalg_{2 \trc{k}})
\ar[d] \\
H^m(\mnalg_1) \ar[r]^{G} & H^m(\mnalg_2) } \]
Then there is an isomorphism $\phi \colon \mnalg_1 \to \mnalg_2$
such that the restriction
$\phi_\trc{k} \colon \mnalg_{1 \trc{k}} \to \mnalg_{2 \trc{k}}$ equals $\tau$
and $\phi_\# \colon H^m(\mnalg_1) \to H^m(\mnalg_2)$ equals $G$.
\end{enumerate}
\end{prop}

\begin{proof}
\ref{it:PD_exist}
If $k \geq m$, then constructing the generators $V^i$ in degree $i > k$ for
$\mnalg$ is a trivial recursion: $d$~maps $V^i = N^i$ isomorphically to
the closed subspace of $\mnalg_\trc{i}^{i+1}$.
The following claim, which lets us increase $k$ inductively until we reach
$k = m$, therefore proves the result.

\smallskip
{\narrower\noindent
There exists a minimal Sullivan algebra $\edga$ generated in degree $\leq k{+}1$
with a $(k{+}1)$-partial Poincar\'e class $\alpha_\edga \in H^m(\edga)^\vee$ and
an isomorphism $\phi \colon \tdga \to \edga_\trc{k}$ such that
$\phi_\#^\vee \alpha_\edga = \alpha$.\par}

\smallskip\noindent
Let us first construct the generating space $V$ for $\edga$.
We take the degree $\leq k$ parts to equal those of~$\tdga$
(and define $\phi$ to be the inclusion).
Choose a direct complement $C^{k+1}$ of the image of
$\alpha\cap : H^{k+1}(\tdga) \to H^{m-k-1}(\tdga)^\vee$, and set
$V^{k+1} := C^{k+1} \oplus N^{k+1}$, where $d \colon V^{k+1} \to \tdga^{k+2}$
has kernel $C^{k+1}$ and maps $N^{k+1}$ isomorphically to a subspace of
$\clalg^{k+2}$ (the closed subspace of $\tdga^{k+2}$) representing the kernel
of $\alpha \cap : H^{k+2}(\tdga) \to H^{m-k-2}(\tdga)^\vee$.

We need to study $H^m(\edga)$. Note that
$\edga^m = \tdga^m \; \oplus \; V^{k+1} \otimes \tdga^{m-k-1}$, and the closed
subspace is contained in $\tdga^m \; \oplus \; V^{k+1} \otimes \clalg^{m-k-1}$.
Therefore $H^m(\edga)$ can be written as a direct sum of
the images of $H^m(\tdga)$ and $C^{k+1} \otimes H^{m-k-1}(\tdga)$ and a direct
complement $W$. % that is the graph of a map from a subspace of
%$N^{k+1} \otimes \clalg^{m-k-1}$ to $\tdga^m$.

Note that
$\edga^{m-1} = \tdga^{m-1} \; \oplus \; V^{k+1} \otimes \tdga^{m-k-2}$.
Therefore we have $d\edga^{m-1} \; \cap \; C^{k+1} \otimes \clalg^{m-k-1} =
C^{k+1} \otimes d\tdga^{m-k-2}$, so the map
$C^{k+1} \otimes H^{m-k-1}(\tdga) \to H^m(\edga)$ is injective.
On the other hand, the kernel of $H^m(\tdga) \to H^m(\edga)$ consists of
classes represented by elements of
$\tdga^m \cap {d(V^{k+1} \otimes \tdga^{m-k-2})}
= dN^{k+1} \otimes \clalg^{m-k-2}$. Since this is contained
in the kernel of $\alpha$ by construction, $\alpha$ factors through
$\phi_\#$.

We can therefore define the restriction of $\alpha_\edga$
to the image of $H^m(\tdga)$ by requiring that
$\alpha = \phi_\#^\vee \alpha_\edga$. Further we define the restriction
to $C^{k+1} \otimes H^{m-k-1}(\tdga)$ to be the natural map arising from
$C^{k+1}$ being a subspace of $H^{m-k-1}(\tdga)^\vee$, and choose the
restriction to $W$ to be~0.
It remains to prove that this $\alpha_\edga \in H^m(\edga)^\vee$ is a
$(k{+}1)$-partial Poincar\'e class.

For $m-k \leq i \leq k$, $H^i(\edga) \cong H^i(\tdga)$, and it is easy
to see that $\alpha_\edga \cap$ is equivalent to the isomorphism
$\alpha \cap$. Meanwhile $H^{k+1}(\edga) \cong H^{k+1}(\tdga) \oplus C^{k+1}$,
and $\alpha_\edga \cap :
H^{k+1}(\edga) \to H^{m-k-1}(\edga)^\vee \cong H^{m-k-1}(\tdga)^\vee$ equals
the injective map $\alpha \cap$ on the $H^{k+1}(\tdga)$ summand, and
the inclusion $C^{k+1} \into H^{m-k-1}(\tdga)^\vee$ on $C^{k+1}$. Since we chose
$C^{k+1}$ to be a direct complement of the image of $\alpha \cap$ that means
that $\alpha_\edga \cap$ is an isomorphism on $H^{k+1}(\edga)$ too.
Finally,
$H^{k+2}(\edga) = \clalg^{k+2}/dN^{k+1} \cong H^{k+2}(\tdga)/\ker(\alpha\cap)$,
and $\alpha_\edga \cap : H^{k+2}(\tdga)/\ker(\alpha\cap) \to
H^{m-k-2}(\edga)^\vee \cong H^{m-k-2}(\tdga)^\vee$ is the map induced by
$\alpha \cap$, so injective.

\smallskip \noindent 
\ref{it:PD_unique} follows by induction from the following claim.
 
\smallskip
{\narrower\noindent
Let $\edga_1$ and $\edga_2$ be minimal Sullivan algebras generated in degree
$\leq k{+}1$, with $(k{+}1)$-partial Poincar\'e classes
$\alpha_j \in H^m(\edga_j)^\vee$.
Suppose $\tau \colon \edga_{1 \trc{k}} \to \edga_{2 \trc{k}}$ is an isomorphism
such that the class $\tau_\#^\vee \alpha_2 \in H^m(\edga_{1 \trc{k}})$ equals
the restriction of $\alpha_1$.
Then there exists an isomorphism $\phi \colon \edga_1 \to \edga_2$ such that
$\phi_\trc{k} = \tau$ and $\phi_\#^\vee \alpha_2 = \alpha_1$.\par}

\smallskip\noindent
Setting $\tdga_j := \edga_{j \trc{k}}$, we can use the argument in (ii) to
describe the generating spaces of $\edga_j$. This involves %arbitrary
choices of $C^{k+1}_j \subseteq H^{m-k-1}(\tdga_j)^*$ and
$dN^{k+1}_j \subseteq \tdga_j^{k+1}$, and we choose
$C^{k+1}_1 = \tau_\#^\vee (C^{k+1}_2)$ and
$dN^{k+1}_2 = \tau(dN^{k+1}_1)$. For any linear map
$\kappa \colon N^{k+1}_1 \to \clalg^{k+1}_2$, we can define
an isomorphism $\phi_\kappa \colon \edga_1 \to \edga_2$ by setting it
to equal $\tau$ on $\edga_{1 \trc{k}}$, $\tau_\#$ on $C^{k+1}_1$,
and $\kappa + d^{-1} {\circ} \tau {\circ} d$ on $N^{k+1}_1$ (taking the
inverse of $d : N^{k+1}_2 \to dN^{k+1}_2$)---indeed any isomorphism
$\phi\colon \edga_1 \to \edga_2$ such that $\phi_\trc{k} = \tau$
is of this form.

It remains to understand $\phi_\kappa^\vee \alpha_2$. In the decomposition
from (ii) of $H^m(\edga_1)$ as the direct sum of the image of
$H^m(\edga_{1 \trc{k}})$, $C^{k+1}_1 \otimes H^{m-k-1}(\edga_{1 \trc{k}})$
and $W$, the restrictions of $\alpha_1$ and $\phi_\kappa^\vee\alpha_2$ to the
first two summands agree for any $\kappa$.

Let $\calw \subseteq \edga_1^m$ be a subspace of closed representatives of $W$.
The projection
$p \colon \edga_1^m \to N^{k+1} \otimes \tdga_1^{m-k-1}$ (with kernel
$\tdga_1^m \; \oplus \; C_1^{k+1} \otimes \tdga_1^{m-k-1}$) maps
$\calw \into N_1^{k+1} \otimes \clalg_1^{m-k-1}$.
Let us now explain that the induced map
$p \colon W \to N_1^{n+1} \otimes H^{m-k-1}(\tdga_1)$ is injective too.

Suppose that for some $w \in \calw$, $p(w) = \sum n_j \otimes dx_j$,
for $n_j \in N_1^{k+1}$ and $x_j \in \tdga_1^{m-k-2}$. Note that
$\sum dn_j \otimes x_j \in \tdga_1^{k+2} \tdga_1^{m-k-2} \subseteq \tdga_1^m$.
Therefore $w - d(\sum n_j \otimes x_j)$, which represents the same class in
$H^m(\tdga_1)$ as $w$, lies in the kernel of $p$. But $W$ is by definition a
direct complement to the space of classes represented by elements of the
kernel of $p$. Hence $W \into N_1^{n+1} \otimes H^{m-k-1}(\tdga_1)$.

The restriction of $(\phi_\kappa)_\#^\vee\alpha_2 - (\phi_0)_\#^\vee\alpha_2$
to $W$ equals the composition of $p$ with the linear map
$N_1^{k+1} \otimes H^{m-k-1}(\tdga_1) \to \Q, \; n \otimes x \mapsto
\alpha_2(\kappa(n)\tau_\# x)$. Because
$\tau_\# : H^{m-k-1}(\tdga_1) \to H^{m-k-1}(\tdga_2)$ and
$\alpha_2 \cap \colon H^{m-k-1}(\tdga_2) \to H^{k+1}(\tdga_2)^\vee$ are
isomorphisms, any linear functional on $N_1^{k+1} \otimes H^{m-k-1}(\tdga_1)$
is realised this way for some $\kappa \colon N^{k+1}_1 \to \clalg^{k+1}_2$.
Thus by adjusting the choice of $\kappa$, the restriction of
$(\phi_\kappa)_\#^\vee\alpha_2$ to $W$ can be made to equal $\alpha_1$.
\end{proof}

\subsection{Partial Poincar\'e classes and the \bmp}

We pointed out above that in Proposition \ref{prop:PD}\ref{it:PD_exist} the
restriction of $\alpha$ to $H^m_\trc{k+1}(\tdga)$ is responsible
for reconstructing the cohomology algebra of $\mnalg$.
If we now require that $\tdga$ is \tkc{} (in addition to being minimal and
generated in degree $\leq k$), we consider to what extent the remaining
components of $\alpha$ are determined by the \bmp.

In the setting of primary interest in this paper, \ie when $m \leq 5n{-}3$,
and for suitable $k$ we can show that essentially all of those remaining
components are captured by the \bmp.
For $m = 5n{-}2$ we can at least show that the components of the \bmp{}
are independent of the components responsible for reconstructing the
Poincar\'e cohomology algebra.

In the statement of the next lemma, we use the following notation:
given $k$ and $\tdga$, let $\bmspak{\tdga}$ be the intersection
of $\bmspa{\tdga}$ with the image in $\gsym^2\gsym^2 H^*(\tdga)$ of
$((H^*)^{\otimes 3})^{\leq k{+}2} \otimes H^*$. In other words, $\bmspak{\tdga}$
is the part of $\bmspa{\tdga}$ that involves only quadruples of classes where
the sums of the three lowest degrees is at most $k{+}2$.
Note that if $\tdga$ is \tkc{} then certainly $\bmspa{\tdga}$ involves only
classes of degree $\geq n$, so if $k \leq 3n{-}3$ then $\bmspak{\tdga} = 0$.
The proof of \ref{it:trunc_exist} is tidier in this case, but to prove the
realisation result Theorem \ref{thm:minalg}\ref{it:minexist} when $m = 5n{-}2$
we need to allow $k = 3n{-}2$. 

\begin{lem}
\label{lem:trunc_bmp}
Let $n \geq 2$ and $m \geq 4n{-}1$.
\begin{enumerate}
\item \label{it:trunc_exist}
Let $k \leq 4n{-}3$, and let $\tdga$ be an \tkc{} minimal Sullivan algebra
generated in degree $\leq k$.
\begin{enumerate}
\item If $m \leq 6n{-}3$ then the intersection of $H_\trc{k+1}^m(\tdga)$
(\ie the subspace of $H^m(\tdga)$ generated by products
of classes of degree $\leq k{+}1$) with
the image of $\princ \colon \bmspam{\tdga} \to H^m(\tdga)$
is contained in $\princ(\bmspakm{\tdga})$.
\item If $m \leq 5n{-}2$ then the kernel of
$\princ\colon \bmspam{\tdga} \to H^m(\tdga)$ is contained in $\bmspakm{\tdga}$.
\end{enumerate}

\item \label{it:trunc_unique}
Suppose $m \leq 5n{-}3$ and $k \leq 3n{-}3$.
Let $\tdga_1$ and $\tdga_2$ be \tkc{} minimal Sullivan algebras
generated in degree $\leq k$.
Given an isomorphism $G \colon H^{\leq k}(\tdga_1) \to H^{\leq k}(\tdga_2)$
of the truncated cohomology rings, and $k$-partial Poincar\'e
classes $\alpha_j \in H^m(\tdga_j)^\vee$,
there is an isomorphism $\tau \colon \tdga_1 \to \tdga_2$ such that
$\tau_\# = G$ on $H^{\leq k}(\tdga_1)$ and $\tau_\#^\vee \alpha_2 = \alpha_1$
if and only if the diagram below commutes.
\[ \xymatrix{
(\Lambda H^{\leq k}(\tdga_1))^m \oplus \bmspam{\tdga_1} \ar[d]^G \ar[r]
& H^m(\tdga_1) \ar[r]^(0.65){\alpha_1} & \Q \ar[d] \\
(\Lambda H^{\leq k}(\tdga_2))^m \oplus \bmspam{\tdga_2} \ar[r]
& H^m(\tdga_2) \ar[r]^(0.65){\alpha_2} & \Q } \]
%
%Two $k$-models $\tdga_i$ of $H^*$ are isomorphic (as $k$-models) if and only
%if $\wt \princ_1 = \wt \princ_2$.
\end{enumerate}
\end{lem}

%\pagebreak[2]
\begin{rmk}
If $\tdga$ is generated in degree $\leq k$, then
$\Lambda H^{\leq j}(\tdga) \into H^*(\tdga)$ for any $j > k$.
If $\tdga$ is \tkc, then the image $H_\trc{j}^*(\tdga)$ is the same for
all $k \leq j < 3n{-}2$; however it is \emph{not} necessarily isomorphic to
$\Lambda H^{\leq k}(\tdga)$. In \ref{it:trunc_unique}, $G : H^{\leq k}(\tdga_1) \to H^{\leq k}(\tdga_2)$ would therefore not automatically induce a map
$H_\trc{k}^*(\tdga_1) \to H_\trc{k}^*(\tdga_2)$ (never mind extend to
$H^*(\tdga_1) \to H^*(\tdga_2)$) if we did not also assume the commutativity
of the diagram.

Since $\bmspak{\tdga} = 0$ if $k \leq 3n{-}3$, \ref{it:trunc_exist} implies
that the maps $\princ\colon \bmspam{\tdga_i} \to H^m(\tdga_i)$ in
\ref{it:trunc_unique} are injective, with image transverse to
$H_\trc{k}^m(\tdga_i) = H_\trc{k+1}^m(\tdga_i)$.
%\end{rmk}

%\begin{rmk}
%%Setting $k := m - 2n$
Insisting that $k \geq m{-}2n$ in \ref{it:trunc_unique} ensures that if
$\mnalg$ is \tkc{} then $\bmspam{\mnalg_\trc{k}} = \bmspam{\mnalg}$.
%The proof of the lemma could be adapted directly to the case when
%$m-2n \leq k \leq 3n{-}3$ (as opposed to using Proposition \ref{prop:PD}).
The statement %of the lemma
would not in general be true if we instead set $k := [m/2]$.
\end{rmk}

\begin{proof} The argument is similar to the induction steps in the proof
of Proposition \ref{prop:PD}.

\smallskip

\noindent
\ref{it:trunc_exist}
We begin by describing a generating space $V$ for $\tdga = \Lambda V$,
as before writing $V^i$ as a direct sum of the closed subspace $C^i$
and a direct complement $N^i$.

By a trivial recursion we find that
$V^i = 0$ for $0 < i < n$, and
$V^i = C^i$ for $n \leq i \leq 2n{-}2$ (\ie $V^i$ consists of only closed
elements). Further $C^i \cong H^i(\tdga)$ for $i \leq 2n{-}1$.

For $2n \leq i \leq 3n{-}2$, the closed subspace of $\tdga_\trc{i-1}^i$ is
precisely $(\calg^2 C^*)^i$. 
Thus $C^i$ is identified with a direct complement to the image of
$(\calg^2 C^*)^i \to H^i(\tdga)$, while $d$ maps $N^{i-1}$ isomorphically to
its kernel.

For $3n{-}1 \leq i \leq k{+}1$, the closed subspace of $\tdga_\trc{i-1}^i$ is a
direct sum of $(\calg^2 C^* \oplus \calg^3 C^*)^i$ and the
closed subspace of $(C^* \otimes N^*)^i$; we let $T^i$ denote the latter term.
Thus $C^i$ is identified with a direct complement to the image of
$(\calg^2 C^* \oplus \calg^3 C^* \oplus T)^i \to H^i(\tdga)$ (for $i \leq k$),
and $N^{i-1}$ is mapped isomorphically to the kernel.

Let $\wt \ccalg := \Lambda C \subseteq \tdga$.
Then we can decompose $\tdga^*$ as a direct sum of
$\wt \ccalg^*$, $\wt \ccalg^* \otimes N^*$, $\wt \ccalg^* \otimes \gsym^2N^*$
etc.
The first term consists of only closed forms, while the closed
subspace of the second term contains $C^* \otimes T^*$.

\medskip

\noindent
(a) %Since $N^*$ is a direct sum of spaces mapped by $d$ to
%$\gsym^2 C^* \oplus \gsym^2 C^*$ and to $T^*$,
%the differential of any element of $\wt \ccalg \otimes N^*$
%is a sum of exact elements in $\wt \ccalg^*$ and
%in $\wt \ccalg^* \otimes T^*$.
$d(\wt \ccalg \otimes N^*) \subseteq \wt \ccalg \; \oplus \; \wt\ccalg \otimes T^*$,
while any element of $\wt \ccalg \; \oplus \; \wt \ccalg \otimes N^*$ that is
the differential of something in $\wt \ccalg \otimes \gsym^{\geq 2}N^*$
belongs to $\wt \ccalg \otimes N^*$.

Note that for $i \leq 3n{-}1$, 
$E^i \cong \ker \left((\calg^2 C^*)^i \to H^{i+1}(\tdga)\right)$,
which is the injective image under $d$ of a subspace
$\overline N^{i-1} \subseteq N^{i-1}$
(in fact equality holds for $i \leq 3n{-}2$).
Because $\bmspam{\tdga}$ only involves $E^i$ for $i \leq m+1 - 2n \leq 3n-1$,
we can therefore choose the map $\gamma : E^i \to \tdga^{i-1}$ in the
definition of $\princ$ to take values in $\overline N^{i-1}$,
so that $d\gamma$ takes values in $\gsym^2 C^* \subseteq \wt \ccalg^*$.
In particular, the image of $\princ : \bmspam{\tdga} \to H^m(\tdga)$ is
represented by elements of $N^* \otimes \wt \ccalg^*$.

On the other hand, $H_\trc{k+1}^m(\tdga)$ is the subspace represented by the
degree $m$ part of $\wt \ccalg^* \; \oplus \; T^* \otimes \wt \ccalg^*$.
Therefore the intersection of the images in $H^m(\tdga)$
can be represented by elements in the intersection
$\wt \ccalg^* \otimes T^*$, corresponding to $\bmspakm{\tdga}$.

\medskip

\noindent
(b) Using once more that
$\kc^i \cong \ker \left((\calg^2 C^*)^i \to H^{i+1}(\tdga)\right)
= d\overline N^{i-1}$ in the degrees that contribute to the degree $m+1$ part of
$\kc^* \otimes \kc^*$, the degree $m+1$ part of $\kersym{\kc^* \otimes \kc^*}$
is isomorphic to the closed subspace $\calk^m$ of
the degree $m$ part of $\overline N^* \otimes d\overline N^*$.
Further $\agrad^2 \kc^*$ is mapped to $d(\overline N^* \otimes \overline N^*)$.
We therefore obtain an isomorphism
$\bmspam{\tdga} \to (\kersym{\kc^* \otimes \kc^*}/\agrad^2 \kc^*)^{m+1}
\to (\calk^*/d(\overline N^* \otimes \overline N^*))^m$
(\cf Remark \ref{rmk:exactpart}).

We see that $\princ : \bmspam{\tdga} \to H^m(\tdga)$ is the composition of
that isomorphism with the projection to $H^m(\tdga)$.
The kernel of the latter map is represented by elements of
$d(\wt \ccalg \otimes N^{\geq 3n-2})$, whose preimage in $\bmspam{\tdga}$
is contained in $\bmspakm{\tdga}$.
\medskip

\noindent
\ref{it:trunc_unique}
To describe the homomorphism $\tau : \tdga_1 \to \tdga_2$, we need to specify
its values on the generating spaces $V^i_1 \subset \tdga_1$.
The values on $C_1^i \subseteq V_1^i$ are determined by $G$.
Thus the only flexibility that remains for adjusting $\tau_\#^\vee \alpha_2$
is the restriction of $\tau$ to $N_1^*$.

We now claim that any closed element of $\tdga^m$ can be written as a sum of
products of generators such that at most one factor in each term is not closed.
The hypothesis that $m \leq 5n{-}3$ ensures that we can decompose
$\tdga^m$ as a direct sum of the degree $m$ parts of
$\wt \ccalg^m$, $N^* \otimes \wt \ccalg^*$ and $\gsym^2 N^*$
(in particular we do not need $\wt \ccalg^* \otimes \gsym^2 N^*$).
If $x \in \tdga^m$ has component
$\sum m_j \otimes n_j$ in $N^i \otimes N^{m-i}$, then the
$N^i \otimes \wt \ccalg^{m-i+1}$ component of $dx$ is $\sum m_j \otimes dn_j$.
That vanishes only if $\sum m_j \otimes n_j$ does, proving the claim.

For $2n -2 \leq j \leq k$, let $L_j$ be the closed subspace of
\[
%R_j :=
N^{2n-1} \otimes \clalg^{m-2n+1} \; \oplus \cdots \oplus \;
N^j \otimes \clalg^{m-j} \; \oplus \; N^{j+1} \otimes dN^{m-j-2} \; \oplus
\cdots \oplus \; N^k \otimes dN^{m-k-1} , \]
where as usual $\clalg^* \subseteq \tdga^*$ is the closed subalgebra.
%(terms $N^i \otimes dN^{m-i-1}$ with $i > m - 2n$ are zero).
So
\[ \calk^m = L_{2n-2} \subseteq L_{2n-1} \subseteq \cdots \subseteq L_k,\]
and $\wt \ccalg^m \oplus L_k = \clalg^m$.
Since the hypothesis means that $\tau_\#^\vee \alpha_2$
and $\alpha_1$ agree on $\wt \ccalg^m \oplus \calk$ (the first term represents
elements of $H_\trc{k{+}1}^m(\tdga)$, and the second
the image of $\princ$), the desired conclusion
follows by induction from the following claim.

\smallskip
{\narrower\noindent
Let $2n{-}1 \leq j \leq k$, and let
$\tau : \tdga_1 \to \tdga_2$ be an isomorphism.
Then there exists another isomorphism $\phi$ with
$\phi_\trc{j-1} = \tau_\trc{j-1}$
such that $\phi_\#^* \alpha_2$ and $\alpha_1$ agree on
$\wt \ccalg^m \oplus L_j$ if and only if $\tau_\#^\vee \alpha_2$ and $\alpha_1$
agree on $\wt \ccalg^m \oplus L_{j-1}$.\par}

\smallskip\noindent
Set $\phi_\kappa := \tau + \kappa$, for some linear map
$\kappa : N^j \to \clalg^j$. Let $W$ be a direct complement
%to $dN^{m-j-1}$ in $\clalg^{m-j}$ (so $W \cong H^{m-j}(\tdga)$).
%Then $R_j = R_{j-1} \oplus N^j \otimes W$, and
to $L_{j-1}$ in~$L_j$. Then $p \colon W \into N^j \otimes H^{m-j}(\tdga)$.
Now $(\phi_\kappa)_\#^\vee \alpha_2 = \tau_\#^\vee\alpha_2$ on $L_{j-1}$,
while on $W$ the difference equals the composition of $p$ with the linear map
$N_1^j \otimes H^{m-j}(\tdga_1) \to \Q, \; n \otimes x \mapsto
\alpha_2(\kappa(n)\tau_\# x)$. Because
$\tau_\# : H^{m-j}(\tdga_1) \to H^{m-j}(\tdga_2)$ and
$\alpha_2 \cap \colon H^{m-j}(\tdga_2) \to H^j(\tdga_2)^\vee$ are
isomorphisms, any linear functional on $N_1^j \otimes H^{m-j}(\tdga_1)$
is realised this way for some $\kappa \colon N^j_1 \to \clalg^j_2$.
Thus by adjusting the choice of $\kappa$, the restriction of
$(\phi_\kappa)_\#^\vee\alpha_2$ to $W$ can be made to equal $\alpha_1$.
\end{proof}

%Given $\tdga$ and a $k$-partial Poincar\'e class $\alpha \in H^m(\tdga)$,
%one may construct a Poincar\'e duality algebra $H^*$
%with $H^i(\tdga) \cong H^i$ for $i \leq k$ and
%$H^{k+1}(\tdga) \into H^{k+1}$, and so that the restriction of
%the Poincar\'e class $\alpha_H$ to the image of $(\Lambda H^{\leq k})^m$
%corresponds to the restriction of $\alpha$ to $(\Lambda H^{\leq k}(\tdga))^m$.

\begin{ex}
\label{ex:obstr_detail}
Suppose $H^*$ is an algebra with $x_1$, $x_2$, $x_3 \in H^n$
such that the products $x_1^2$, $x_2^2$, and $x_1x_2$ vanish in $H^{2n}$,
and $y_1 = x_1x_3$ and $y_2 = x_2x_3 \in H^{2n}$ are linearly
independent.
Then $b := (x_1y_1)x_2^2 - (y_1x_2)(x_2x_1) - (x_2y_2)x_1^2 + (x_1y_2)(x_1x_2)$
is a non-zero element of $\bmsp^{5n}_{3n-2}(H^*)$.

If a DGA $\tdga$ has $H^*(\tdga)$ of this form, then $\princ(b) = 0$.
For in the definition of $\princ$, choose the linear map
$\alpha : H^*(\tdga) \to \tdga^*$ such that
$\alpha(y_1) = \alpha(x_1)\alpha(x_3)$ and
$\alpha(y_2) = \alpha(x_2)\alpha(x_3)$.  We can then choose
$\gamma : \kc^* \to \tdga^{*-1}$ such that
$\gamma(x_1y_1) = \gamma(x_1^2)\alpha(x_3)$
and $\gamma(x_2y_1) = \gamma(x_1x_2)\alpha(x_3)$.
Then $\princ(b) \in H^{5n-1}(\tdga)$ is represented by
\[ \gamma(x_1y_1)\alpha(x_2)^2 - \gamma(y_1x_2)\alpha(x_2)\alpha(x_1)
- \gamma(x_1^2)\alpha(x_2)\alpha(y_2) + \alpha(x_1)\alpha(y_2)\gamma(x_1x_2)
= 0. \]
There exist \tkc{} $(5n{-}1)$-dimensional Poincar\'e algebras $H^*$ with the
above property, as can be seen for instance by applying
Lemma \ref{lem:partial}\ref{it:reconstr} to an algebra supported in degree $0$,
$n$ and~$2n$, with $\alpha = 0$ as a $(3n{-}2)$-partial Poincar\'e class of
degree $5n{-}1$. Thus the realisation statement of Theorem
\ref{thm:minalg}\ref{it:minexist} does not extend beyond $m \leq 5n{-}2$.
%Since there are \tkc{} $(5n{-}1)$-dimensional manifolds, hence
%Poincar\'e algebras $H^*$, with the above property, 
%the realisation statement of Theorem
%\ref{thm:minalg}\ref{it:minexist} does not extend beyond $m \leq 5n{-}2$.

Similarly, there exist \tkc{} minimal DGAs $\tdga$ generated in
degree $\leq 3n{-}1$ such that $H^*(\tdga)$ has the above property.
This demonstrates that if $m \geq 5n{-}1$, then for all $k \geq m{-}2n$
there can be a minimal DGA $\tdga$ generated in degree $\leq k$ such that
$\princ$ fails to be injective on $\bmspam{\tdga}$.
That is where our proof of Theorem \ref{thm:minalg}\ref{it:minexist}  
breaks down for $m \geq 5n{-}1$.
\end{ex}

From Lemma \ref{lem:trunc_bmp} we can now deduce a result that links up with
Proposition \ref{prop:PD}.

\begin{prop}
\label{prop:trunc}
Let $4n{-}1 \leq m \leq 5n{-}2$ and $k = m{-}2n$.
Then given an \tkc{} $m$-dimensional Poincar\'e duality algebra
$(\bar H^*, \bar \alpha)$
and a linear map $F : \bmspam{\bar H^*} \to \Q$, there exists
a minimal Sullivan algebra $\tdga$ generated in degree $\leq k$, with
\begin{itemize}
\item an injection $\phi : H^{\leq k+1}(\tdga) \into \bar H^{\leq k+1}$
that is an isomorphism in degree $\leq k$ and
\item a $k$-partial Poincar\'e duality class
$\alpha \in H^m(\tdga)$
\end{itemize}
such that
\begin{itemize}
\item the restriction of $\alpha$ to $H_\trc{k{+}1}^m(\tdga)$
is $\phi^\vee \bar \alpha$ (note that $\phi$ induces a well-defined map
$H_\trc{k{+}1}^*(\tdga) \to \bar H^*$,
because $\tdga$ is generated in degree $\leq k$), and
\item the \bmp{} $\princ : \bmspam{\tdga} \to H^m(\tdga)$ satisfies
$\alpha \circ \princ = F \circ \phi'$, 
where the map $\phi' : \bmspam{\tdga} \to \bmspam{\bar H}$ is the isomorphism induced by $\phi$.
\end{itemize}
Moreover, if $m \leq 5n{-}3$ then $\tdga$ is unique.
\end{prop}

\begin{proof}
If $m \leq 5n{-}3$, so that $k \leq 3n{-}3$, then it is easy construct
the unique minimal $\tdga$ generated in degree $\leq k$
with an injection $\phi : H^{\leq k+1}(\tdga) \into \bar H^{\leq k+1}$
that is an isomorphism in degree $\leq k$.
Then Lemma \ref{lem:trunc_bmp}\ref{it:trunc_exist} shows that
$\princ : \bmspam{\tdga} \to H^m(\tdga)$ is injective, with image
transverse to $H_\trc{k+1}^m(\tdga)$.  
Therefore we can define $\alpha \in H^m(\tdga)^\vee$ by setting
$\alpha = \bar \alpha \circ \phi$ on $H_\trc{k+1}^m(\tdga)$, and
$\alpha = F \circ \princ^{-1}$ on the image of $\princ$,
and extending arbitrarily to the rest of $H^m(\tdga)$.
That $\alpha$ is a $k$-partial Poincar\'e class follows from
Lemma \ref{lem:partial}\ref{it:restrict}.
The relevant uniqueness follows immediately from
Lemma \ref{lem:trunc_bmp}\ref{it:trunc_unique}.

It remains to prove the existence claim in the borderline case $m = 5n{-}2$,
and this requires a little more work.
While the generating spaces $V^i$ of $\tdga$ are determined by $\bar H^*$ in
degree $i < k = 3n{-}2$ (as in the proof of
Lemma \ref{lem:trunc_bmp}\ref{it:trunc_exist}), we must also describe
$V^{3n{-}2}$. $C^{3n{-}2}$ is just a direct complement to the image of
$(\gsym^2 C^*)^{3n-2} \to H^{3n{-}2}$ like before.

The closed subspace of $\tdga_\trc{3n{-}2}^{3n-1}$ is
$(\gsym^2 C^*)^{3n{-}1} \oplus T^{3n{-}1}$, where $T^{3n{-}1}$ is the closed
subspace of $C^n \otimes N^{2n{-}1}$. Then
$T^{3n{-}1} \cong \kersym{H^n \otimes E^{2n}}$, and
\[ C^{2n-1} \otimes T^{3n{-}1} \cong
H^{2n-1} \otimes \kersym{H^n \otimes E^{2n}}
\supseteq H^{2n-1} \otimes (H^n)^{\otimes 3} \cap
\kersym{E^{3n-1} \otimes E^{2n}} . \]
The RHS maps surjectively onto
$\bmspakd{5n-1}{H^*} \stackrel{\phi'}{\cong} \bmspakd{5n-1}{\bar H^*}$.
We can this restrict the given $F$ to  $\bmspakd{5n-1}{\bar H^*} \to \Q$,
pull back to
$H^{2n-1} \otimes (H^n)^{\otimes 3} \; \cap \; \kersym{E^{3n-1} \otimes E^{2n}}$
and then
extend arbitrarily to a map $\wt F : C^{2n-1} \otimes T^{3n-1} \to \Q$.
That is equivalent to an
$F' : T^{3n-1} \to (C^{2n-1})^\vee \cong \bar H^{3n{-}1}$,
characterised by $\wt F(c \otimes t) = \bar \alpha(\phi(c)F'(t))$.
On the other hand, the algebra induces a map
\mbox{$a : \gsym^2 C^* \to \bar H^*$}.
We set $d : N^{3n-2} \to \tdga_\trc{3n-2}^{3n-1}$ to map isomorphically to
the kernel of
\[ a + F' : (\gsym^2 C^*)^{3n{-}1} \oplus T^{3n{-}1} \to
\bar H^{3n{-}1} . \]

Having thus defined $\tdga$, 
set $\phi : H^{\leq 3n{-}1}(\tdga) \to \bar H^{\leq 3n{-}1}$ to be
induced by $a$ in degrees $\leq 3n{-}2$, and by $a + F'$ in degree $3n{-}1$.

Note that $\princ(\bmspakd{5n-1}{\tdga})$ can be represented by elements
of $C^{2n-1} \otimes T^{3n-1}$.
The choices of $d : N^{3n-2} \to \tdga_\trc{3n-2}^{3n-1}$ and $\phi$ ensure
that the restriction of $\bar \alpha \circ \phi$ to
$\princ(\bmspakd{5n-1}{\tdga})$ equals the restriction of $\wt F$, which
in turn equals $F \circ \princ^{-1}$ by construction. 
By Lemma \ref{lem:trunc_bmp}\ref{it:trunc_exist} the intersection
of $H_\trc{k+1}^m(\tdga)$ and the image of $\princ$
is contained in $\princ(\bmspakd{5n-1}{\tdga})$. 
We can therefore once more define $\alpha \in H^m(\tdga)^\vee$ by
setting $\alpha = \bar \alpha \circ \phi$ on $H_\trc{k+1}^m(\tdga)$, and
$\alpha = F \circ \princ^{-1}$ on the image of $\princ$ (and extending
arbitrarily).
\end{proof}

Combining Proposition \ref{prop:trunc} and Proposition \ref{prop:PD} completes
the proof of Theorem \ref{thm:minalg}.

\subsection{Minimal models and manifolds}
\label{subsec:models}

A \emph{minimal model} of a DGA $\dga$
is a minimal Sullivan algebra $\mnalg$ together
with a quasi-isomorphism $\qi : \mnalg \to \dga$,
\ie a DGA
homomorphism whose induced map $\qi_\# : H^*(\mnalg) \to H^*(\dga)$ is an
isomorphism.

Recall that a minimal model of a CW complex $X$ is a minimal model of
$\drpl(X)$ and that for simply-connected CW complexes with rational cohomology
of finite type every quasi-isomorphism of minimal models is realised by a
rational homotopy equivalence of spaces, see Sullivan \cite[\S 8]{sullivan77}
and F\'elix-Halperin \cite[Proposition 17.13]{felix01}.
Theorem \ref{thm:type} now follows directly from the following claim. 

\begin{cor}
\label{cor:minmodel}
Let $\dga$ and $\dga'$ be \tkc{} %$m$-dimensional
Poincar\'e DGAs of dimension $m \leq 5n{-}3$ ($n \geq 2$), with minimal models
$\qi : \mnalg \to \dga$ and $\qi' : \mnalg' \to \dga'$.
If $G : H^*(\dga) \to H^*(\dga')$ is an isomorphism of the cohomology rings
then there exists a DGA isomorphism
$\phi : \mnalg \to \mnalg'$ such that
$\qi_\# \circ \phi_\# = G \circ \qi'_\#$
%such that the diagram
%\[ \xymatrix{
%H^*(\mnalg) \ar[d]^{\qi_{\#}} \ar[r]^(0.45){\phi_\#}
%& H^*(\mnalg') \ar[d]^{\qi'_{\#}} \\
%H^*(\dga) \ar[r]^(0.475){G} & H^*(\dga')}  \]
%commutes if and only if the one below does.
if and only if the diagram below commutes.
\[ \xymatrix{
\bmspam{\dga} \ar[d]^{\princ} \ar[r]^{G}
& \bmspam{\dga'} \ar[d]^{\princ'} \\
H^m(\dga) \ar[r]^{G} & H^m(\dga')}  \]
\end{cor}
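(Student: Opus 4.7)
The plan is to apply Theorem \ref{thm:minalg}(ii) to the minimal models $\mnalg$ and $\mnalg'$ after transporting the given data across the quasi-isomorphisms $\qi$ and $\qi'$. The essential tool is the functoriality of $\princ$ noted immediately after Definition \ref{def:bmp}, applied to the quasi-isomorphisms themselves.

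First I would verify that $\mnalg$ and $\mnalg'$ satisfy the hypotheses of Theorem \ref{thm:minalg}, \ie that they are \tkc{} $(4n{-}1)$-dimensional Poincar\'e minimal Sullivan algebras. The Poincar\'e property is purely cohomological and is inherited from $\dga$ and $\dga'$ via the isomorphisms $\qi_\#$ and $\qi'_\#$. For the $(n{-}1)$-connectedness of $\mnalg$ as a DGA, standard minimal-model theory allows us to choose the generating space $V$ of $\mnalg = \Lambda V$ with $V^i = 0$ for $0 < i < n$; any product of generators then has total degree $\geq 2n$, which forces $\mnalg^i = 0$ in the range $0 < i < n$, and similarly for $\mnalg'$.

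Next, define the ring isomorphism $\widetilde G := (\qi'_\#)^{-1} \circ G \circ \qi_\# \colon H^*(\mnalg) \to H^*(\mnalg')$. Functoriality of $\princ$ applied to $\qi$ and $\qi'$ produces commutative naturality squares relating $\princ_\mnalg$ with $\princ_\dga$ and $\princ_{\mnalg'}$ with $\princ_{\dga'}$; since $\qi_\#$ and $\qi'_\#$ are ring isomorphisms they restrict to isomorphisms $\bmsp_n(\mnalg) \isom \bmsp_n(\dga)$ and $\bmsp_n(\mnalg') \isom \bmsp_n(\dga')$. Pasting these naturality squares on either side of the square in the statement, one sees that the $(\dga, \dga', G)$ square commutes if and only if the corresponding $(\mnalg, \mnalg', \widetilde G)$ square does.

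Theorem \ref{thm:minalg}(ii) then converts this into the existence of a DGA isomorphism $\phi \colon \mnalg \to \mnalg'$ with $\phi_\# = \widetilde G$, which by definition of $\widetilde G$ is equivalent to the intertwining relation $\qi'_\# \circ \phi_\# = G \circ \qi_\#$ asserted in the statement. The converse direction is immediate from functoriality of $\princ$ applied to $\phi$: any such $\phi$ makes the $(\mnalg, \mnalg', \widetilde G)$ square commute, and pasting with the naturality squares recovers commutativity of the $(\dga, \dga', G)$ square. The only mild obstacle is the connectivity adjustment in the first step; otherwise the argument is a diagram chase in which all of the genuine content is absorbed into Theorem \ref{thm:minalg}(ii).
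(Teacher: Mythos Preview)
Your proposal is correct and follows exactly the paper's approach: apply Theorem~\ref{thm:minalg}\ref{it:minunique} to the transported isomorphism $(\qi'_\#)^{-1} \circ G \circ \qi_\#$, using functoriality of $\princ$ under the quasi-isomorphisms to match the two commutativity conditions. You have also supplied the minor verification (that the minimal models can be taken \tkc{} as DGAs) that the paper leaves implicit, and you have silently corrected the typo in the intertwining relation, which should read $\qi'_\# \circ \phi_\# = G \circ \qi_\#$.
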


\begin{proof}
Apply Theorem \ref{thm:minalg}\ref{it:minunique}
to $(q'_\#)^{-1} \circ G \circ q_\#$.
\end{proof}

%\enlargethispage{1.5\baselineskip}
Next recall that a DGA $\dga$ is said to be \emph{formal} if there is a
quasi-isomorphism $\wh \qi : \mnalg \to (H^*(\dga), 0)$ from its minimal
model $\mnalg$---in other words, if $\dga$ and $(H^*(\dga), 0)$ have the same
minimal model.
A space $X$ is formal if its DGA of piecewise linear de Rham forms is.
The following proposition is therefore the algebraic formulation of Theorem
\ref{thm:obstruction}.

\begin{cor}
\label{cor:obstruction}
An \tkc{} %$m$-dimensional
Poincar\'e DGA $\dga$ of dimension $m \leq 5n{-}3$ is formal if and
only if the \bmp{} $\princ \colon \bmspam{\dga} \to H^m(\dga)$ is trivial.
\end{cor}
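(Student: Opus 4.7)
The plan is to combine the functoriality of the Bianchi--Massey tensor with the uniqueness statement in Theorem~\ref{thm:minalg}\ref{it:minunique}, using $(H^*(\dga),0)$ as the comparison DGA. The key observation is that the BMP is a quasi-isomorphism invariant: given a quasi-isomorphism $\qi \colon \mnalg \to \dga$, the induced map $\qi_\#$ is an isomorphism of cohomology algebras, hence of $\bmsp_n$, and the naturality square from the introduction identifies $\princ_\mnalg$ with $\princ_\dga$ under this identification.

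First I would verify that the BMP of the DGA $(H^*(\dga),0)$ is identically zero. With zero differential every cohomology class is closed with a canonical representative, so one may take the right inverse $\rp \colon H^n \to Z^n = H^n$ to be the identity. Then $\rp^2 \colon \esym^2 H^n \to H^{2n}$ is precisely the cup product $\csq$, which vanishes on $\kc_n$ by definition. Thus $\prp = 0$ is an admissible choice in Lemma~\ref{lem:principal}, making the cochain $\prp \symp \rp^2$ identically zero on $\pol^2 \kc_n$, and in particular on $\bmsp_n(H^*,0)$. Formality of $\dga$ means there is a chain of quasi-isomorphisms between $\dga$ and $(H^*(\dga),0)$; by the functoriality of $\princ$, this forces $\princ_\dga = 0$, proving the forward direction.

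For the converse, suppose $\princ_\dga = 0$ and let $\qi\colon \mnalg \to \dga$ be a minimal model. Again by functoriality, $\princ_\mnalg = 0$ under the isomorphism $\qi_\#$. The minimal model $\mnalg'$ of $(H^*(\dga),0)$ is an \tkc{} $(4n{-}1)$-dimensional Poincar\'e minimal Sullivan algebra (its cohomology is $H^*(\dga)$), and by the previous paragraph its BMP is also zero. Applying Theorem~\ref{thm:minalg}\ref{it:minunique} with $G = \qi_\#^{-1}$ (which trivially intertwines the two zero maps $\princ_\mnalg$ and $\princ_{\mnalg'}$), we obtain a DGA isomorphism $\phi\colon \mnalg' \to \mnalg$. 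Composing $\qi \circ \phi$ with the given quasi-isomorphism $\mnalg' \to (H^*(\dga),0)$ (or rather using $\phi$ directly as part of the zig-zag) exhibits $\mnalg$ as a minimal model of $(H^*(\dga),0)$ as well, so $\dga$ is formal.

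There is no serious obstacle: the content has essentially been packaged into Theorem~\ref{thm:minalg}\ref{it:minunique} already. The only point requiring attention is the computation that $\princ_{(H^*,0)} = 0$, which is immediate once one notes that the canonical choice $\rp = \mathrm{Id}$ makes $\rp^2$ vanish on $\kc_n$.
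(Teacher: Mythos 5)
Your proposal is correct and follows essentially the same route as the paper: the forward direction via functoriality of the Bianchi--Massey tensor applied to $(H^*(\dga),0)$, and the converse by comparing minimal models through the uniqueness statement Theorem~\ref{thm:minalg}\ref{it:minunique} (the paper invokes this via its thin wrapper, Corollary~\ref{cor:minmodel}, with $G$ the tautological isomorphism). Your explicit check that $\princ_{(H^*,0)}=0$ via $\rp=\mathrm{Id}$, $\prp=0$ is the computation the paper leaves implicit.
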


\begin{proof}
Since the DGA $(H^*(\dga), 0)$ has $\princ = 0$, the functoriality of the
\bmp{} implies that if $\dga$ is formal then its minimal model, and hence
$\dga$ itself, also have $\princ = 0$.

Conversely if $\princ = 0$ then we can let $\dga' := (H^*(\dga),0)$,
set $G \colon H^*(\dga) \to H^*(\dga')$ to be the tautological isomorphism and
apply Corollary \ref{cor:minmodel} to deduce that $\dga$ is formal.
\end{proof}

\begin{rmk}
\label{rmk:3formality}
Our reasoning has been guided by the notion of $k$-formality of
Fern\'andez and Muñoz~\cite{fernandez05}. They define $M$ to be
\emph{$k$-formal} if one can choose the generating set $V$ for a minimal
model $\mnalg$ and direct complements
$N^i$ to $C^i \subseteq V^i$ for $i \leq k$ so
that the cohomology of the ideal
%$I_{2n-1} := N^{< 2n} \mnalg^{< 2n} \subseteq \mnalg^{< 2n}$
$I_k := N^{\leq k} \mnalg_\trc{k} \subseteq \mnalg_\trc{k}$
maps trivially to $H^*(M)$.
According to \cite[Theorem 3.1]{fernandez05}, a
closed orientable manifold of dimension $\leq 2k{+}1$ is formal if and only if
it is $k$-formal---this can also be deduced from Proposition \ref{prop:PD}
(under the simplifying assumption of simple-connectedness).

For $m \leq 5n{-}3$ and $m{-}2n \leq k \leq 3n{-}3$, the algebraic
considerations in Lemma \ref{lem:trunc_bmp} essentially
identify the \bmp{} as a complete obstruction to $k$-formality for
closed \tkc{} $m$-manifolds. One could thus prove Theorem \ref{thm:obstruction}
more briefly by appealing to the results of Fern\'andez and Muñoz, but we have
set up the argument to prove the more general Theorem \ref{thm:type} too.
\end{rmk}

We now move from rational classification to the classification of
manifolds up to finite ambiguity. Specialising the results of
Kreck and Triantafillou \cite{kreck91a} to the context of this paper, we can
interpret their results in terms of the \bmp, and deduce the
following superficially stronger version of
Theorem \ref{thm:classification_up_to_finite}.

\begin{prop}
\label{prop:classification_up_to_finite}
For $n \geq 2$ and $5 \leq m \leq 5n{-}3$, closed \tkc{} $m$-manifolds $M$ are
classified up to finite ambiguity by the truncated integral cohomology ring
$H^{\leq \frac{m}{2}+1}(M;\Z)$, the cohomology algebra $H^*(M)$, rational
Pontrjagin classes $p_k(M) \in H^{4k}(M)$ and
the \bmp{} $\bmspam{M} \to H^m(M)$.
\end{prop}

\begin{proof}
Kreck and Triantafillou \cite[Theorem 2.2]{kreck91a} prove that the
diffeomorphism type of a closed simply connected $M$ of dimension $m \geq 5$
with formal $([\frac{m}{2}] {+} 1)$-skeleton is determined up to finite
ambiguity by the truncated cohomology ring $H^{\leq \frac{m}{2} + 1}(M;\Z)$,
the rational Pontrjagin classes,
and $\alpha_M \in H^m(\mnalg_\trc{\frac{m}{2}})^\vee$; here
$\mnalg_\trc{\frac{m}{2}}$ is the subalgebra of the minimal model of $M$
generated by elements of degree $\leq \frac{m}{2}$,
and $\alpha_M$ is the pull-back of $\int_M \in H^m(M)^\vee$
under $H^m(\mnalg_\trc{\frac{m}{2}}) \to H^m(M)$.
If $M$ is \tkc{} and $m \leq 5n{-}3$ then the $([\frac{m}{2}]{+}1)$-skeleton is
certainly formal, and Proposition \ref{prop:trunc} implies that $\alpha_M$ is
determined up to isomorphism by the cohomology algebra and the \bmp.
\end{proof}

\subsection{Rational realisation}
\label{subsec:realise}
In this subsection we prove Theorem \ref{thm:realisation} using
Theorem \ref{thm:minalg}\ref{it:minexist} and rational surgery, adapted to
the setting of \tkc{} manifolds.
When the dimension $m$ is not divisible by $4$, we can proceed
by making some minor adjustments to
Sullivan's proof of \cite[Theorem 13.2]{sullivan77}.
When $m=4k$ the most convenient statement of rational surgery for generalisation
to \tkc{} manifolds  is Barge's  \cite[Theorem 1]{barge76} and the best proof for these 
purposes is found in the PhD thesis of Su \cite{su09}.

We are given $(H^*, p_*, \princ)$, an $(n{-}1)$-connected $m$-dimensional
rational Poincar\'e duality algebra $H^*$ ($m \leq 5n{-}2$),
together with candidate Pontrjagin classes
$p_* \in H^{4*}$ and a linear map $\princ \colon \bmspam{H^*} \to H^{m}$
that is a candidate for the \bmp.
By Theorem \ref{thm:minalg}\ref{it:minexist}, there exists a Sullivan minimal
algebra $\mnalg$ with $H^*(\mnalg) = H^*$ and \bmp{} $\princ$.
By \mbox{\cite[\S 8]{sullivan77}} (see also \cite[Theorem 17.10]{felix01}), 
$\mnalg$ is realised by a rational space $X$
which is $(n{-}1)$-connected since $\mnalg$ is $(n{-}1)$-connected.
The cohomology classes $p_* \in H^{4*}(\mnalg) = H^{4*}(X)$ define
a map $p \colon X \to \Pi_{4i \geq n} K(\Q, 4i)$ to the indicated product of rational Eilenberg-MacLane spaces.
%spaces where we may assume $n \leq 4i \leq 3n-1$.
If $BO\an{n}$ denotes the $(n{-}1)$-connected cover of $BO$,
then the universal Pontrjagin classes on $BO\an{n}$ 
define a rational equivalence
$p\an{n} \colon BO\an{n} \to \Pi_{4i \geq n} K(\Q, 4i)$
and we let $Y$ be the %rational 
space in the following pullback square:
\[ \xymatrix{Y \ar[d] \ar[r] & BO\an{n} \ar[d]^{p\an{n}} \ar[d] \\
X \ar[r]^(0.275){p} & \Pi_{4i \geq n}K(\Q, 4i)  }  \]

We note that $Y \to X$ is a rational equivalence since $p\an{n}$ is a 
rational equivalence.
If we set $T$ to be the Thom space of the stable bundle over $Y$
induced by the map $Y \to BO\an{n}$, then the stable homotopy groups
of $T$ satisfy 
$\pi_m^s(T)\otimes \Q \cong H_m(Y) \cong H_m(X) = \Q$,
since $Y \to X$ is a rational equivalence.
% - update with Su here
We wish to find a closed smooth $m$-manifold $M$ together with a bundle map,
\[ 
\xymatrix{ 
\nu_M \ar[d] \ar[r]^{\ol f} &
\xi \ar[d] \\
M \ar[r]^f &
Y,
}
\]
where $\nu_M$ denotes the stable normal bundle of $M$,
$\xi$ is the stable bundle over $Y$ induced from the map $Y \to BO\an{n}$
and $f \colon M \to Y$ is of non-zero degree.  
The existence of $(M, f, \ol f)$ is automatic when
$m \neq 4k$ and when $m = 4k$, it is proven by Su
\cite[Lemma 3.2.2]{su09} for the case when $BO\an{n} = BSO$.
Specifically, Su shows that there is class in $x \in \pi_{4k}^s(T)$ such that normal maps
$(M, f, \ol f)$ obtained from $x$ via the Pontrjagin-Thom isomorphism
satisfy $f_*([M]) = \alpha$.
The argument there works just as well in our case as we have the correctly adapted
assumption $(\mathrm{iii})$ that the Pontrjagin numbers defined by $\alpha$ and $p_*$ are
those of a \tkc{} manifold.\footnote{We thank Jim Davis and Zhixu Su for
explaining this point to us.}
%
%The assumption (iii) on Pontrjagin numbers guarantees the existence of a correct class in the Thom space %MO<n>=Omega<n> to perform Thom Pontrjagin construction, so that the fundamental class of M is mapped to \alpha. 
%
Hence we have the desired normal map $(M, f, \ol f)$.
Since $Y$ is $(n{-}1)$-connected,
we may perform surgery below the middle dimension \cite[\S 1]{wall99} on 
$f \colon M \to Y$ to make $M$ $(n{-}1)$-connected.
We continue further with rational surgery as in the proof of \cite[Theorem 13.2]{sullivan77} to achieve that 
$f \colon M \to Y$ is a rational equivalence 
with $M$ still $(n{-}1)$-connected using the assumption when $m = 4k$, that the intersection form
defined by $\alpha$ is equivalent to a sum or squares.
Then $(H^*(M), p_*(M), \princ(M)) = (H^*(X), p_*, \princ)$,
proving Theorem \ref{thm:realisation}.

\section{Coboundaries and integrality}
\label{sec:coboundaries}

In this section we explain how to compute the \bmp{} of a closed \mbox{\tkc{}}
$m$-manifold $M$ if $M$ has a coboundary $W$ such that the restriction
homomorphism $H^*(W) \to H^*(M)$ is surjective in degree $\leq m{-}3n{+}1$.
We can use this to construct compact manifolds $W$ whose boundaries $M$ realise
a given cup-product structure and \bmp.

\subsection{Computing the \bmp{} via a coboundary}

Let $W$ be a compact \mbox{$(m{+}1)$-manifold} with boundary $M$.
We call $W$ a \emph{coboundary of $M$ over $H^{\leq s}$} if
the restriction map $j : H^*(W) \to H^*(M)$ is surjective in degrees $\leq s$.
Then we can pick a right inverse $r : H^{\leq s}(M) \to H^{\leq s}(W)$ of $j$.
We will denote $r$ by $x \mapsto \hat x$.

In what follows we will assume that $M$ is \tkc, and that $W$ is a coboundary
over $H^{\leq s}$ for $s := m{-}3n{+}1$.
Then $E^{\leq n+s}$ is contained in $\gsym^2 H^{\leq s}$.
Therefore composing $r$ with the cup product of $W$ induces a map
$E^{\leq n+s} \to H^{\leq n+s}(W), \; e \mapsto \hat e$. In fact, this takes
values in $H^*_0(W) \subseteq H^*(W)$, the image of 
the natural map $H^*(W,M) \to H^*(W)$.

The condition that $M$ is \tkc{} ensures in turn that
$\gsym^2 E^{\leq n+s}$ and $\gsym^2 E^*$ are equal in degree $\leq m{+}1$,
so in particular we obtain an induced map
$(\gsym^2 E)^{m+1} \to (\gsym^2 H^*_0(W))^{m+1}$.
The intersection form of $W$ is a well-defined map
$\lambda_W : (\gsym^2H^*_0(W))^{m+1} \to \Q$.
Let
\begin{equation}
\label{eq:aw}
A_W : (\gsym^2 E)^{m+1} \to \Q
\end{equation}
be the composition.
On the other hand, we identify the top degree part $\bmspam{M} \to H^m(M)$
of the \bmp{} with a linear map $\princ_M : \bmspam{M} \to \Q$ by composition
with the integration map of $M$.

\begin{lem}
\label{lem:coboundary}
Let $M$ be a closed \tkc{} $m$-manifold, and let $W$ be a coboundary of $M$
over $H^{\leq s}$ for $s := m{-}3n{+}1$.
Then the restriction of $A_W$ to $\bmspam{M}$ equals $\princ_M$.
%The restriction of
%$A : \pol^2 \kc_n \to \Q,
%\; \ke \, \ke' \mapsto - \lambda_W(\hat \ke, \hat \ke')$
%to $\bmsp_n(M)$ equals the \bmp{} $\princ$.
\end{lem}

\begin{proof}
Choose $\wh \rp : H^{\leq s}(M) \to \drpl^{\leq s}(W)$ so that
\[ \xymatrix{
H^{\leq s}(M) \ar@{^(->}[r]^{r} \ar@{_(->}[d]_{\rp} \ar[rd]^{\wh \rp} & H^{\leq s}(W) \\
\drpl^{\leq s}(M) & \drpl^{\leq s}(W) \ar@{->>}[l] \ar@{->>}[u] } . \]
commutes. For $e e' \in (\gsym^2E^{\leq s+ n})^{m+1}$, we then have that
$\wh \rp^2(\ke), \wh \rp^2(\ke') \in \drpl^*(W)$ are
representatives of $\hat \ke$ and $\hat \ke' \in H^*_0(W)$.
If $d\prp(\ke) = \rp^2(\ke)$ as
in the definition of $\princ$
and $\rho : W \to [0,1]$ is a cut-off function supported on a collar
neighbourhood of $M$ then $\wh \rp^2(\ke) - d(\rho \prp(\ke)) \in \drpl^*(W)$
represents a pre-image of $u \in H^*(W,M)$ of $\hat \ke$, and by definition
$\lambda_W(\hat \ke \hat \ke') = u\,\hat w$.
In the notation of \eqref{eq:form_products}, we can write this as 
\[ -A_W \; = \; \int_W (\wh \rp^2 - d(\rho \prp)) \symp \wh \rp.\]
Hence,
as maps $(\gsym^2 \kc)^{m+1} \to \Q$,
\[ \int_M \prp \symp \rp^2 \; = \;
\int_W d \big( \rho \prp \symp \wh \rp^2 \big)
\; = \; A_W
\; - \; \int_W \wh \rp^2 \symp \wh \rp^2 . \]
The last term factors through
$(\gsym^2 \kc)^{m+1} \to \gsym^4 H^*(M)$, so vanishes on $\bmspam{M}$,
while the restriction of the left hand side to $\bmspam{M}$
equals $\princ_M$ by definition.
\end{proof}

By \eqref{eq:recover} this also lets us compute Massey triple products using
the coboundary $W$, so Lemma~\ref{lem:coboundary} is a generalisation of
\cite[Proposition 3.2.6]{hepworth05}.

If the cup product $H^*(M) \times H^*(M) \to H^*(M)$ is trivial
in positive degrees (so $E^* = \gsym^2 H^{>0}(M)$), then the cup-square
$\gsym^2 H^{>0}(W) \to H^*_0(W)$ and the intersection form (together with $r$)
define a degree $m{+}1$ element of $\grad^2 \grad^2 H^{>0}(M)^\vee$.
In view of Remark \ref{rmk:defect}, Lemma \ref{lem:coboundary} means that the
\bmp{} measures the failure of that 4-tensor to be fully graded symmetric.
More generally, when $M$ bounds over $H^{\leq s}$
%has this type of coboundary, 
we could use the lemma %Lemma \ref{lem:coboundary}
as the definition of the \bmp, and deduce that it is independent of the choice
of coboundary from the full graded symmetry of the quadruple cup
product on $H^*$ of closed oriented manifolds.

The coboundary perspective is useful for understanding the relation between
the \bmp{} and cohomology with integer coefficients.
Recall that $\icsq : \gsym^2 H^*(M;\Z) \to H^*(M;\Z)$
is the integral cup-square map. Let $\ikc^*$ denote the kernel of
$\icsq$ modulo torsion (or equivalently, the pre-image of $\kc^*$
under the map $\gsym^2 H^*(M;\Z) \to \gsym^2 H^*(M)$), and
$\bmspa{M;\Z} = \kersym{\ikc^*}$, the kernel of
$\gsym^2 \ikc^* \to \gsym^4 H^n(M;\Z)$.
While it is hard to see how to define an integral version of the \bmp{}
in terms of singular cochains, we may obviously define the ``integral
restriction'' $\iprinc_M : \bmspam{M;\Z} \to \Q$ as the composition of
$\princ_M$ with $\bmspam{M;\Z} \to \bmspam{M}$, as we did in the introduction.

If $M = \del W$ and $H^{\leq s}(W; \Z) \to H^{\leq s}(M; \Z)$ is onto,
we shall say that $W$ is a coboundary over $H^{\leq s}(M; \Z)$.
If $M$ is \tkc{} of dimension $m$ and $s = m{-}3n{+}1$,
then we can relate the \bmp{} of $M$ to the torsion linking form of $M$,
%Let $b_M$ denote the torsion-linking form
%
\[ b_M \colon (\gsym^2TH^*(M;\Z))^{m+1} \to \Q/\Z, \]
by a straight-forward adaptation of Hepworth's argument relating $b_M$
to Massey triple products \cite[Proposition 3.1.6]{hepworth05}.
To describe this relationship we let
$B_M : (\gsym^2 \ikc^*)^{m+1} \to \Q/\Z$ be the homomorphism induced by
\begin{equation} \label{eq:B_def}
 \ikc^i \times \ikc^{m+1-i} \to \Q/\Z, \;
(e, e') \mapsto b_M(\icsq(e), \icsq(e')) . 
\end{equation}

\begin{cor} \label{cor:princ_and_b}
Let $W$ be a closed $(m{+}1)$-manifold with \tkc{} boundary $M$, 
which is a coboundary over $H^{\leq s}(M; \Z)$ (for $s = m+1-3n$).
%such that
%$H^n(W;\Z) \to H^n(M;\Z)$ is surjective. 
Pick a right inverse $\bar r : H^{\leq s}(M;\Z) \to H^{\leq s}(W;\Z)$,
and define $\bar A_W : (\gsym^2 \ikc)^{m+1} \to \Q$
analogously to $A_W$ in \eqref{eq:aw}. Then
\[ \bar A_W = B_M \mod \Z\]
and
% \label{eq:princ_and_b}
\[ \iprinc_M = B_M|_{\bmspam{M; \Z}} \mod \Z .\]
\end{cor}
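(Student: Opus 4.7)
The approach is to establish the first identity $\bar A_W \equiv B_M \bmod \Z$ by recognising it as an instance of the classical coboundary description of the torsion linking form, and then to deduce the second identity by restricting Lemma~\ref{lem:coboundary} to the integral subspace $\bmsp_n(M;\Z) \subseteq \bmsp_n(M)$.

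For the first identity, fix $e, e' \in \ikc_n$ and their lifts $\hat e, \hat e' \in H^{2n}(W;\Z)$ via $\bar r$. Since $e, e' \in \ikc_n$, the restrictions $\hat e|_M = \icsq(e)$ and $\hat e'|_M = \icsq(e')$ are torsion, say of orders dividing some integer $N$. Then $N\hat e$ lies in the kernel of $H^{2n}(W;\Z) \to H^{2n}(M;\Z)$, so by the long exact sequence of $(W,M)$ it lifts to a class $\tilde u \in H^{2n}(W,M;\Z)$. The rational class $u := \tfrac{1}{N}\tilde u \in H^{2n}(W, M;\Q)$ is a preimage of $\hat e \in H^{2n}_0(W)$, so
\[ \lambda_W(\hat e, \hat e') \; = \; \tfrac{1}{N}\langle \tilde u \cup \hat e', [W,M]\rangle. \]
By the classical coboundary description of the torsion linking form, the right hand side coincides, up to sign, with $b_M(\icsq(e), \icsq(e'))$ modulo $\Z$; that is,
\[ B_M(e \cdot e') \; \equiv \; -\lambda_W(\hat e, \hat e') \; = \; \bar A_W(e \cdot e') \pmod{\Z}, \]
which proves the first identity since $\pol^2 \ikc_n$ is generated by such symmetric products.

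For the second identity, the rational right inverse $r := \bar r \otimes \Q \colon H^n(M) \to H^n(W)$ induced by $\bar r$ allows Lemma~\ref{lem:coboundary} to be applied: the associated $A_W \colon \pol^2 \kc_n \to \Q$ restricts to $\princ_M$ on $\bmsp_n(M)$. Since $\bar A_W$ is by construction the restriction of this $A_W$ along $\pol^2 \ikc_n \hookrightarrow \pol^2 \kc_n$, further restriction to $\bmsp_n(M;\Z) \subseteq \bmsp_n(M)$ gives $\iprinc_M = \bar A_W|_{\bmsp_n(M;\Z)}$, which combined with the first identity yields $\iprinc_M \equiv B_M|_{\bmsp_n(M;\Z)} \pmod{\Z}$. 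The only substantive ingredient is the classical coboundary formula for the torsion linking form; the remainder is formal bookkeeping of the compatibility between the integral right inverse $\bar r$ and its rationalisation.
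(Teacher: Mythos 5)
Your proposal is correct and follows essentially the same route as the paper: the first identity is exactly the classical relation $b_M(x,y) = -\lambda_W(\bar x,\bar y) \bmod \Z$ for classes on $W$ restricting to torsion classes on $M$ (which the paper simply cites, from Alexander--Hamrick--Vick, while you sketch its standard derivation via a rational preimage in $H^{2n}(W,M)$), and the second identity then follows by restricting Lemma~\ref{lem:coboundary} (applied with $r = \bar r \otimes \Q$) to $\bmsp_n(M;\Z)$. No gaps.
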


\begin{proof}
For the first equality 
we recall that if $\bar x \in H^i(W;\Z)$ and $\bar y \in H^{m+1-i}(W;\Z)$
restrict to torsion classes $x, y \in H^*(M;\Z)$, then by
\cite[Theorem 2.1]{alexander76}
\begin{equation} \label{eq:b_and_lambda}
b_M(x,y) = -\lambda_W(\bar x, \bar y).
\end{equation}
The second equality follows immediately from the first together with
Lemma \ref{lem:coboundary}. 
\end{proof}

\begin{rmk}
\label{rmk:link}
If an \tkc{} $M$ admits any coboundary over $H^{\leq s}(M; \Z)$ 
then the mod $\Z$ reduction of $\iprinc_M : \bmspam{M;\Z} \to \Q$ is
determined by the torsion linking form---in particular, if $H^*(M;\Z)$ is
torsion-free then $\iprinc$ takes integer values.
Because of the non-commutativity of the cup product on singular cochains,
we do not see a reason for this claim to be true in the absence of such
a coboundary.
\end{rmk}

\subsection{Integral realisation}

We now turn to the problem of realising a prescribed integral restriction
of the \bmp. This problem seems quite complicated in general, but by
restricting attention to the critical case of \tkc{} \fkm s we can realise a
large class of \bmp s by boundaries of $4n$-manifolds.
% and so prove Theorem \ref{thm:realiseM}.

We focus on the following basic invariants of an \tkc{} \fkm{} $M$:
\begin{enumerate}
\item $H^{n*}(M;\Z) = H^0(M;\Z) \oplus H^n(M;\Z) \oplus H^{2n}(M;\Z)$,
the part of the cohomology ring supported in degree divisible by $n$
(capturing in particular the product 
$\icsq \colon \gsym^2H^{n}(M; \Z) \to H^{2n}(M; \Z)$);
\item the linking form, $b_M \colon TH^{2n}(M; \Z) \times TH^{2n}(M; \Z) \to \Q/\Z$;
\item the \bmp, $\iprinc_M \colon \bmspaf{M; \Z} \to \Q$.
\end{enumerate}
Note that the domain %$\bmspaf{M; \Z}$
of $\iprinc_M$ depends only on $H^{n*}(M;\Z)$:
it equals $\bmspaf{H^{n*}(M;\Z)} := \kersym{\pol^2 \ikc^{2n}}$, where
$\ikc^{2n} := \ker(\rho \circ \icsq)$ for
$\rho \colon H^{2n}(M;\Z) \to H^{2n}(M;\Z)/T$ the projection.

Hence we define a {\em linking model} as an algebraic model of $M$,
which is a triple 
\[  (H^{n*}, b, \iprinc) \]
where $H^{n*} = H^0 \oplus H^n \oplus H^{2n}$ is a graded ring with $H^n$
torsion-free,
$b \colon T \times T \to \Q/\Z$ is a nonsingular symmetric torsion form
on the torsion subgroup $T \subseteq H^{2n}$,
and $\iprinc \colon \bmspaf{H^{n*}} \to \Q$ is a homomorphism.
Recalling Corollary \ref{cor:princ_and_b}, we say that $\iprinc$ and $b$ are {\em compatible} if
\begin{equation} \label{eq:b_and_iprinc}
\iprinc = B|_{\bmspaf{H^{n*}}} \mod \Z,
\end{equation}
where $B \colon P^2\ikc^{2n} \to \Q/\Z$ is defined from $b$ as
in \eqref{eq:B_def}.
Our main realisation result is the following

\begin{thm}
\label{thm:realiseMM}
Let $n \geq 2$, and let $(H^{n*}, b, \iprinc)$ be a linking model with
compatible $b$ and $\princ$.
Then there exists some \tkc{} closed %oriented
$M^{4n-1}$ with an isomorphism $H^{n*}(M; \Z) \cong H^{n*}$ that identifies
$(b_M, \iprinc_M) = (b, \iprinc)$.
In addition, we may assume that $H^*(M)$ is concentrated
in degrees $\ast = 0, n, 2n{-}1, 2n, 3n{-}1$ and $4n{-}1$ and the same holds
for $H^*(M; \Z)$ when $n = 2, 4$ or $n$ is odd.
\end{thm}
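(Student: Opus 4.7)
The plan is to realise $M$ as the boundary of a compact $4n$-manifold $W$ that is a coboundary over $H^n(M;\Z)$ in the sense of Section~\ref{sec:coboundaries}, so that $b_M$ and $\iprinc_M$ can be computed from the cup and intersection forms on $W$ via Corollary~\ref{cor:princ_and_b}. This is the same overall strategy as for Theorem~\ref{thm:realiseM}, now refined to prescribe also the torsion of $H^{2n}(M;\Z)$ and the linking form~$b$.

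The construction of $W$ proceeds in two stages. First I take $W_0 := \natural_r (S^n \times D^{3n})$, the boundary-connected sum of $r := \mathrm{rank}(F)$ copies, which is $(n{-}1)$-connected with $H^n(W_0;\Z) \cong F$ and boundary $\#_r(S^n \times S^{3n-1})$. Then I obtain $W$ by attaching $2n$-handles to $W_0$ along framed embeddings $S^{2n-1} \hookrightarrow \del W_0$. The attaching data are chosen so that $H^n(W;\Z) \cong F$ remains, $H^{2n}(W;\Z)$ surjects onto $G$ with cup square $\esym^2 H^n(W;\Z) \to H^{2n}(W;\Z)$ realising $\icsq$, and the intersection form $\lambda_W$ on the image $H^{2n}_0(W)$ of $H^{2n}(W,M;\Z)$ induces $-\bar A_W = -\iprinc$ on $\bmsp_n(M;\Z)$. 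The compatibility \eqref{eq:b_and_iprinc} between $b$ and $\iprinc$ is exactly what guarantees that the linking form read off from $\lambda_W$ via \eqref{eq:b_and_lambda} agrees with~$b$.

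Having $W$ in hand, I verify the invariants of $M := \del W$ using the long exact sequence of $(W,M)$ together with Lefschetz duality $H^k(W,M;\Z) \cong H_{4n-k}(W;\Z)$; these determine the identifications $H^n(M;\Z) \cong F$, $H^{2n}(M;\Z) \cong G$ (with $b_M = b$) and $\icsq_M = \icsq$, after which $\iprinc_M = \iprinc$ follows directly from Corollary~\ref{cor:princ_and_b}. For the concentration assertion, using only a $0$-handle, $n$-handles and $2n$-handles in the construction of $W$ avoids creating rational cohomology outside degrees $\{0, n, 3n-1, 4n-1\}$ beyond what is forced by the data. The stronger integral concentration for $n = 2, 4$ or $n$ odd rests on the vanishing of certain framing/normal-bundle obstructions available precisely in these dimensions: for $n$ odd the self cup-square is automatically $2$-torsion, while for $n = 2, 4$ one can exploit Hopf-type $S^n$-bundles ($\bbc P^2$, $\bbh P^2$) to avoid spurious torsion in intermediate degrees.

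The main technical difficulty is realising both $\icsq$ and $\lambda_W$ by a coherent choice of $2n$-handle attaching maps in $\pi_{2n-1}(\del W_0)$: the compatibility hypothesis removes the algebraic obstruction, but the geometric realisation requires identifying which pairs of cup-square and intersection-form data can be simultaneously realised by framed attaching spheres. This should be addressable using standard surgery-theoretic techniques for highly-connected handlebodies, combined with an inductive argument on the ranks of $F$ and $G$ and an analysis of the torsion subgroup~$T$ via characteristic classes of a linking-form presentation.
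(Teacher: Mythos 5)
There is a genuine gap, in fact two. First, your construction of the coboundary---attaching $2n$-handles to $W_0 = \natural_r (S^n \times D^{3n})$---cannot realise an arbitrary cup square $\icsq \colon \esym^2 F \to G$ when $n$ is even and $n \neq 2, 4$ (or $8$): a manifold built this way retracts onto $(\vee_r S^n) \cup (\text{$2n$-cells})$, and the diagonal entries of the cup square of such a complex are Hopf invariants of the attaching maps, which by Adams' theorem are forced to be even unless $n \in \{2,4,8\}$. This is precisely why the paper does not start from a wedge of $n$-spheres in general: it builds $K_0$ from the $(2n{-}2)$-skeleton of $K(F^\vee, n)$ (with extra $(2n{-}1)$-cells), uses Whitehead's $\Gamma$-sequence and Decker's theorem to realise any $\icsq$ by attaching $2n$-cells, and then thickens via Wall's theory; the price is extra integral torsion, which is exactly why the integral concentration statement in the theorem is restricted to $n = 2, 4$ or $n$ odd. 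Your appeal to Hopf bundles only for the concentration claim misses that the realisability of $\icsq$ itself is obstructed in your setup.

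Second, the step you defer as ``addressable using standard surgery-theoretic techniques'' is where the actual content lies, and compatibility of $b$ and $\iprinc$ does not by itself deliver it. To get $H^{2n}(M;\Z) \cong G$ with linking form $b$ one needs Wall's theorem presenting $-b$ by a nondegenerate symmetric bilinear form $\lambda_2$ on a free group $G_2$, and then a careful choice of the free data for $W$: the paper takes $G_1 = G/T \oplus G_2 \oplus \ikc \oplus \ikc^\vee$ with $\icsq_1 = (\rho \circ \icsq,\, q \circ p,\, p,\, 0)$ and $\lambda_1 = 0 \oplus \lambda_2 \oplus \lambda_3$, the hyperbolic-type summand $\ikc \oplus \ikc^\vee$ being needed to keep $\ker(\icsq_M) = \ikc$ and $H^{2n}(M;\Z) \cong G$. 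Compatibility only pins down $\iprinc_M$ modulo $\Z$; to hit $\iprinc$ on the nose one must vary an \emph{even} symmetric form $\lambda_\ikc$ on $\ikc$ (evenness is forced by the handle construction, via Wall's realisation of even forms by handlebodies) and prove that even forms restrict surjectively onto $\bmspe[\ikc]^\vee$---a nontrivial linear-algebra point the paper settles by a mod $2$ argument. Your proposal addresses neither the evenness constraint nor this surjectivity, so the exact (integer-valued) realisation of $\iprinc$ is not established.
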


Before proving Theorem \ref{thm:realiseMM} we show how it implies
Theorem \ref{thm:realiseM} of the introduction, where we used simpler algebraic
models for $M$.
A {\em torsion free model} is a pair 
\[  (F^{n*}, \iprinc), \]
where $F^{n*} = F^0 \oplus F^n \oplus F^{2n}$ is a torsion-free ring and
$\iprinc \colon \bmspaf{F^{n*}} \to \Q$ is a homomorphism.
With this terminology, Theorem \ref{thm:realiseM} states that any torsion free
model can be realised by an \tkc{} \fkm.
Given a linking model $(H^{n*}, b, \iprinc)$, we obtain a torsion free model
$(H^{n*}/T, \iprinc)$, so Theorem \ref{thm:realiseM}
follows immediately from Theorem \ref{thm:realiseMM} and the following

\begin{lem} \label{lem:b_for_princ}
Given any torsion free model $(F^{n*}, \iprinc)$ there is a linking model 
$(H^{n*}, b, \iprinc)$ with
compatible $b$ and $\iprinc$ such that $F^{n*} = H^{n*}/T$.
\end{lem}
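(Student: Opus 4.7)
The plan is to construct $T$, $b$, and $\icsq'$ in three steps: extend the mod $\Z$ reduction of $\iprinc$ to a symmetric bilinear form on all of $\ikc$, realise it on a finite quotient $T_0$, and enlarge $T_0$ so the form becomes nonsingular.

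First, observe that $\ikc$ is free abelian (being a subgroup of the free group $\esym^2 F$), so $\pol^2 \ikc$ is finitely generated and free. Since $\Q/\Z$ is an injective $\Z$-module, the mod $\Z$ reduction of $\iprinc$ on $\bmspe[\ikc]$ extends to a homomorphism $\tilde B \colon \pol^2 \ikc \to \Q/\Z$, which corresponds to a symmetric bilinear form $\tilde b \colon \ikc \times \ikc \to \Q/\Z$. The image of $\tilde B$ is a finitely generated subgroup of the torsion group $\Q/\Z$, hence finite of some exponent $N > 0$; thus $\tilde b$ descends to a symmetric form $\bar b$ on the finite group $T_0 := \ikc/N\ikc$ via the quotient map $\tau_0 \colon \ikc \twoheadrightarrow T_0$.

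Second, embed $(T_0, \bar b)$ isometrically into a nonsingular torsion form by the hyperbolic-type construction
\[ T := T_0 \oplus \mathrm{Hom}(T_0, \Q/\Z), \qquad b\bigl((a, \varphi), (a', \varphi')\bigr) := \bar b(a, a') + \varphi(a') + \varphi'(a). \]
The form $b$ is symmetric, and a direct check shows that the induced adjoint $T \to \mathrm{Hom}(T, \Q/\Z)$ is a bijection (the block matrix $\left(\begin{smallmatrix} \bar b_\sharp & \mathrm{id} \\ \mathrm{id} & 0 \end{smallmatrix}\right)$ is invertible), so $b$ is nonsingular; the inclusion $a \mapsto (a, 0)$ pulls $b$ back to $\bar b$.

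Finally, set $G' := G \oplus T$, so that $T$ is the torsion subgroup of $G'$ and the projection $\rho \colon G' \twoheadrightarrow G$ has kernel $T$. Since $\esym^2 F / \ikc \hookrightarrow G$ is free, a splitting $\esym^2 F \cong \ikc \oplus C$ exists; extend $\tau_0$ by zero on $C$ to obtain $\tau \colon \esym^2 F \to T_0 \hookrightarrow T$, and define $\icsq' := (\icsq, \tau) \colon \esym^2 F \to G'$. Then $\rho \circ \icsq' = \icsq$ and $\ker(\rho \circ \icsq') = \ikc$, and for $e, e' \in \ikc$ one has $\icsq'(e) = (0, \tau_0(e)) \in T$, so
\[ b\bigl(\icsq'(e), \icsq'(e')\bigr) = \bar b\bigl(\tau_0(e), \tau_0(e')\bigr) = \tilde b(e, e'). \]
Hence the induced map $B \colon \pol^2 \ikc \to \Q/\Z$ equals $\tilde B$, and its restriction to $\bmspe[\ikc]$ recovers $\iprinc$ modulo $\Z$, establishing the compatibility condition \eqref{eq:b_and_iprinc}. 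The only subtle step is the nonsingular extension in the second move; the hyperbolic-type construction handles this uniformly without requiring any case analysis on the radical of $\bar b$.
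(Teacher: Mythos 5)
Your proof is correct and takes essentially the same route as the paper's: both reduce $\iprinc$ mod $\Z$ and extend it to a symmetric $\Q/\Z$-valued form on $\pol^2 \ikc$, push that form to a finite quotient of $\ikc$ (your $\ikc/N\ikc$ versus the paper's quotient by the radical of the form), make it nonsingular by the identical hyperbolic doubling $T_0 \oplus \mathrm{Hom}(T_0,\Q/\Z)$, and define $\icsq'$ through a splitting of $\ikc \subseteq \esym^2 F$ so that $\ker(\rho\circ\icsq')=\ikc$ and compatibility holds. The only cosmetic differences are that the paper extends over $\Q$ before reducing mod $\Z$ (you extend over $\Q/\Z$ directly) and uses the minimal finite quotient rather than $\ikc/N\ikc$; like the paper, you implicitly rely on $F$ being finitely generated so that this quotient is finite.
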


\begin{proof}
Choose any extension of $\iprinc \colon \bmspaf{F^{n*}} \to \Q$
to $\iprinc' \colon \pol^2 \ikc^{2n} \to \Q$
and let $B_0 \colon \pol^2 \ikc^{2n} \to \Q/\Z$
be the composition of $\iprinc'$ with the canonical surjection $\Q \to \Q/\Z$.
We define $R \subseteq \ikc^{2n}$ to be the radical of~$B_0$:
\[ R : = \{ r \in \ikc^{2n} \, | \, B_0(re) = 0~\forall e \in \ikc^{2n} \}. \]
We let $S \subseteq \ikc^{2n}/R$ be the torsion subgroup and fix a 
projection $\ikc^{2n}/R \to S$, which in turn defines a surjection
$\pi \colon \ikc^{2n} \to S$.  The map $\pi$ induces a surjection 
$\pol^2(\pi) \colon \pol^2 \ikc^{2n} \to \pol^2 S$ such that 
$B_0$ factors over $\pol^2(\pi)$; \ie $B_0$ induces a (possibly singular)
symmetric torsion form
\[ b_0 \colon \pol^2 S \to \Q/\Z . \]
Set $\wh S := {\rm Hom}(S, \Q/\Z)$ and define a nonsingular torsion form $b$ by
\[ b \colon (S \times \wh S) \times (S \times \wh S) \to \Q/\Z,
\quad
b\bigl((s_1, \sigma_1),(s_2, \sigma_2) \bigr) = b_0(s_1, s_2) + 
\sigma_1(s_2) + \sigma_2(s_1). \]
To finish the proof, let $p \colon \gsym^2 F^n \to \ikc^{2n}$ be any
projection, set $(T, b) := (S \oplus \wh S, b)$,
and let $H^n = F^n$ and $H^{2n} = F^{2n} \oplus T$, with product 
\[ \gsym^2 F^n \to H^{2n} \oplus T, 
\quad ff' \mapsto (\icsq(ff'), \pi \circ p (ff')).\]
It is clear that $F^{n*} = H^{n*}/T$ as rings and that $\iprinc$
and $b$ are compatible; \ie satisfy \eqref{eq:b_and_iprinc}.
\end{proof}

To prove Theorem \ref{thm:realiseMM},
we note that it follows directly from the next two lemmas.  
Lemma \ref{lem:realiseW} is a generalisation of results
of Schmitt \cite{schmitt02} and we defer its proof to the next subsection.

\begin{lem} \label{lem:realiseW}
Let $n \geq 2$, and let a connected torsion-free ring
$G^{n*} = G^0 \oplus G^n \oplus G^{2n}$
and an even symmetric bilinear form
$\lambda \colon G^{2n} \times G^{2n} \to \Z$ be given.
Then there is a compact \tkc{} %oriented
$4n$-manifold $W = W(G^{n*}, \lambda)$ with 
\tkc{} boundary such that the following hold:
\begin{enumerate}
\item $W$ has the homotopy type of an \tkc{} $4n$-dimensional CW complex;
\item $H^*(W)$ is concentrated in dimensions $0, n$ and $2n$;
\item $H^*(W; \Z)$ is concentrated in dimensions $0, n$ and $2n$ when $n = 2, 4$ or $n$ is odd;
\item $H^{n*}(W; \Z) = G^{n*}$;
\item $\lambda_W = \lambda$.
\end{enumerate}
\end{lem}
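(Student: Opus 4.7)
The plan is to build $W$ as a handlebody in two stages: first attach $k := \operatorname{rank} F$ trivial $n$-handles to $D^{4n}$ to obtain $W_0 \simeq \bigvee^k S^n$, so $W_0$ is a parallelizable \tkc{} $4n$-manifold with $H^n(W_0;\Z) \cong F$; then attach $l := \operatorname{rank} G_1$ $2n$-handles whose attaching data simultaneously realises $G_1$, $\icsq_1$, and $\lambda_1$.

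Fix bases $e_1,\dots,e_k$ of $F$ (identified with $H^n(W_0;\Z)$) and $f_1,\dots,f_l$ of $G_1$. The attaching classes are chosen via Hilton's decomposition of $\pi_{2n-1}(\bigvee^k S^n)$: for each $r$ I would choose an embedded framed sphere $\alpha_r \colon S^{2n-1} \hookrightarrow \partial W_0$ (embedded and pairwise disjoint by general position, since $2(2n-1) < 4n-1$ for $n\geq 2$) whose class in $\pi_{2n-1}(W_0)$ has coefficient of the Whitehead product $[e_i,e_j]$ equal to the $f_r$-component of $\icsq_1(e_i\cdot e_j)$, using the generator of $\pi_{2n-1}(S^n)$ to realise diagonal terms when $n$ is even. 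Let $W := W_0 \cup h_1 \cup\cdots\cup h_l$, where $h_r$ is the $2n$-handle attached along $\alpha_r$. The standard relation between Whitehead products and cup products in the cofibre then realises $e_i \cup e_j = \sum_r c_{ij}^r f_r$ in $H^{2n}(W;\Z)$; the long exact sequence of $(W,W_0)$, combined with $H^{2n-1}(W_0) = H^{2n}(W_0) = 0$, yields $H^n(W;\Z) \cong F$ and $H^{2n}(W;\Z) \cong G_1$; and surgery on $\partial W_0$ along the $\alpha_r$ preserves connectivity in degrees $< 2n-1$, so $\partial W$ is \tkc{}.

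To realise $\lambda_1$, I would adjust the embeddings and framings while preserving the classes in $\pi_{2n-1}(W_0)$. The off-diagonal entries $\lambda_W(f_r,f_s) = \operatorname{lk}(\alpha_r,\alpha_s)$ may be shifted by any integer via connect-summing $\alpha_r$ with meridians of $\alpha_s$, which are null-homotopic in $W_0$ and hence leave $\icsq_W$ unchanged. The diagonal entries depend on the framings of the $h_r$: changing the framing by a generator of $\pi_{2n-1}(SO(2n))$ shifts the self-intersection by twice an integer, and this is where the evenness hypothesis on $\lambda_1$ is used essentially. Conditions (i), (ii), and (iv)--(vii) follow since the handlebody has cells only in dimensions $0, n, 2n$; for (iii), when $n$ is odd $\esym^2 = \Lambda^2$ has no diagonal terms and no Hopf compositions are needed, while for $n = 2, 4$ the Hopf invariant is $\Z$-valued, so each diagonal cup square can be realised without introducing integer torsion.

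The main obstacle is the simultaneous realisation of $\icsq_1$ and $\lambda_1$: the cup square is controlled by the image in $\pi_{2n-1}(W_0)$, while the intersection form is controlled by the kernel of $\pi_{2n-1}(\partial W_0) \to \pi_{2n-1}(W_0)$ together with the framing torsor over $\pi_{2n-1}(SO(2n))$. Carefully separating these contributions---particularly the framing-dependence of self-intersections, where the evenness of $\lambda_1$ enters---is the key step and is the natural higher-dimensional extension of Schmitt's four-dimensional realisation construction.
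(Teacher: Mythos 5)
Your construction has a genuine gap at its core: it cannot work for even $n \notin \{2,4,8\}$, and the lemma is claimed for all $n \geq 2$. If $W_0 \simeq \vee^k S^n$, then the homotopy type of $W$ (hence its cup product structure) depends only on the classes of the attaching maps in $\pi_{2n-1}(\vee^k S^n)$, and in the Hilton decomposition the diagonal entries of $\icsq_W$ (the cup squares $e_i \cdot e_i$ evaluated against the handle classes) are exactly the Hopf invariants of the $\pi_{2n-1}(S^n)$-components. By Adams' Hopf invariant one theorem these are always even unless $n = 2, 4, 8$, so for instance $F = G_1 = \Z$ with $\icsq_1(e \cdot e)$ a generator is unrealisable from a wedge of $n$-spheres; your parenthetical ``using the generator of $\pi_{2n-1}(S^n)$ to realise diagonal terms when $n$ is even'' silently assumes a Hopf invariant one element. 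This is precisely why the paper abandons the wedge of spheres for general even $n$: it takes $K_0$ to be a finite complex built from the $(2n{-}2)$-skeleton of $K(F^\vee, n)$ (with cells attached to kill $H^{2n-2}$), and uses Whitehead's exact sequence, the $\Gamma$-groups and Decker's surjection $Q \colon H_{2n}(K(F^\vee,n);\Z) \to (\esym^2 F)^\vee$ to realise an arbitrary $\icsq_1$. The price is possible extra integral torsion, which is exactly why conclusion (iii) is only asserted for $n = 2, 4$ or $n$ odd while (ii) is rational; in your write-up the restriction in (iii) appears only as a remark about torsion, whereas in fact your whole construction breaks for the remaining even $n$.

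The second problem is that the step you yourself call ``the key step''---realising $\lambda_1$ while keeping the homotopy classes (hence $\icsq_W$) fixed---is not actually carried out, and the parity bookkeeping you sketch is off. A framing change shifts a self-intersection by the Euler number of a rank-$2n$ bundle over $S^{2n}$, which can be odd when $2n = 4, 8$ and is even otherwise; in the latter case you would also need to know that the base diagonal values attached to your chosen homotopy classes are already even, which you do not address. The paper decouples the two pieces of data differently, via Wall's thickening theory: the trivial thickening $W_0(K)$ of the $2n$-complex $K$ realising $\icsq_1$ is, by the suspension theorem, of the form $W_0'(K) \times D^1$ and so has identically zero intersection form; one then tubes the $2n$-handle attaching maps with those of a Wall handlebody $D^{4n} \cup (\text{$2n$-handles})$ realising the given even form $\lambda_1$, placed inside a disk $D^{4n-1} \subset \del W_0(K_0)$ disjoint from the original attaching region. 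This changes the intersection form from $0$ to $\lambda_1$ without changing the homotopy type, and the evenness of $\lambda_1$ enters through Wall's realisation theorem for handlebodies rather than through framing-parity counting. Your handle-by-handle adjustment is close in spirit to Schmitt's argument for $n = 2$ (which the paper cites) and could plausibly be completed in that range, but as written it neither covers general even $n$ nor completes the realisation of $\lambda_1$.
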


\begin{lem} \label{lem:realiseF}
Given a linking model $(F^{n*}, b, \iprinc)$,
we can find $(G^{n*}, \lambda)$ with $G^n = F^n$
such that the manifold
$W(G^{n*}, \lambda)$ of Lemma \ref{lem:realiseW}
has boundary $M$ with
\[ (H^{n*}(M; \Z), b_M, \iprinc_M) \cong (F^{n*}, b, \iprinc). \]
%$(H^n(M; \Z), H^{2n}(M; \Z), \icsq_M) = (F, G, \icsq)$ and \bmp{} $\iprinc_M = \iprinc_0$.
\end{lem}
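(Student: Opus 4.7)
The plan is to construct the data $(G_1, \icsq_1, \lambda_1)$ required by Lemma~\ref{lem:realiseW} so that the manifold $W := W(F, G_1, \icsq_1, \lambda_1)$ has boundary $M$ realising the prescribed linking model.

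My first step is to read off the cohomology of $M = \partial W$ from $W$. Since $W$ is homotopy equivalent to a \tkc{} $4n$-dimensional CW complex with $H^*(W;\Z)$ concentrated in degrees $0$, $n$ and $2n$ (in the nice cases of Lemma~\ref{lem:realiseW}(iii); the general case goes through rationally plus a separate torsion check), Poincaré--Lefschetz duality $H^k(W, M;\Z) \cong H_{4n-k}(W;\Z)$ makes $H^{n+1}(W, M;\Z)$ and $H^{2n+1}(W, M;\Z)$ vanish for $n \geq 2$. The long exact sequence of the pair then yields
\[
H^n(M;\Z) \cong H^n(W;\Z) = F, \qquad
H^{2n}(M;\Z) \cong \operatorname{coker}\!\big(\lambda_1^\#\colon H_{2n}(W;\Z)\to G_1\big),
\]
where $\lambda_1^\#$ is the adjoint of the intersection form. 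In particular the restriction $H^n(W;\Z)\to H^n(M;\Z)$ is an isomorphism, so $W$ is a coboundary over $H^n(M;\Z)$ and Lemma~\ref{lem:coboundary} and Corollary~\ref{cor:princ_and_b} become applicable.

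My second step is to build $(G_1, \icsq_1, \lambda_1)$. Extend $\iprinc$ to $\iprinc'\colon \pol^2 \ikc \to \Q$, choose a finite free presentation $0\to R\xrightarrow{j} P\to G\to 0$, a projection $p\colon \esym^2 F\twoheadrightarrow \ikc$ (which exists since $\ikc$ is pure in the free group $\esym^2 F$), and a lift $\widetilde{\icsq}\colon \esym^2 F\to P$ of $\icsq$. Set $G_1 := P\oplus \ikc^\vee$, and define $\icsq_1 := (\widetilde{\icsq},\, p^\dagger)$, where $p^\dagger(q)=\bigl(\ell\mapsto \ell(p(q))\bigr)\in \ikc^\vee$. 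Take $\lambda_1$ to be the orthogonal sum of (a) an even symmetric form on $P$ whose image is $j(R)$ and whose induced torsion form on $P/j(R)=G$ realises the linking form $b$ via \eqref{eq:b_and_lambda}, and (b) an even symmetric form on $\ikc^\vee$ which is unimodular after rationalisation and whose restriction to the image $\widehat{\ikc}\subseteq G_1$ of $\ikc$ (coming from $\icsq_1$) computes $-\iprinc'$ through the formula in Lemma~\ref{lem:coboundary}. Such a $\lambda_1$ exists by a direct linear-algebraic construction.

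My third step is to verify that $M$ realises the linking model. The identifications $H^n(M;\Z)\cong F$ and $H^{2n}(M;\Z)\cong G$ follow from the exact sequence computation of Step~1; naturality of the cup product under restriction gives $\icsq_M=\icsq$; Corollary~\ref{cor:princ_and_b} together with the design of $\lambda_1$ on $P$ gives $b_M=b$; and Lemma~\ref{lem:coboundary} together with the design of $\lambda_1$ on $\ikc^\vee$ gives $\iprinc_M=\iprinc$. The principal obstacle is the construction of $\lambda_1$: it must simultaneously produce the cokernel $G$, realise $b$ on the torsion of $G$, and encode $\iprinc$ via the coboundary formula. Since Corollary~\ref{cor:princ_and_b} already pins down $\iprinc_M\bmod\Z$ from $b_M$, these requirements would over-determine the problem were it not for the compatibility hypothesis \eqref{eq:b_and_iprinc}, which is exactly what guarantees that the two prescriptions for $\lambda_1$ agree modulo $\Z$ and hence can be promoted to a single even symmetric integral form.
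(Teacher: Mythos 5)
Your overall architecture does match the paper's: build $W$ from Lemma \ref{lem:realiseW} with a block-sum form $\lambda_1$, one block responsible for the torsion linking form and another for the Bianchi--Massey tensor, then read off the boundary invariants via the long exact sequence, \eqref{eq:b_and_lambda}, Lemma \ref{lem:coboundary} and Corollary \ref{cor:princ_and_b}, with the compatibility hypothesis \eqref{eq:b_and_iprinc} reconciling the mod $\Z$ ambiguity. But the two existence claims you dismiss as ``a direct linear-algebraic construction'' are precisely where the content lies, and as stated they fail. For block (a): realising a given nonsingular torsion form $b$ as the boundary linking form of an integral symmetric form is Wall's presentation theorem \cite[Theorem 6]{wall63}, not an elementary construction; worse, your order of quantifiers (fix an arbitrary free presentation $0\to R\to P\to G\to 0$ first, then ask for an even form on $P$ whose adjoint has the right image and discriminant form) is impossible in general --- already for $G=\Z/5$ with its rank-one presentation there is no even rank-one form whose adjoint has index-$5$ image. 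The paper instead lets Wall's theorem choose the presenting lattice $(G_2,\lambda_2)$ for $-b$ and adds a zero block for $G/T$.

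For block (b) there are two separate problems. First, an integral form on (a copy of) $\ikc$ induces an integer-valued functional on $\pol^2\ikc$, so it cannot ``compute $-\iprinc'$'' for a rational-valued $\iprinc$; the fractional part has to come from the $b$-block (this is exactly how compatibility is used in the paper), and what remains to be realised is only the integer-valued difference $\iprinc-\iprinc_2$ on $\bmspe[\ikc]$. Realising that difference is itself a nontrivial integral statement --- that restriction of functionals induced by \emph{even} symmetric forms on $\ikc$ surjects onto $\mathrm{Hom}(\bmspe[\ikc],\Z)$ --- which the paper proves by a mod $2$ argument (the annihilator of the even forms mod $2$ consists of squares, which meet $\bmspe$ trivially); your proposal never addresses it. Second, your auxiliary block is only ``unimodular after rationalisation'': a merely $\Q$-nondegenerate even form has a nontrivial discriminant group, which adds unwanted torsion to $H^{2n}(M;\Z)$ and perturbs $b_M$, destroying the identifications you claim in Step 3. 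Demanding genuine unimodularity instead would leave no freedom (even unimodular forms need not exist on a lattice of given rank). The paper's device for having it both ways is the form $\lambda_3$ on $\ikc\oplus\ikc^\vee$, which is even and unimodular for \emph{every} choice of the even parameter $\lambda_\ikc$, so the boundary cohomology and linking form are untouched while $\lambda_\ikc$ sweeps out all the needed functionals. Without these three inputs (Wall's theorem, the hyperbolic-type block, and the mod $2$ surjectivity argument) your proof does not go through.
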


\begin{proof}
By \cite[Theorem 6]{wall63} 
there is a nondegenerate symmetric bilinear form on a free abelian
group $G_2$ such that 
$\lambda_2 \colon G_2 \times G_2$ presents $-b$.  That is, there is a surjection
$\pi_2 \colon G_2 \to T$ such that 
\[ \lambda_2(y_1, y_2) = -b\bigl( \pi_2(y_1), \pi_2(y_2) \bigr) \mod \Z, \]
for all $(y_1, y_2) \in G_2 \times G_2$.
We now turn to $\icsq \colon \gsym^2 F^n \to F^{2n}$ and recall
that $\ikc^{2n} = \ker(\icsq \circ \rho) \subseteq \gsym^2 F^n$ is a summand.
We fix a projection $p \colon \gsym^2 F^n \to \ikc^{2n}$ and note that
$\icsq|_{\ikc^{2n}} \colon \ikc^{2n} \to T \subseteq F^{2n}$.
Since $\ikc^{2n}$ is free, we can choose a homomorphism
$q \colon \ikc^{2n} \to G_2$ such that
$\pi_2 \circ q = \rho \circ \icsq|_{\ikc^{2n}} \colon \ikc^{2n} \to T$.

To apply Lemma \ref{lem:realiseW}, we choose $(G^{n*}, \lambda_1)$ as follows:
\begin{enumerate}
\item Set $G^n = F^n$ and
$G^{2n} = F^{2n}/T \oplus G_2\oplus \ikc^{2n} \oplus (\ikc^{2n})^\vee$,
\item Define the product $\gsym^2 G^n \to G^{2n}$ by
$(\rho \circ \icsq, q \circ p, p, 0)$,
\item Define the symmetric bilinear form $\lambda$ by
$(G^{2n}, \lambda) = (G/T, 0) \oplus (G_2, \lambda_2) \oplus (\ikc^{2n} \oplus
(\ikc^{2n})^\vee, \lambda_3)$,
%\item $\lambda_1\bigl((x_1, y_1, e_1, \alpha_1), (x_2, y_2, e_2, \alpha_2)\bigr) = 
%\lambda_2(y_1, y_2) + \lambda_\ikc(e_1, e_2) + \alpha_1(e_2) + \epsilon \alpha_2(e_1)$,
\end{enumerate}
where $\lambda_3\bigl((e_1, \alpha_1), (e_2, \alpha_2)\bigr) = 
\lambda_\ikc(e_1, e_2) + \alpha_1(e_2) + \alpha_2(e_1)$,
for $(\lambda_\ikc)$ an even symmetric bilinear form on $\ikc^{2n}$
which we shall vary as needed.
From the exact sequence
\[ \dots \to H^{4n}(W, M; \Z) \to H^{4n}(W; \Z) \to H^{4n}(M; \Z) 
\to H^{4n+1}(W; \Z) \to \dots \]
and the fact that $H^{4n+1}(W; \Z) = 0$,
we have $H^{2n}(M; \Z) = F^{2n}/T \oplus T = F^{2n}$.
By \eqref{eq:b_and_lambda} $b_M = b$.
By Lemma \ref{lem:realiseW} $H^{n*}(W; \Z) = G^{n*}$,
and so $\ker(\gsym^2 H^n(W;\Z) \to H^{2n}(W; \Z)) = \ikc$ since it is the
intersection of the kernels of $\icsq$, $p$ and $q \circ p$.
It follows that $\icsq_M \colon \gsym^2 H^n(M; \Z) \to H^{2n}(M; \Z)$ 
is identified with $\icsq$.

It remains to determine $\iprinc_M$ and we do this using Lemma \ref{lem:coboundary}.
Note that the map $A$ that computes $\princ_M$ in Lemma \ref{lem:coboundary}
is simply the map $\pol^2 \kc \to \Q$ induced by $\lambda$.
%(it does not depend on the projection $p$ or the map $q$).
It follows that $\iprinc_M = \iprinc_2 + \iprinc_\ikc$ where $\iprinc_2$
is induced by $\lambda_2$ and $\iprinc_\ikc$ is induced by $\lambda_\ikc$.
By construction $\iprinc_M = \iprinc_2 \mod \Z$.
Hence it remains to show that $\lambda_\ikc$ can be chosen to realise
any integer-valued homomorphism $\iprinc \colon \bmspaf{M; \Z} \to \Z$,
and we do this in the following paragraphs.

Letting $\sym_0^2$ denote even symmetric bilinear forms, we thus want to
prove that the composition of
$\sym_0^2 (\ikc^{2n})^\vee \to (\pol^2 \ikc^{2n})^\vee$ with
restriction to $\bmspaf{H^*(M;\Z)}$ maps onto $\bmspaf{H^*(M;\Z)}^\vee$.
Given that there is an isomorphism
$\sym^2 \grad^2 H^n(M;\Z)^\vee \cong (\pol^2 \gsym^2 H^n(M;\Z))^\vee$,
it suffices to prove the surjectivity mod 2.

Now, the annihilator of $\sym_0^2 \grad^2 H^n(M;\Z_2)^\vee$ in
$\pol^2 \gsym^2 H^n(M;\Z_2)$ is the $\Z_2$-vector space of squares of elements
of $\gsym^2 H^n(M;\Z_2)$. That clearly intersects trivially with
$\bmspaf{M;\Z_2}$, since expanding the square of a non-zero element
$\gsym^2 H^n(M;\Z_2)$ to an element of $\gsym^4 H^n(M;\Z_2)$ can never give 0.
\end{proof}

\newcommand{\fgp}{G_1}

\subsection{Proof of Lemma \texorpdfstring{\ref{lem:realiseW}}{4.6}}

Our first step is to identify a finite \tkc{} $2n$-dimensional CW
complex $K(G^{n*})$ such that $H^0(K) \oplus H^n(K) \oplus H^{2n}(K)$
realises the prescribed ring~$G^{n*}$.
Let $F = G^n$, a free abelian group, and let $r$ be its rank.
Let $K(F^\vee, n)$ be the indicated Eilenberg-MacLane space, and
$K(F^\vee, n)^{(2n-2)}$ a $(2n{-}2)$-skeleton of $K(F^\vee, n)$.
Attach $b_{2n-2}(K(F^\vee, n)^{(2n-2)})$ $(2n{-}1)$-cells to kill
$H^{2n-2}(K(F^\vee, n)^{(2n-2)})$, calling the resulting complex~$K_0'$.
The space $K_0'$ has the rational homotopy type of $\vee_{i=1}^r S^n$ and
by the standard inductive construction of $K(F^\vee, n)$ as in \cite[Example 4.17]{hatcher02}
and Serre's Theorem on the homotopy groups of simply-connected finite CW complexes
\cite{serre53}, we may assume that $K_0'$ is a finite CW-complex.
We then set 
\[  K_0 :=  \left\{
\begin{array}{cc}
\vee_{i=1}^r S^n & \text{$n = 2, 4$ or $n$ odd,} \\
K_0' & \text{otherwise.}
\end{array}    \right. \]

\begin{lem} \label{lem:realise_lambda}
Let $G^{n*}$ be a torsion-free graded ring concentrated in degree 0, $n$
and $2n$, with $G^{2n}$ of rank $s$. 
Then for $i = 1, \dots, s$, there are maps $\phi_i\colon S^{2n-1} \to K_0$,
such for $\phi := \sqcup_{i=1}^s \phi_i$, the CW-complex
\[ K(G^{n*}) := K_0 \cup_\phi (\cup_{i=1}^s e^{2n}) \]
has $H^{n*}(K; \Z) = G^{n*}$.
\end{lem}

\begin{proof}
We give a proof that applies for both definitions of $K_0$.
We recall the $i$th-$\Gamma$-group of a finite simply-connected CW-complex $K$,
which is the group
\[ \Gamma_{i}(K) : = {\rm Im}\bigl( \pi_{i}(K^{(i-1)}) \to \pi_{i}(K^{(i)}) \bigr),  \]
where $K^{(i-1)} \to K^{(i)}$ is the inclusion of the $(i{-}1)$-skeleton of $K$
into the $i$-skeleton of $K$.  The $\Gamma$ groups lie in Whitehead's long
exact sequence 
\[ \dots \to H_{i+1}(K; \Z) \xra{b} \Gamma_i(K) \xra{i_*} \pi_i(K) \xra{\rho} H_i(K; \Z) \to \dots, \]
where $i_*$ is the obvious inclusion, $\rho$ is the Hurewicz homomorphism and $b$ is
a certain ``boundary homomorphism'': see \cite[Ch.\,2]{baues96}.
Hence for $K = K(F^\vee, n)$ we have
\[  b \colon H_{2n}(K(F^\vee, n); \Z) \cong \Gamma_{2n-1}(K(F^\vee, n)), \]
and by \cite[Theorem 3.4.3]{decker74} there is a natural surjective homomorphism 
\[ Q \colon H_{2n}(K(F^\vee, n); \Z) \to (\gsym^2 F)^\vee, \]
which is given by taking the cup squares of elements in $F = H^n(K(F^\vee, n); \Z)$
and evaluating against $H_{2n}(K(F^\vee, n); \Z)$.

Now consider $i_{0*} \colon \pi_{2n-1}(K_0) \to \Gamma_{2n-1}(K(F^\vee, n))$.
We claim that $Q \circ b^{-1} \circ i_{0*}$ is onto.  When $K_0 = K(F^\vee, n)^{(2n-2)}$,
we have that $i_{0*}$ is onto by definition and so $Q \circ b^{-1} \circ i_{0*}$ is
onto.
When $K_0 = \vee_{i=1}^r S^n$, we let $\{x_1, \dots, x_r\}$ be a basis for $F$.
For $i \neq j$, the element $[x_ix_j] \in \gsym^2 F$ can be realised by Whitehead products
$[\iota_i, \iota_j]$ where $\iota_k \colon S^n \to \vee_{i=1}^r S^n$ is the inclusion
of the $k$th summand.  When $n = 2, 4$, elements of the form $[x_i^2] \in \gsym^2 F$
can be realised by maps $\iota_i \circ h$, where $h \colon S^{2n-1} \to S^n$
has Hopf-invariant $1$.  For more details see \cite[Prosition 3.11]{schmitt02} 
in the case $n = 2$, the case $n = 4$ is analogous.
Hence we choose $\phi_i \in \pi_{2n-1}(K_0)$ such that 
\[ Q \circ b^{-1} \circ i_{0*}(\phi_i) = \icsq^\vee(y_i^\vee), \]
where $\icsq^\vee \colon (G^{2n})^\vee \to (\gsym^2 F)^\vee$ is the dual to the
product map $\icsq \colon \gsym^2 G^n \to G^{2n}$ and
$\{y_1^\vee, \dots, y_s^\vee\}$ is a basis for $(G^{2n})^\vee$.
By construction, we may then identify $H^{2n}(K(G^{n*}); \Z) = G^{2n}$ and
the cup product structure on $H^{n*}(K(G^{n*});\Z)$ is given by the ring
structure of $G^{n*}$. 
\end{proof}

When $n = 2$, the construction of the manifolds 
$W = W(G^{n*}, \lambda)$ follows
easily from the results of Schmitt \cite[\S 3]{schmitt02} which build on handlebody theory
and classical embedding results of Haefliger.  When $K_0 = K(F^\vee, n)^{(2n-2)}$, we may
have many layers of handles to attach, and it is convenient to use the theory 
of thickenings as developed by Wall \cite{wall66}.  We briefly recall the notion of a thickening:
Let $K$ be a simply-connected finite connected CW-complex. 
An $m$-thickening of $K$ is a pair $(W, \phi)$
where $W$ is a compact $m$-manifold with simply-connected boundary $\del W$ 
and $\phi \colon K \to W$ is a homotopy equivalence.
Since the map $\phi$ will be clear in our arguments from the discussion, 
we suppress it and from the notation and call $W$ a thickening of $K$.

Now let $(G^{n*}, \lambda)$ be as in the hypotheses of
Lemma \ref{lem:realiseW} and
apply Lemma \ref{lem:realise_lambda} 
to obtain the $2n$-complex $K = K(G^{n*})$.
By \cite[\S 3 Trivial thickening]{wall66}, there are unique $4n$-thickenings $W_0(K)$ of $K$ 
and $W_0(K_0)$ of $K_0$ which are compact submanifolds of $\R^{4n}$ and which are called
{\em trivial thickenings}.
Moreover, by \cite[Suspension Theorem]{wall66}, $W_0(K) \cong W'_0(K) \times D^1$,
where $W_0'(K) \subset \R^{4n-1}$ also thickens $K$.  
It follows that $\lambda_{W_0(K)} = 0$ is the trivial form.
By assumption, the required form $\lambda$ on 
$W(G^{n*}, \lambda)$ is an even form,
so it suffices to show how to modify the intersection form of $W_0(K)$ by any even form,
without changing the cup-product structure.

By construction, $W_0(K) = W_0(K_0) \cup_\phi(\cup_{i=1}^s h^{2n}_i)$ is obtained from the trivial thickening
of $K_0$ by attaching $s$ $2n$-handles $h^{2n}_i \cong D^{2n} \times D^{2n}$ along a framed embedding
\[ \phi 
%= \sqcup \phi_i 
\colon \coprod_{i=1}^s (D^{2n} \times S^{2n-1}) \hookrightarrow \del W_0(K_0),  \]
where we attach one handle for each element of a basis of $G^{2n}$, 
which we assume has rank~$s$.  
Now by \cite[Lemma 1]{wall62}, every even symmetric bilinear form $l$ is realised
as the intersection form of handlebody $W_l = D^{4n} \cup_{\phi_0}(\cup_{i=1}^s h^{2n}_{0j})$
which is obtained by attaching $2n$-handles $h^{2n}_{0i}$ along a framed embedding
\[ \phi_0 
%= \sqcup_{i=1}^s \phi_{0i} 
\colon \coprod_{i=1}^s  (D^{2n} \times S^{2n-1}) \hookrightarrow D^{4n-1} \subset S^{4n-1}. \]
We take $W_l$ to have the intersection form $(G^{2n}, \lambda)$.
% which is
%given by $l\big((x_1, \ke_1, f_1), (x_2, \ke_2, f_2)\big)
%= \lambda_\ikc(\ke_1, \ke_2) + f_1(\ke_2) + f_2(\ke_1)$.
Fixing an embedding $D^{4n-1} \hookrightarrow \del W_0(K_0)$
disjoint from ${\rm Im}(\phi)$, we then form the framed embedding
$\phi' = \phi + \phi_0$ by tubing together the components of $\phi$ and $\phi_0$.
We define
\[ W(G^{n*}, \lambda) : = W_0(K_0) \cup_{\phi'}(\cup_{i=1}^s h^{2n}_i) \]
to be the manifold obtained by attaching $2n$-handles to $W_0(K_0)$ along $\phi'$.
Since ${\rm Im}(\phi_0) \subset D^{4n-1}$, $W = W(G^{n*}, \lambda)$
has that same homotopy type as $W_0(K)$ and hence the same cup-product structure.
On the other hand, the intersection form of $W$ is identified with the intersection
form of $W_l$ which is the intersection form required for Lemma \ref{lem:realiseW}. 
This completes the proof of Lemma \ref{lem:realiseW}
(and hence of Theorem \ref{thm:realiseMM}).

\begin{rmk}
\label{rmk:5nW}
If we put aside the \bmp{} and focus on realising cohomology algebras with
certain features, then the ideas in the proof of Lemma \ref{lem:realiseW}
extend to higher dimensions.
For instance, suppose that $n \geq 2$ and $G^0 \oplus G^n \oplus G^{2n}$ is a
torsion-free graded ring, and that we wish to realise $G^{n*} \otimes \Q$
as $H^{\leq 2n}(M)$ of a closed \tkc{} $m$-manifold $M$ with $m \geq 4n{+}1$.
Then we can take $K$ as in Lemma \ref{lem:realise_lambda}, let
$W^m_0(K)$ be the trivial $m$-dimensional thickening of $K$,
and let $M : = \del(W^m_0(K) \times I) = W_0^m(K) \cup_{\Id} W_0^m(K)$.
Then $H^{*}(K; \Z) \cong H^{*}(W;\Z) \cong H^{*}(M; \Z)$ for
$\ast \leq m{-}2n{-}1$, so $H^{\leq 2n}(M) = G^{n*} \otimes \Q$.
\end{rmk}

\section{Applications to \texorpdfstring{\fkm s}{(4n-1)-manifolds}}
\label{sec:applications}

In this section we discuss applications of the \bmp.
We begin with the proof of
Theorem \ref{thm:lef} from the introduction, and then give examples
where the \bmp{} is non-trivial despite all Massey products vanishing
(or there not being any defined triple products at all).

We restrict our attention to \tkc{} \fkm s. Then the space $\bmspaf{M}$,
on which the significant components of the \bmp{} are defined,
involves only $H^n(M)$ and the kernel $\kc^{2n}$ of the cup product
$\calg^2 H^n(M) \to H^{2n}(M)$ (to make sense of the graded power~$\calg^2$, we
interpret $H^n(M)$ as a graded vector space concentrated in degree $n$).
The applications essentially reduce to understanding the details of
how $\bmspaf{M}$ depends on $\kc^{2n}$.

In the final section we %prove Corollary \ref{cor:1c7m_up_to_finite} and
briefly discuss the role of \bmp{} in the classification of
simply-connected spin 7-manifolds.

\subsection{Intrinsic formality and the hard Lefschetz property}

We now prove Theorem \ref{thm:lef}, on the intrinsic formality of closed
\tkc{} \fkm s with $b_3 \leq 3$ and a hard Lefschetz property.
In view of Corollary \ref{cor:intrinsic} it suffices to prove that
$\bmspaf{M} = 0$.
By Poincar\'e duality, the hard Lefschetz property is equivalent to
equivalent to $\ann \kc^{2n} \subseteq \gsym^2 H^n(M)^\vee$ containing a
non-degenerate bilinear form $q$. Hence Theorem \ref{thm:lef}
is a consequence of the following algebraic result.

\begin{prop}
\label{prop:lef}
Let $V$ be a graded vector space of dimension $\leq 3$, concentrated in
degree $n$. Let $\kc$ be a subspace of $\gsym^2 V$.
If $\ann \kc \subseteq \grad^2 V^\vee$ contains a non-degenerate element $q$,
then $\kersym{\pol^2\kc} = 0$.
\end{prop}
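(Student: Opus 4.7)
The plan is to reduce to the case $\kc = \ker q$ and show the corresponding symmetrisation map is injective, essentially invoking the vanishing of the Weyl tensor in dimension $\le 3$.

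Since $q \in \ann \kc$ iff $\kc \subseteq \ker q$ and $\bmspe[\cdot]$ is monotone in $\kc$, it suffices to prove $\bmspe[\ker q] = 0$. I would first dispose of degenerate situations: a nondegenerate $q \in \egrad^2 V^*$ requires either $\epsilon = +1$, or $\epsilon = -1$ together with $\dim V$ even and positive; in particular the case $\dim V = 3, \epsilon = -1$ is vacuous. For $\dim V \le 1$ one has $\bmspe(V) = 0$ by a direct dimension count, and for $\dim V = 2, \epsilon = -1$ a nondegenerate $q$ gives an isomorphism $q \colon \ext^2 V \to \Q$, forcing $\ker q = 0$.

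This leaves $\dim V \in \{2, 3\}$ with $\epsilon = +1$. Now $\ker q \subset \pol^2 V$ is the space of $q$-trace-free symmetric $2$-tensors on $V$, and $\bmspp[\ker q] = \ker \mu$ for $\mu \colon \pol^2 \ker q \to \pol^4 V$ the restriction of the symmetrisation map, which is just polynomial multiplication. For $\dim V = 2$, taking the basis $e_1^2 - e_2^2, \, e_1 e_2$ of $\ker q$, the three elements $(e_1^2 - e_2^2) \cdot (e_1^2 - e_2^2)$, $(e_1 e_2) \cdot (e_1 e_2)$, $(e_1^2 - e_2^2) \cdot (e_1 e_2)$ of $\pol^2 \ker q$ map under $\mu$ to $e_1^4 - 2 e_1^2 e_2^2 + e_2^4$, $e_1^2 e_2^2$, and $e_1^3 e_2 - e_1 e_2^3$ respectively, which are linearly independent in $\pol^4 V$, so $\mu$ is injective.

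For $\dim V = 3, \epsilon = +1$, one has $\dim \pol^2 \ker q = 15 = \dim \pol^4 V$, so it is enough to show surjectivity of $\mu$. The map is equivariant for $SO(q) \cong SO(3)$, and both sides decompose multiplicity-freely into three irreducible summands of dimensions $1, 5, 9$: the target by the standard spherical-harmonic decomposition, and the source because $\ker q$ is the five-dimensional spin-$2$ representation whose symmetric square has this Clebsch--Gordan decomposition. By Schur's lemma, surjectivity then reduces to showing $\mu$ is nonzero on each of the three isotypic components, which is a brief explicit check (for instance, products of highest-weight vectors in the complexification give a nonzero highest-weight vector in the $9$-dimensional piece, and for $p = 2 e_1^2 - e_2^2 - e_3^2 \in \ker q$ one verifies that $p \cdot p \in \pol^4 V$ has nontrivial scalar component). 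The main obstacle is setting up this representation-theoretic decomposition cleanly; dually, this last step is the classical fact that the Weyl curvature vanishes in dimension $3$, as foreshadowed in the remark preceding the proposition.
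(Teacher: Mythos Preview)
Your argument is correct, but the route differs from the paper's. The paper passes to the dual picture: by the duality of the sequences \eqref{eq:dualseq}, $\bmspe[\kc] = 0$ is equivalent to surjectivity of the Kulkarni-Nomizu map $\ann\kc \owedge \egrad^2 V^* \to \mdualsp(V^*)$, and then for $\epsilon = +1$ it simply cites Besse \cite[1.119]{besse87} for the fact that $q \owedge \sym^2 V^* = \riem(V^*)$ when $\dim V \le 3$. You instead stay on the primal side, identifying $\bmspp[\ker q]$ with the kernel of polynomial multiplication $\pol^2(\ker q) \to \pol^4 V$ and proving injectivity by an explicit $SO(3)$-isotypic decomposition. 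Both approaches encode the same Riemannian fact (vanishing of the Weyl part in dimension~$\le 3$), but yours is more self-contained while the paper's is shorter by outsourcing the key step. Two small points worth tightening: your choice of basis for $\ker q$ and the use of $SO(3)$ representation theory tacitly assume $q$ is in standard form, so you should either note that injectivity is preserved and reflected under base change to $\mathbb{C}$ (which you already invoke for highest-weight vectors), or diagonalise $q$ over $\Q$ and carry the coefficients; and you give explicit witnesses only for the $V_0$ and $V_4$ isotypic pieces, leaving $V_2$ implicit in the phrase ``brief explicit check''.
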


\begin{proof}
It is convenient to consider the dual picture. By the duality of the sequences
\eqref{eq:dualseq}, $\kersym{\gsym^2\kc} = 0$ if and only if the restriction of
$\ksmap : \sym^2 \grad^2 V^\vee \to \kerasym{\sym^2\agrad^2V^\vee}$ to
$\ann \pol^2 \kc \subseteq \sym^2 \grad^2 V^\vee$ is surjective.
In terms of the Kulkarni-Nomizu product $\owedge$ described in
Remark \ref{rmk:kn},
the image $\ksmap(\ann \pol^2 \kc)$ is $\ann \kc \owedge \grad^2 V^\vee$.

The case when $n$ is odd is essentially trivial, because then $q$ is a
symplectic form on $V$ and so $\dim V = 0$ or 2.
If $\dim V = 2$ then $\kerasym{\sym^2\sym^2V^\vee}$ is one-dimensional, and it
is easy to
see that $q \owedge q$ is non-zero. In particular
$q \owedge \alt^2 V^\vee = \kerasym{\sym^2\sym^2V^\vee}$, so
$\kersym{\pol^2\kc} = 0$.

For the case when $n$ is even, Besse \cite[1.119]{besse87} explains that
$q \owedge \sym^2 V^\vee$ is all of $\kerasym{\sym^2\alt^2V^\vee}$ for
$\dim V \leq 3$; this is the same algebraic result that leads to the well-known
fact from Riemannian geometry that the Riemann curvature is determined by the
Ricci curvature in dimension $\leq 3$.
\end{proof}

\subsection{\bmp s without Massey products}
\label{subsec:nonordinary}

Let us now consider the question of when the \bmp{} can be non-trivial even
though all Massey triple products vanish. According to Lemma \ref{lem:recover},
Massey triple products on a closed oriented $m$-manifold $M$
correspond to evaluating the \bmp{} on elements of $\bmspam{M}$
that are ordinary in the sense of Definition \ref{def:ordinary}.
%~\eqref{eq:nongen}.
Therefore the algebraic version of the question is whether there exist
$m$-dimensional Poincar\'e duality algebras $H^*$ where 
$\bmspam{H^*}$ is not generated by ordinary elements.

As above, we restrict to the case when $M$ is \tkc{} of dimension $4n{-}1$.
Then $\bmspaf{M} = \kersym{\pol^2 \kc^{2n}}$, so the question boils down to
whether for a vector space $V$ there is a subspace $\kc$ of $\pol^2 V$ or
$\ext^2 V$ (according to whether $n$ is even or odd) such that
$\kersym{\pol^2 \kc}$ is not generated by ordinary elements.

Let us begin with the case when $n$ is even. Example \ref{ex:rk5} gives an
example where $r := \dim V = 5$, and $\kersym{\pol^2 \kc}$ is non-trivial even
though it contains \emph{no} ordinary elements at all (giving rise to
\tkc{} \fkm s where any Massey triple products that are defined are forced
to vanish due to the symmetries---\eg ones of the form $\gen{x,y,x}$---but
can still have a non-trivial \bmp).
We find it helpful to first present a dimension-counting argument to show that
this is not an uncommon phenomenon when $r > 5$.

\begin{lem}
\label{lem:strict}
For any $r := \dim V \geq 6$ there exist $\kc \subseteq \pol^2 V$ such that
$\kersym{\pol^2 \kc}$ is non-trivial but contains no ordinary elements.
\end{lem}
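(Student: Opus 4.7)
The plan is to combine a geometric characterisation of ordinary elements with a dimension count on the Grassmannian $G(d, \pol^2 V)$.

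First I would observe that given $x, y, z, w \in V$, the putative ordinary element $v = (xz)(yw) - (xw)(yz)$ lies in $\pol^2 \kc$ precisely when $xz, xw, yz, yw \in \kc$, which is equivalent to $XY \subseteq \kc$ for the planes $X := \langle x, y\rangle$ and $Y := \langle z, w\rangle$, where $XY$ denotes the image of $X \otimes Y \to \pol^2 V$. Since $\pol^* V$ is a polynomial algebra, multiplication by a non-zero element of $V$ is injective, and from this one checks that $v = 0$ in $\pol^2 \pol^2 V$ forces $x, y$ or $z, w$ to be linearly dependent. Hence non-zero ordinary elements of $\bmspp[\kc]$ correspond precisely to pairs of genuine $2$-planes $(X, Y) \in G(2, V) \times G(2, V)$ with $XY \subseteq \kc$.

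Setting $N := \binom{r+1}{2} = \dim \pol^2 V$, I would then estimate the bad locus $\mathcal{D} \subseteq G(d, \pol^2 V)$ of subspaces $\kc$ containing some such $XY$. The subspace $XY \subseteq \pol^2 V$ has dimension $4$ when $X \neq Y$ and dimension $3$ when $X = Y$, so the incidence variety in $G(d, \pol^2 V) \times G(2, V) \times G(2, V)$ has dimension at most $4(r-2) + (d-4)(N-d)$ off the diagonal and $2(r-2) + (d-3)(N-d)$ on it. Its image in $G(d, \pol^2 V)$ of dimension $d(N-d)$ is therefore a proper subvariety provided $d \leq \binom{r}{2} + 1$, the binding constraint coming from the off-diagonal stratum. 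Independently, subadditivity of codimension inside $\pol^2 \pol^2 V$ gives $\dim \bmspp[\kc] \geq d(d+1)/2 - \binom{r+3}{4}$ for every $\kc$, so $\bmspp[\kc] \neq 0$ as soon as $d(d+1) > 2\binom{r+3}{4}$.

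It remains to verify that these two admissible ranges for $d$ overlap for every $r \geq 6$. At $r = 6$ we have $\binom{r}{2} + 1 = 16$ and $2\binom{r+3}{4} = 252$, and the single value $d = 16$ satisfies both bounds (since $16 \cdot 17 = 272 > 252$); for $r \geq 7$ the intervals widen. Any $\kc$ of the admissible dimension lying outside $\mathcal{D}$ then gives the required example. The principal delicate point is the tightness at $r = 6$, where the upper bound on $d$ from avoiding ordinary elements and the lower bound ensuring $\bmspp[\kc] \neq 0$ just meet; one has to track strict versus non-strict inequalities carefully and confirm separately that the diagonal stratum $X = Y$, where $XY$ has dimension $3$ rather than $4$, does not impose a tighter constraint than the off-diagonal one.
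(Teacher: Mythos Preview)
Your argument is correct and follows essentially the same route as the paper: characterise ordinary elements via pairs of $2$-planes $X, Y$ with $XY \subseteq \kc$, run an incidence-variety dimension count to show that a generic $\kc$ of dimension $d \leq \binom{r}{2}+1$ contains no such $XY$, and then check that at $d = \binom{r}{2}+1$ the inequality $\dim \pol^2 \kc > \dim \pol^4 V$ forces $\bmspp[\kc] \neq 0$ for all $r \geq 6$. Your treatment is in fact slightly more explicit than the paper's on why a non-zero ordinary element forces both $X$ and $Y$ to be genuine $2$-planes and on the strictness of the numerical inequality at the borderline $r=6$.
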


\begin{proof}
Note that the condition that $\kersym{\pol^2 \kc}$ contain an ordinary element
means that there are some 2-planes $A, B \subseteq V$ such that
$A \poltp B := \{ x \poltp y \mid x \in A, y \in B \} \subseteq \pol^2 V$ is
contained in~$\kc$. Let us first consider the case when
$A \not= B$, so that $A \poltp B$ has dimension 4 rather than 3.
$Gr_2(V) \times Gr_2(V)$ has dimension
$4r{-} 8$, and the space of $k$-planes $\kc \subseteq \pol^2 V$ containing a
fixed $A \poltp B$ has dimension $(k{-}4)\left({r+1 \choose 2} - k \right)$.
So the space of $k$-planes containing some $A \poltp B$ with $A \not= B$
has positive codimension in $Gr_k(\pol^2 V)$ if
\[ (k-4)\left({r+1 \choose 2} - k \right) + 4r - 8
\; < \; k\left({r+1 \choose 2} - k\right) \]
which reduces to
\[ r-2 \; < \; {r+1 \choose 2} - k . \]
Similarly, for the the space of $k$-planes containing some $\pol^2 A$ to have
positive codimension in $Gr_k(\pol^2 V)$ reduces to
\[ 2r-4 \; < \; 3 \left({r+1 \choose 2} - k\right) , \]
which is weaker than the above condition. Thus if $k \leq {r \choose 2} + 1$
and $\kc \in Gr_k(\pol^2 V)$ is generic then $\kersym{\pol^2 \kc}$ contains no
ordinary elements. 
Now for $k := {r \choose 2} + 1$ and $r \geq 6$
\[
\dim \pol^2 \kc \; = \;
{{{r \choose 2} + 2} \choose 2} \; \geq \;
{{k+3} \choose 4} \; = \; \dim \pol^4 V.
\]
Hence in this case any $\kc \in Gr_k(\pol^2 V)$ has $\kersym{\pol^2 \kc}$
non-trivial.
\end{proof}

When $k = {r \choose 2} + 2$, \ie when $\kc$ has codimension $r-2$, the
expected codimension of the space of $k$-planes $\kc \subset \pol^2 V$
containing a fixed $A \poltp B$ equals the dimension of
$Gr_2(V) \times Gr_2(V)$, and we would expect each $\kc$ to contain a finite
number of $A \poltp B$. We can turn this round as follows.

For each $(A, B) \in Gr_2(V) \times Gr_2(V)$ and
$q \in \ann \kc \subset \pol^2 V^*$, the condition that
$A \poltp B \subseteq \kc$
implies that $A$ and $B$ are orthogonal with respect to the bilinear form $q$,
which imposes 4 constraints on $(A,B)$. Now recall that mapping an
oriented 2-plane with orthonormal basis $x, y$ (w.r.t. some standard inner
product) to $\gen{x+iy} \in \PP(V \otimes \C)$ embeds
$\ogr_2(V) \into \CP^{r-1}$ with
image the quadric $Q := \{ z : \sum z_i^2 = 0 \}$. That $A$ and $B$ are
$q$-orthogonal translates to the condition that the image of $(A,B)$ in
$\CP^{r-1} \times \CP^{r-1}$ lies in the subset of $Q \times Q$ cut out by
$q(z,z) = q(z, \bar z) = 0$. These correspond to sections of the line bundles
$\calo(1, 1)$ and $\calo(1, -1)$ respectively (but only the first is
holomorphic).

Writing $c_1$ and $c_2$ for the generators of the $H^2$ of the
two $\CP^{r-1}$ factors, we see that the topological intersection number of $Q
\times Q$ with $r-2$ such subsets is the $(c_1c_2)^{r-1}$ coefficient of
${(2c_1)(2c_2)}(c_1 {+} c_2)^{r-2}(c_1 {-} c_2)^{r-2}$, which equals
$\pm 4{{r - 2} \choose {\frac{r}{2} - 1}}$ when $r$ is even, and vanishes when
$r$ is odd. This counts each ordered pair of unoriented 2-planes $(A,B)$ with
$A \poltp B \subseteq \kc$ 4 times, and possibly with some cancelling signs.

\begin{itemize}
\item For $r = 3$ we expect that a generic codimension 1 subspace
$\kc \subset \pol^2 V$ should contain no $A \poltp B$. Indeed, if the generator
$q \in \ann \kc$ is non-degenerate then there can obviously be no
$q$-orthogonal 2-planes (and moreover we already argued in Proposition 
\ref{prop:lef} that $\kersym{\pol^2 \kc} = 0$ in that case). 

\item For $r = 4$ we expect that for any $\kc \subset \pol^2 V$ of
codimension 2, there should be at least two ordered pairs $(A,B)$ of
unoriented 2-planes such that $A \poltp B$ is contained in $\kc$
(and possibly only one unordered pair).
If $\ann \kc$ is spanned by two non-degenerate elements, then we could also see
this by applying the Lefschetz fixed point theorem to the composition of the
maps $\perp : Gr_2(V) \to Gr_2(V)$ that they define
(since $\chi(Gr_2(V)) = 2$).

%If $\ann \kc$ is spanned by $x_1x_3 + x_2x_4$ and
%$x_1x_4 + x_2x_3$, then $\gen{e_1, e_2}$ and $\gen{e_3, e_4}$ are isotropic
%w.r.t. both $q_1$ and $q_2$. But these are not the only
%simultaneously orthogonal 2-planes...
If $\ann \kc$ is spanned by $x_1^2 +x_2^2 + x_3^2 + x_4^2$ and
$\lambda_1x_1^2 + \lambda_2x_2^2 + \lambda_3x_3^2 + \lambda_4x_4^2$
with $\lambda_i$ distinct, then the coordinate planes corresponding to each
partition of $\{1,2,3,4\}$ into two halves gives 6 ordered pairs of
simultaneously orthogonal planes.

\item For $r = 5$ the calculation suggests that there may be some choices
of $q_1, q_2, q_3 \in \sym^2 V^*$ such that
$\kc := \ann \gen{q_1, q_2, q_3} \subseteq \pol^2 V$
does not contain any $A \poltp B$.
\end{itemize}

\begin{ex}
\label{ex:rk5}
Let $\kc := \ann \gen{q_1, q_2, q_3} \subseteq \pol^2 \Q^5$
for $q_i \in \sym^2 \Q^5$ defined by
\begin{align*}
q_1 &= x_1x_4 + x_3x_5 , \\
q_2 &= x_2x_5 + x_3x_4 , \\
q_3 &= x_1^2 + x_1x_2 + x_2^2 + x_3^2 + x_4^2 + x_5^2 .
\end{align*}
Suppose that $A, B$ are orthogonal with respect to $q_1$ and $q_2$.
Let $\pi \subset \Q^5$ be the 3-dimensional subspace $\{ x_4 = x_5 = 0 \}$.
One can check that
\begin{enumerate}
\item If $A$ is contained in $\pi$ then so is $B$,
and vice versa.
\item In fact, if $A$ intersects $\pi$ non-trivially then $B$ is contained
in $\pi$, except if $A\cap \pi$ is spanned by an element
of the form $(a^2, b^2, \pm ab, 0, 0)$.
\item If $A$ and $B$ are both transverse to $\pi$ then they
are equal.
\end{enumerate}
Now if $A$ and $B$ are both contained in $\pi$ then they are definitely not
orthogonal with respect to $q_3$, because its restriction to $\pi$ is
non-degenerate. If $A$ and $B$ are equal they also cannot be $q_3$-orthogonal
because $q_3$ is positive-definite. Finally,
\begin{align*}
q_3\big( (a^2, b^2, \pm ab, 0, 0), (c^2, d^2, \pm cd, 0, 0) \big) 
\; = &\;  a^2c^2 + \half(a^2d^2 + b^2c^2) + b^2d^2 \pm abcd \\
= &\;  \half(a^2 + b^2)(c^2 + d^2) + \half (ac \pm bd)^2 > 0
\end{align*}
for any non-zero $(a,b)$ and $(c,d)$. Hence $\kc$ contains no $A \poltp B$, but
$\kersym{\pol^2 \kc}$ has dimension at least
${13 \choose 2} - {8 \choose 4} = 78 - 70 = 8$.
\end{ex}
We can also show that it happens for $r \geq 11$ that $\kersym{\pol^2 \kc}$ is
non-trivial even though $\kc$ does not even contain any monomials $x \poltp y$.
The image of $\PP(V) \times \PP(V) \to \PP(\pol^2 V),
\; (\gen{x}, \gen{y}) \mapsto \gen{x \poltp y}$
has dimension $2r-2$, so is disjoint from a generic $\kc \subseteq \pol^2 V$ of
dimension ${{r+1} \choose 2} - 2r + 1$.
For $r \geq 11$, this makes $\dim \pol^2 \kc > \dim \pol^4 V$, so $\kc$
automatically has $\kersym{\pol^2 \kc}$ non-trivial.
This gives rise to $(2k{-}1)$-connected $(8k{-}1)$-manifolds which can have a
non-trivial \bmp, even though there are no defined Massey triple products at
all.

\pagebreak[2]
Now let us consider the case when $n$ is odd. In this case, the dimension count
argument is not especially sharp. For $r = 4$, the fact that $Gr_2(V)$ is
4-dimensional leads one to ``expect'' a generic 2-dimensional
$\kc \subseteq \ext^2 V$ to contain (the one-dimensional) $\ext^2 A$ for
some $A \in Gr_2(V)$.  Nevertheless we have the following simple example.

\begin{ex}
\label{ex:rk4}
Let $V = \Q^4$ and
\[ \kc := \gen{v_1 \wedge v_2 + v_3 \wedge v_4, \,
v_1 \wedge v_3 - v_2 \wedge v_4, \, v_1 \wedge v_4 + v_2 \wedge v_3}
\subset \ext^2 V , \]
for a basis $v_1, \ldots, v_4 \in V$.
Then $\kersym{\pol^2 \kc}$ does not contain any ordinary elements: indeed
$\kc$ contains no decomposable elements at all, so for any $x, y \in V$,
$x \wedge y \in \kc$ implies that $x$ and $y$ are linearly dependent.
However, $\dim \pol^2 \kc = 6$ while $\dim \ext^4 V = 1$, so
$\dim \kersym{\pol^2 \kc} = 5$ (it is clear that $\pol^2 \kc$ maps onto
$\ext^4 V$).
\end{ex}

\begin{rmk}
We emphasise that by Theorem \ref{thm:realisation}, every triple 
$(V, \kc, \princ \in \kersym{\pol^2\kc}^\vee)$
covered by Lemma \ref{lem:strict} and Examples \ref{ex:rk5} and \ref{ex:rk4} is
realised as $\left(H^n(M), \ker(\gsym^2 H^n(M) \to H^{2n}(M)), \princ_M\right)$
for some closed \tkc{} \fkm{} $M$.  A corresponding integral statement
follows from Theorem \ref{thm:realiseM}.
\end{rmk}

\subsection{Some remarks on the classification of simply-connected spin 7-manifolds}
\label{subsec:7mfds}

We begin this subsection by determining which linking models
and Pontrjagin classes are realised by simply-connected
spin $7$-manifolds $M$.
Let $p_M \in H^4(M; \Z)$ be the spin characteristic class,
related to the first Pontrjagin class by $2p_M = p_1(M)$.
By \cite[Lemma 2.2(i)]{7class}, $p_M$ is always even.

\begin{prop}
\label{prop:1c7mFA}
Given a linking model $(H^{2*}, b, \iprinc)$ and $p \in 2H^4$,
there is a $1$-connected spin $7$-manifold $M$ with
\[ (H^{2*}(M; \Z), b_M, \iprinc_M, p_M) = 
(H^{2*}, b, \iprinc, p) \]
if and only if $b$ and $\iprinc$ are compatible.  Moreover, 
we may always assume that $TH^3(M; \Z) = 0$.
\end{prop}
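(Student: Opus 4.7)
For necessity, the plan is to exhibit $M$ as the boundary of a spin $8$-manifold $W$ with surjective restriction $H^2(W;\Z) \to H^2(M;\Z)$, so that Corollary~\ref{cor:princ_and_b} applies and delivers the compatibility $\iprinc_M \equiv B_M \pmod{\Z}$ on $\bmsp_2(M;\Z)$. Since $\Omega_7^{\textup{Spin}} = 0$, the manifold $M$ bounds a compact spin $8$-manifold $W_0$, and surgery below the middle dimension renders $W_0$ $1$-connected. The long exact sequence of the pair shows that the cokernel of $H^2(W_0;\Z) \to H^2(M;\Z)$ injects into $H^3(W_0, M;\Z) \cong H_5(W_0;\Z)$ (by Poincar\'e--Lefschetz duality). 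Killing this group by interior spin surgery on $W_0$ yields the desired coboundary $W$.

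For sufficiency, the plan is to apply Theorem~\ref{thm:realiseMM} with $n = 2$ to construct a $1$-connected $7$-manifold $M'$ realising $(F, G, b, \icsq, \iprinc)$, and then to modify the $4$-handle framings used in its construction in order to hit the prescribed $p$. The manifold $W = W(F, G_1, \icsq_1, \lambda_1)$ of Lemma~\ref{lem:realiseW} is built by attaching $4$-handles to a parallelisable (hence spin) thickening of $\vee S^2$. By choosing the attaching framings compatibly, $W$ may be taken to be spin, so that $M' = \partial W$ is spin; its initial spin characteristic class $p_{M'}$ lies in $2G$ by \cite[Lemma 2.2]{7class}, which in particular shows the necessity of the hypothesis $p \in 2G$. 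Re-framing the $4$-handle dual to $y_i \in H^4(W;\Z) = G_1$ by an appropriate spin-preserving element of $\pi_3(\textup{SO}(4))$ shifts $p_{M'} = p_1(W)|_M/2$ by a fixed multiple of $y_i|_M \in G$. The classes $y_i|_M$ generate $G$ (the restriction $G_1 \to G$ is surjective because the intersection form $\lambda_1$ presents the cokernel), so the achievable shifts span all of $2G$, and any prescribed $p \in 2G$ can be realised. Framing changes do not alter the CW structure of $W$, its cup product, or its intersection form, so by Lemma~\ref{lem:coboundary} the remaining invariants $(\icsq_M, b_M, \iprinc_M)$ are undisturbed.

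The final assertion $TH^3(M;\Z) = 0$ is immediate from the statement in Theorem~\ref{thm:realiseMM} that one may arrange $H^*(M;\Z)$ to be concentrated in degrees $0, 2, 5, 7$ when $n = 2$. The principal technical obstacle is the framing-calculus bookkeeping in the sufficiency direction: one must pin down which elements of $\pi_3(\textup{SO}(4))$ preserve the spin structure on $W$, and verify that the induced shifts in $p_1(W)|_M/2$ span exactly the subgroup $2G \subseteq G$. This is a standard but finicky handle-theoretic computation, which in our setting is aided by the surjectivity of $G_1 \to G$ and the explicit form of the $4$-handle attachment in Lemma~\ref{lem:realiseW}.
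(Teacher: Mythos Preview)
Your sufficiency argument and the treatment of $TH^3(M;\Z)=0$ follow the same route as the paper: invoke Theorem~\ref{thm:realiseMM} to realise $(F,G,b,\icsq,\iprinc)$, and then adjust the $4$-handle framings to hit the prescribed $p \in 2G$. The paper simply cites Schmitt \cite[\S 3]{schmitt02} for the framing calculus you sketch, so there is no substantive difference there.

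The necessity argument, however, has a genuine gap. You start from $\Omega_7^{\textup{Spin}}=0$, make the coboundary $W_0$ simply connected, and then assert that the obstruction to surjectivity of $H^2(W_0;\Z)\to H^2(M;\Z)$---which you correctly locate in $H_5(W_0;\Z)$---can be ``killed by interior spin surgery''. But surgery on $5$-spheres in an $8$-manifold lies \emph{above} the middle dimension: to perform it you would need the relevant classes in $H_5(W_0;\Z)$ to be spherical, which requires $W_0$ to be $4$-connected, and achieving that would annihilate $H^2(W_0;\Z)$ entirely. Nor is there an evident normal map along which to run surgery below the middle dimension, since you have not equipped $W_0$ with a reference map to a space carrying the right $H^2$.

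The paper's approach is to recognise that ``$M$ bounds spin over $H^2(M;\Z)$'' is exactly the vanishing of $[M]$ in the twisted bordism group $\Omega_7^{\textup{Spin}}\bigl(K(H^2(M;\Z),2)\bigr)$, and to cite \cite[Proposition~4.2]{crowley14a} for the computation that this group is zero. This is not a formality: in the Atiyah--Hirzebruch spectral sequence the only $E_2$-term in total degree $7$ is $H_6\bigl(K(\Z^r,2);\,\Omega_1^{\textup{Spin}}\bigr)\cong(\Z/2)^{\binom{r+2}{3}}$, and one must verify it dies under the differentials. Your surgery sketch is in effect an attempt to reprove this bordism vanishing by hand, but the step as written does not go through.
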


\begin{proof}
Given $M$, we first show that the linking form $b_M$
and \bmp{} $\princ_M$ are compatible.
Let $K(H^2(M; \Z), 2)$ be the indicated Eilenberg-MacLane space.
By \mbox{\cite[Proposition 4.2]{crowley14a}} the spin bordism group 
$\Omega_7^{spin}\left(K(H^2(M; \Z), 2)\right)$ vanishes.
Hence $M$ bounds over $H^2(M;\Z)$ and by Corollary \ref{cor:princ_and_b}
$b_M$ and $\iprinc_M$ are compatible.

The existence statement for any linking model $(H^{2*}, b, \iprinc)$
with $b$ and $\iprinc$ compatible
follows immediately from Theorem \ref{thm:realiseMM}.
Hence it remains to determine the possible values of $p_M$.
By \mbox{\cite[\S 3]{schmitt02}} the manifolds $M$ of
Theorem \ref{thm:realiseMM} realising a given $(H^{2*}, \iprinc)$ can be
assumed spin with $p_M$ any element of $2H^4(M; \Z) \cong 2H^4$.
\end{proof}

Corollary \ref{cor:1c7m_up_to_finite} implies in particular that the
invariants in Proposition \ref{prop:1c7mFA} determine the
diffeomorphism type of $M$ up to a finite number of possibilities.
We conclude with a discussion of how to pin down the
remaining finite ambiguity.

The further invariants needed include the quadratic linking family
and generalised Eells-Kuiper invariant from the 2-connected classification
\cite{7class}.
When $M$ bounds over its normal 2-type in the sense of Kreck \cite{kreck99}
(in particular, whenever $\pi_2(M)$ is torsion-free), one can adapt the
coboundary description of the Bianchi-Massey tensor from Lemma
\ref{lem:coboundary} to define mod $m$ 
extensions of $\iprinc$ for any integer~$m$. 
One should further expect to be able to use such coboundaries to define some
further generalised version of the generalised Kreck-Stolz invariants of
Hepworth \cite{hepworth05}, which are based on \cite[Theorem~6]{kreck99}.
Assuming that $\pi_2(M)$ is torsion-free, so that
$H^2(M;\Z/m) \cong H^2(M;\Z) \otimes \Z/m$, may help evade some subtleties.
(Note also that Proposition \ref{prop:1c7mFA} lets us realise every possible
integral cohomology ring compatible with $\pi_2 M$ being
torsion-free.)
%\enlargethispage{\baselineskip}

\begin{conj}[\cf {\cite[\S 2.c]{crowley13}}]
Simply-connected spin 7-manifolds $M$ with torsion-free $\pi_2(M)$ are
classified up to spin diffeomorphism by the cohomology ring $H^*(M; \Z)$, the
spin characteristic class $p_M$, the torsion linking form $b_M$ on $TH^4(M;\Z)$
and its family of quadratic refinements, the generalised Eells-Kuiper invariant
from \cite{7class}, the \bmp{} $\iprinc_M$ and its
mod $m$ extensions, and some variation of the generalised Kreck-Stolz
invariants from~\cite{hepworth05}. 
\end{conj}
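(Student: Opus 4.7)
The plan is to apply Kreck's modified surgery theory \cite{kreck99} to the normal 2-type of $M$, extending the approach used in \cite{7class,hepworth05} for the 2-connected case. When $\pi_2(M)$ is torsion-free, the normal 2-type of a simply-connected spin 7-manifold is a fibration over $K(H^2(M;\Z),2) \times B\textup{Spin}$, and by \cite[Proposition 4.2]{crowley14a} the relevant spin bordism group $\Omega_7^{spin}$ of this normal 2-type vanishes. Consequently any two such manifolds whose invariants agree admit a normal 2-type bordism $W^8$ between them.

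Corollary \ref{cor:1c7m_up_to_finite} already classifies such manifolds up to finite ambiguity in terms of $H^*(M;\Z)$, $p_M$, and $\iprinc_M$; the remaining task is to isolate the secondary obstructions that distinguish diffeomorphism classes within a given finite family. By \cite[Theorem 6]{kreck99}, the obstruction to surgering $W^8$ into an $h$-cobordism lies in a monoid $l_8$ which, in our simply-connected spin setting, is detected by suitable characteristic numbers of $W$ modulo indeterminacy coming from $\Omega_8^{spin}$ of the normal 2-type. The heart of the proof would match each listed invariant to a piece of this obstruction.

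More concretely, the generalised Eells-Kuiper invariant from \cite{7class}, together with the family of quadratic refinements of $b_M$, should detect the spin-characteristic-class and linking contributions modulo the indeterminacy, following the pattern established in the 2-connected case. The mod $m$ extensions of $\iprinc_M$ (indicated in the paragraph preceding the conjecture) would be defined by adapting Lemma \ref{lem:coboundary} and Corollary \ref{cor:princ_and_b} to $\Z/m$-coefficients, using that any $M$ with torsion-free $\pi_2$ bounds over its normal 2-type; they would capture intersection-form data of $W$ at the prime level that is not visible integrally. Finally, a Kreck-Stolz-style invariant in the spirit of Hepworth \cite{hepworth05} should absorb the residual characteristic-number obstructions not captured by the previous data.

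The main obstacles I anticipate are threefold. First, pinning down the precise form of the generalised Kreck-Stolz invariants: Hepworth's construction is tailored to the 2-connected setting, and the extension to non-trivial $\pi_2$ requires tracking cup products of degree-2 classes along the bordism $W$ and proving well-definedness modulo $\Omega_8^{spin}$ of the normal 2-type. Second, an explicit identification of that spin bordism group, or at least enough of its structure to pin down which linear combinations of characteristic numbers on $W^8$ descend to genuine invariants of $M$. Third, establishing coboundary-independence of the mod $m$ extensions of $\iprinc$: by Remark \ref{rmk:link} the integral \bmp{} itself is only forced to take integer values in the presence of a coboundary over $H^n(M;\Z)$, so the analogous mod $m$ statement is not automatic and would require additional control of low-dimensional spin bordism over the normal 2-type at each prime dividing $m$.
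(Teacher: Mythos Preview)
The statement you are attempting to prove is a \emph{conjecture} in the paper, not a theorem; the paper offers no proof. What precedes it is an informal discussion of what further invariants ``should'' be needed beyond Corollary~\ref{cor:1c7m_up_to_finite}, together with some pointers (the vanishing of $\Omega_7^{spin}(K(H^2(M;\Z),2))$, the coboundary description of $\iprinc$, the 2-connected classification in \cite{7class}, Hepworth's invariants), and then the conjecture is stated as an open problem.

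Your proposal accurately reconstructs that informal discussion and the expected strategy via Kreck's modified surgery. In that sense it is entirely in line with the paper. But it is not a proof, and you are explicit about this: the three ``obstacles'' you list --- defining the generalised Kreck--Stolz invariants beyond the 2-connected case, computing enough of $\Omega_8^{spin}$ of the normal 2-type to pin down the indeterminacy, and establishing well-definedness of the mod $m$ extensions of $\iprinc$ --- are precisely the missing ingredients that make this a conjecture rather than a theorem. The paper itself flags the same issues (the phrases ``one can adapt'', ``one should further expect'', ``may help evade some subtleties''). So there is nothing to compare your argument against: the paper and your proposal agree on the strategy and agree that the hard steps remain to be carried out.
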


%(end)

\bibliographystyle{amsinitial}
\bibliography{g2geom}

\end{document}